\numberwithin{equation}{section}
\newcommand{\R}{\mathbb{R}}
\newcommand{\C}{\mathcal{C}}
\newcommand{\n}{\mathbf{n}}
\newcommand{\CL}{\mathcal{L}}
\newcommand{\CB}{\mathcal{B}}
\def\Xint#1{\mathchoice
    {\XXint\displaystyle\textstyle{#1}}%
     {\XXint\textstyle\scriptstyle{#1}}%
     {\XXint\scriptstyle\scriptscriptstyle{#1}}%
     {\XXint\scriptstyle\scriptscriptstyle{#1}}%
	\!\int}
\def\XXint#1#2#3{{\setbox0=\hbox{$#1{#2#3}{\int}$}
	\vcenter{\hbox{$#2#3$}}\kern-.5\wd0}}
\newtheorem{theorem}{Theorem}[section]
\newtheorem{thm}{Theorem}[section]
\newtheorem{lemma}[thm]{Lemma}
\newtheorem{prop}[thm]{Proposition}
\newtheorem{cor}[thm]{Corollary}
\newtheorem{remark}[thm]{Remark}
\title{Nonstationary Venttsel problems with discontinuous data}
\author{D. E. Apushkinskaya, A. I. Nazarov, D. K. Palagachev,\\ and L. G. Softova }
\date{\today}
\begin{document}

\maketitle

\hfill {\it Dedicated to  Professor Vsevolod A. Solonnikov} 

\hfill {\it on the occasion on his 90th anniversary, with admiration}

\section{Introduction}\label{sec1}

Let $\mathcal{L}$ be a second-order linear elliptic operator defined over 
 a bounded and sufficiently smooth domain  $\Omega\subset \R^n$. 
In his pioneer work \cite{V59},  A.D.~Venttsel found  the most general boundary conditions which restrict $\mathcal{L}$  to an  infinitesimal generator of a Markov process in $\Omega$.   These conditions are given  in terms of a second-order integro-differential  operator $\mathcal{V}=\mathcal{B}+\mathcal{I}$, where the differential (\textit{local}) term $\mathcal{B}$ describes diffusion, drift, absorption, reflection and viscosity along the boundary, while the integral (\textit{non-local}) operator $\mathcal{I}$ corresponds to jumps phenomena on the boundary. A boundary value problem for the operator $\mathcal{L}$ with boundary conditions given by $\mathcal{V}$ takes the name of the \textit{Venttsel boundary value problem.} 

The Venttsel problems, even if originally arising in the theory of stochastic processes,  appear in various branches of technology, industry, engineering, financial mathematics, etc. (see  for instance the references in \cite{AN00}, \cite{CFGGGOR09}, and \cite{ANPS21}).   

In the last decades, much attention has been paid to regularity and solvability issues regarding  boundary value problems for elliptic and parabolic operators with discontinuous coefficients. The essential base for all this research was given by the novel paper \cite{CFL93} where the Calder\'on--Zygmund theory of second-order elliptic operators with \textit{continuous} principal coefficients has been extended to the \textit{discontinuous} situation of operators with principal coefficients having \textit{vanishing mean oscillation.} Regularity and strong solvability theories have been developed for the Dirichlet, Neumann, and Robin boundary value problems both for elliptic and parabolic operators (see \cite{BC93,MPS00,Kr08,DK11} for instance). 

In \cite{ANPS21} regularity and strong solvability in the framework of Sobolev spaces have been obtained for linear and quasilinear elliptic operators subject to  \textit{local} Venttsel boundary conditions in the case when both the interior and the boundary differential operators have \textit{VMO} principal coefficients. It is also important to notice that the assumptions on the lower-order coefficients in \cite{ANPS21} are optimal in terms of the Lebesgue and Orlicz spaces.

In the present article, we extend these  results to parabolic discontinuous Venttsel problems in composite Sobolev spaces, generalizing this way the results about parabolic Venttsel problems for operators with continuous principal coefficients proven in \cite{AN95}.
 The differential operator $\mathcal{L}$ defined in a cylindrical domain and the boundary Venttsel operator $\mathcal{B}$ are supposed to be uniformly parabolic second-order operators both with principal coefficients that are \textit{only measurable} with respect to the time-variable $t$ and have \textit{vanishing mean oscillation} with respect to the spatial variables $x$. In particular, these include operators with principal coefficients depending \textit{only} on the time-variable $t$ and the fact that these may be \textit{merely} measurable in $t$ is of great importance in the theory of stochastic processes (see \cite{Kr07a,Kr07b,Kr08}). Regarding the lower-order coefficients of the operators considered, these are taken in suitable Lebesgue/Orlicz spaces which turn out to be optimal (for possible generalizations, see Remark~\ref{rem-Krylov}).

For the linear Venttsel problem considered we prove coercive \textit{a~priori} estimates and strong solvability in the composite parabolic Sobolev spaces $W^{2,1}_p(\mathcal{Q}_T)\cap W^{2,1}_q(\Gamma_T)$ for all admissible values of $p$ and $q$. Our proofs rely on the techniques of \cite{AN95} combined with the ideas used in \cite{ANPS21} and the results of \cite{DK11} on Cauchy--Dirichlet problems for linear parabolic equations with partially \textit{VMO} coefficients. Precisely, in order to derive the \textit{a~priori} estimate we start with a bound over the lateral boundary of the cylinder for the solution of an ``autonomous'' Venttsel problem that does not contain the directional derivative $\partial_{\mathbf{n}}u$ of the solution along the normal to the boundary. Further, fine interpolations between Sobolev spaces allow to estimate $\partial_{\mathbf{n}}u$ by means of itself and this leads, through the local estimates from \cite{DK11}, to a coercive estimate for the solution over the lateral boundary. Once having that, the solution of the Venttsel problem can be viewed as a solution to Cauchy--Dirichlet problem and the bounds from \cite{DK11} combined with fine trace theorems and perturbation techniques lead to the coercive estimate for the solution to the Venttsel problem. With this estimate in hand, the strong solvability in Sobolev spaces is reached by the method of continuity.

In the second part of the paper, we obtain strong solvability of \textit{quasilinear} parabolic equations subject to \textit{quasilinear} Venttsel boundary conditions. The principal parts  of the both interior and the boundary operators are allowed to be \textit{only measurable} in time and \textit{VMO} in the space variables, these depend nonlinearly on the solution but, due to technical restrictions, are \textit{independent} of its spatial gradient. The lower-order terms instead allow unbounded $(x,t)$-singularities controlled in Lebesgue or Orlicz norms and support the optimal (Bernstein-type)
quadratic gradient growth. The strong solvability of the quasilinear Venttsel problem is reached by means of the Leray--Schauder fixed point theorem, and to apply it we need a series of \textit{a~priori} estimates for all possible solutions of a family of Venttsel problems. The bounds for the $L^\infty$ and the H\"older norms of the solutions follow easily from \cite{AN95a,AN95,AN96}, and the great effort here is concentrated on the estimates for the $L^{2(n+2)}$-norm of the \textit{spatial} gradient inside the cylinder and the $L^{2(n+1)}$-norm of the \textit{tangential} gradient over the lateral boundary. In this step, we employ a homotopy-type argument due to Amann and Crandall (\cite{AC78}) suitably adapted to the Venttsel boundary conditions (see \cite{Sof03} for the cases of Dirichlet or Robin problems).

Some open problems and directions for further research are provided at the end of the paper.

\section{Auxiliary results}\label{sec2}

\subsection{Basic notation}
Throughout the paper we adopt the following notations:

\noindent
$x=(x',x_n)=(x_1,\dots,x_{n-1},x_n)\in \R^n$, $|x|$ is the Euclidean norm of $x$.

\noindent
$(x,t)\in \R^{n+1}$.

\noindent
$\R^n_+=\{x\in \R^n\colon\  x_n>0\}$.

\noindent
$\Omega$ is a bounded domain in $\R^n$, $n\ge2$, with closure $\overline{\Omega}$ and  boundary $\partial\Omega$.

\noindent
$\n(x)=\big(\n_1(x),\ldots,\n_n(x)\big)$ is the unit vector of the outward normal to $\partial \Omega$ at the point $x$.

\noindent
$\mathcal{Q}_T=\Omega\times(0,T)$ is a cylinder in $\R^{n+1}$; $\Gamma_T=\partial\Omega\times(0,T)$ is its lateral boundary and $\partial_P\mathcal{Q}_T=\Gamma_T\cup \big\{
\overline{\Omega}\times\{0\}\big\}$ is its parabolic boundary.

\noindent
$B_\rho(x^0)$ is the open ball in $\R^n$ centered at $x^0$ and of radius $\rho$,  $B_\rho=B_\rho(0)$; $B'_\rho=B_\rho\cap \{x_n=0\}$.

\noindent
$Q_\rho(x^0;t^0)=B_\rho(x^0)\times (t^0-\rho^2,t^0)$ is the standard parabolic cylinder.

For a measurable set $E$, $|E|$ stands for its Lebesgue measure in the corresponding dimension. Further, $L_p(E)$ is the standard Lebesgue space with the norm $\|\cdot\|_{p,E}$, while
$$
\Xint -\limits_E f=\frac 1{|E|}\int\limits_E f
$$
is the mean value of the function $f$ over $E$. In the sequel, we use the notation
$f_{\pm}=\max\{\pm f;0\}$.

The  indices $i,j$ run from $1$ to $n$ 
and the standard summation convention on the  repeated indices is adopted.

\noindent
$D_i$ stands for the operator of  differentiation with respect
to  $x_i$.

\noindent
$Du=( D'u,D_n u) =(D_1u,\ldots,D_{n-1}u,D_nu)$ is the spatial gradient of $u$.

\noindent
$\partial_t$ is the operator of  differentiation with respect to $t$.

\noindent 
$d_i$ denotes the tangential differential operator on $\partial \Omega$ given by
$$
d_i=D_i-\n_i \n_j D_j.
$$

\noindent
$du=(d_1u,\ldots,d_nu)$ is the tangential gradient of $u$ on $\partial \Omega$ while $\partial_{\mathbf{n}}u$ stands for the normal derivative of $u$.


$W^{2,1}_p(\mathcal{Q}_T)$ is the parabolic Sobolev space equipped with the norm
$$
\|u\|_{W^{2,1}_p(\mathcal{Q}_T)}=\|\partial_tu\|_{p,\mathcal{Q}_T}+\|D(Du)\|_{p,\mathcal{Q}_T}+\|u\|_{p,\mathcal{Q}_T}.
$$
In a similar way, we introduce the spaces $W^{2,1}_q(\Gamma_T)$.
\smallskip

We set further $V_{p,q}(\mathcal{Q}_T)=W^{2,1}_p(\mathcal{Q}_T)\cap W^{2,1}_q(\Gamma_T)$ for the collection of all $W^{2,1}_p(\mathcal{Q}_T)$ functions that have traces in 
$W^{2,1}_q(\Gamma_T)$, with the norm 
$$
\|u\|_{V_{p,q}(\mathcal{Q}_T)}=\|u\|_{W^{2,1}_p(\mathcal{Q}_T)}+\|u\|_{W^{2,1}_q(\Gamma_T)}.
$$

$\C^{1,1}$ is the space of functions with Lipschitz continuous first-order derivatives. Further, 
$\partial\Omega\in  \C^{1,1}$ means that there exists an $\mathfrak{R}_0>0$ such that for each $x^0 \in \partial\Omega$ the set $\partial\Omega \cap B_{\mathfrak{R}_0}(x^0)$, in a suitable Cartesian coordinate system, is the
graph $y_n=f(y')$ of a function $f\in \C^{1,1}$.
Saying that a given constant depends on 
\textit{the properties of $\partial\Omega$} means dependence on $\mathfrak{R}_0$, on the $\C^{1,1}$-norms of the 
diffeomorphisms that flatten 
locally $\partial\Omega$  and on the area of $\partial\Omega$.

We use the letters $C$ and $N$ (with or without indices) to denote various constants. To indicate that $C$ depends on some parameters, we list these in parentheses: $C(\dots)$. 

Finally, we set $\frac{0}{0}=0$, if such an uncertainty occurs.

\subsection{Partial VMO functions}

Following \cite{Kr08}, for a locally integrable function $a\colon\ \R^{n+1}\to\R$, we define the \textit{mean oscillation of $a$ with respect to $x$} over the cylinder $Q_\rho(x,t)$ by
\begin{equation}\label{osc-x}
\mathrm{osc}_x\big(a,Q_\rho(x,t)\big)=
\Xint {\ \ -}\limits_{t-\rho^2}^t 
\Xint {\ \ \ -}\limits_{B_\rho(x)}
\Xint {\ \ \ -}\limits_{B_\rho(x)} \big|a(y;\tau)-a(z,\tau)\big|\,dydzd\tau
\end{equation}
and set
\begin{equation}\label{VMO-mod}
a^{\#(x)}_R=\sup_{(x,t)\in\R^{n+1}} \sup_{\rho\leq R}\, \mathrm{osc}_x\big(a,Q_\rho(x,t)\big).
\end{equation}

The function $a$ is said to be of \textit{bounded mean oscillation with respect to $x$, $a\in \textit{BMO}_x$,} if $a^{\#(x)}_R$ is bounded for all $R>0$, while 
$a$ is of \textit{vanishing mean oscillation with respect to $x$, $a\in \textit{VMO}_x$,} when $\lim_{R\to0}a^{\#(x)}_R=0$ and $a^{\#(x)}_R$ is referred as $\textit{VMO}_x$-modulus of $a$.

In case of bounded cylinder $\mathcal{Q}_T$,
 the spaces $\textit{BMO}_x(\mathcal{Q}_T)$  and $\textit{VMO}_x(\mathcal{Q}_T)$   are defined in a similar manner by replacing $Q_\rho$ above by the respective intersections with $\mathcal{Q}_T$. Moreover, in case of smooth $\partial\Omega$ the spaces $\textit{BMO}_x(\Gamma_T)$  and $\textit{VMO}_x(\Gamma_T)$ are defined naturally  by considering surface integral oscillations over $Q_\rho(x,t)\cap\Gamma_T$ with $(x,t)\in\Gamma_T$.

\medskip 

Some remarks regarding the above definitions are in order.

Setting 
\begin{equation}\label{osc}
\mathrm{osc}\big(a,Q_\rho(x,t)\big)=
\Xint {\quad \ -}\limits_{Q_\rho(x,t)} 
\Xint {\quad \ -}\limits_{Q_\rho(x,t)} \big|a(y;\tau)-a(z,\theta)\big|\,dyd\tau\, dzd\theta,
\end{equation}
the John--Nirenberg space $\textit{BMO}$ (\cite{JN61})
and the Sarason class $\textit{VMO}$ (\cite{Sar75}) are defined as the collections of those functions $a\colon\ \R^{n+1}\to \R$ for which the quantity
$$
a^{\#}_R=\sup_{(x,t)\in\R^{n+1}} \sup_{\rho\leq R}\, \mathrm{osc}\big(a,Q_\rho(x,t)\big)
$$
is, respectively, bounded or vanishing as $R\to0$.

Direct computations show that
$$
\Xint {\quad \ -}\limits_{Q_\rho(x,t)} 
\big| a(y;\tau)-a_{Q_\rho(x,t)}\big| dyd\tau \leq 
\mathrm{osc}\big(a,Q_\rho(x,t)\big),
$$
$$
\Xint {\quad \ -}\limits_{Q_\rho(x,t)} 
\big| a(y;\tau)-a_{B_\rho(x)}(\tau)\big| dyd\tau \leq 
\mathrm{osc}_x\big(a,Q_\rho(x,t)\big),
$$
where $a_{Q_\rho(x,t)}=\!\!\Xint {\quad \ \ -}\limits_{Q_\rho(x,t)}  a(y;\tau)dyd\tau$ and
$a_{B_\rho(x)}(\tau)=\!\!\Xint {\quad \,  -}\limits_{B_\rho(x)} a(y;\tau)dy$.
\medskip

This way, $a\in\textit{VMO}$ asks a sort of ``integral continuity'' of $a$, measured in $\textit{VMO}$, with respect to \textit{all variables} $(x,t)$, while $a\in\textit{VMO}_x$ requires that ``continuity'' only with respect to the spatial variables $x$, allowing $a$ to be \textit{merely measurable in $t$.} This is clearly seen in case $a(x,t)$ does not depend on $x$ when $a(t)\in\textit{VMO}_x$ automatically, while $a\in\textit{VMO}$ is not guaranteed generally.

\subsection{An extension result}

The next statement is a generalization of Theorem~6.1 in \cite{AN95}.

\begin{thm}\label{extension-theorem}
Let $\partial\Omega \in \C^{1,1}$ and let the exponents $p$ and $q$ satisfy
$$
1\leq p \leq q\,\frac{n+2}{n+1},\qquad q>1.
$$
Then there exists an extension operator
$$
\Pi\colon\ W^{2,1}_q(\Gamma_T) \rightarrow W^{2,1}_p(\mathcal{Q}_T)
$$
such that
\begin{equation} \label{6.1-AN95}
\|\Pi u\|_{W^{2,1}_p(\mathcal{Q}_T)} \le N_0 \|u\|_{W^{2,1}_q(\Gamma_T)},
\end{equation}
where $N_0$ depends only on $p,q$ and the properties of $\partial\Omega$.
\end{thm}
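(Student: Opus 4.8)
The plan is to construct the extension operator $\Pi$ by first flattening the boundary locally and reducing to the model case, then building an explicit extension on a half-space model that respects the parabolic scaling, and finally patching together with a partition of unity. The essential point is that the exponent condition $p \le q\,\frac{n+2}{n+1}$ is exactly the threshold dictated by parabolic trace/embedding theory: the lateral boundary $\Gamma_T$ has ``parabolic dimension'' $n+1$ (that is, $(n-1)+2$ for the spatial surface plus the time direction, matched against the interior parabolic dimension $n+2$), so a function with $q$ integrable second spatial derivatives on $\Gamma_T$ can be extended to have $p$ integrable second derivatives in $\mathcal{Q}_T$ precisely when $p$ does not exceed this ratio.

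First I would localize. Using $\partial\Omega \in \C^{1,1}$, I cover $\partial\Omega$ by finitely many balls $B_{\mathfrak{R}_0}(x^k)$ in each of which $\partial\Omega$ is the graph $y_n = f(y')$ of a $\C^{1,1}$ function, and I flatten each piece via the diffeomorphism $(y',y_n)\mapsto(y', y_n - f(y'))$. Because $f\in\C^{1,1}$, this map has Lipschitz first derivatives, so it preserves membership in $W^{2,1}_p$ and $W^{2,1}_q$ with norm control depending only on the properties of $\partial\Omega$; the time variable is untouched, so the parabolic structure is preserved. This reduces matters to extending a function given on the flat piece $B'_{\rho}\times(0,T) \subset \{x_n=0\}\times(0,T)$ into the half-cylinder $(B_\rho\cap\R^n_+)\times(0,T)$.

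Next I would build the model extension. On the flattened piece, with $u=u(x',t)$ prescribed on $\{x_n=0\}$, I set $\Pi u(x',x_n,t) = \int u(x'-x_n z', t)\,\varphi(z')\,dz'$ for a fixed smooth compactly supported kernel $\varphi$ with $\int\varphi=1$, i.e. a Poisson-type averaging that smooths in the $x'$-directions at scale $x_n$. Differentiating, one sees $D'(Du)$ and $\partial_t(\Pi u)$ are convolutions of the surface second derivatives against dilated kernels, while two $x_n$-derivatives land on $D^2_{x'}u$ through the $x_n$-scaling of the kernel. The $L_p(\mathcal{Q}_T)$ norm of these is then estimated by integrating the $x_n$-variable out: Minkowski's inequality in integral form together with Young's convolution inequality reduces each term to $\|u\|_{W^{2,1}_q(\Gamma_T)}$, and the $x_n$-integration contributes a finite factor \emph{exactly} when $p\le q\,\frac{n+2}{n+1}$, which is where the dimensional bookkeeping of the exponents enters. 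The mixed Lebesgue bookkeeping (spatial integration at scale $x_n$ versus integration in $x_n$) is the step where the admissible exponent range is forced.

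The main obstacle I anticipate is precisely this exponent accounting: keeping track of the parabolic scaling so that the $x_n$-integral of the rescaled $L_q$-norms converges and yields an $L_p$-bound with the sharp constraint. One must handle the second-order derivatives carefully, since each $x_n$-derivative of the kernel costs a factor $x_n^{-1}$ and shifts integrability, and the condition $q>1$ is needed so that the relevant maximal/convolution estimates apply. Once the model estimate \eqref{6.1-AN95} is established on each flattened piece, I would glue the local extensions with a subordinate partition of unity $\{\zeta_k\}$ and add a harmless interior cutoff extension away from $\partial\Omega$; since multiplication by the $\C^\infty$ functions $\zeta_k$ and the inverse flattening diffeomorphisms only perturb lower-order terms and introduce constants depending on $\partial\Omega$, $p$, and $q$, the global bound \eqref{6.1-AN95} follows, completing the proof.
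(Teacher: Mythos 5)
Your localization--flattening--gluing skeleton matches the paper's (Steps 2 and 3 of its proof), but the heart of the matter is the model extension on the flat piece, and there your construction has two genuine gaps. First, your kernel $\Pi u(x',x_n,t)=\int u(x'-x_nz',t)\varphi(z')\,dz'$ mollifies only in $x'$ and leaves $t$ untouched. Passing from $W^{2,1}_q(\Gamma_T)$ to $W^{2,1}_p(\mathcal{Q}_T)$ with $p>q$ requires a gain of integrability \emph{jointly} in $(x',t)$, and a convolution acting only in $x'$ cannot improve integrability in $t$: for $u(x',t)=g(x')h(t)$ with $h\in L^q\setminus L^p$ the slice norms $\|\partial_t\Pi u(\cdot,x_n,\cdot)\|_{L^p}$ are infinite for every $x_n$, so $\partial_t\Pi u\notin L^p(\mathcal{Q}_T)$ no matter how the $x_n$-integration is arranged. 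The kernel must also average in time at the parabolic scale $x_n^2$; but then two $x_n$-derivatives of $u(x'-x_nz',t-x_n^2s)$ produce $\partial_t^2u$-terms, which are not controlled by the $W^{2,1}_q$-norm and must be removed by integration by parts in $s$ against the kernel --- none of which your ``differentiating, one sees'' step accounts for.

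Second, and more fundamentally, your exponent bookkeeping is wrong at the endpoint. With the correctly scaled parabolic kernel, Young's inequality gives $\|K_{x_n}\|_{L^r}\sim x_n^{-(n+1)(1/q-1/p)}$ (the lateral boundary has parabolic dimension $n+1$), so the $x_n$-integral of the $p$-th powers converges if and only if $(n+1)(1/q-1/p)\,p<1$, i.e.\ if and only if $p<q\,\frac{n+2}{n+1}$ \emph{strictly}; at $p=q\,\frac{n+2}{n+1}$ one gets $\int_0^R dx_n/x_n=\infty$. So the Minkowski--Young route proves only the subcritical range and misses exactly the critical case --- which is the only case that matters: the paper begins by reducing to $p=q\,\frac{n+2}{n+1}$, and equality is permitted in \eqref{pq-cond} where the theorem is applied. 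The critical case cannot be recovered from a convergent power of $x_n$; the finiteness must come from the dyadic summability built into a Besov norm (the integral $\int_0^R\omega_p(u,s)^p\,ds/s$ of a modulus of smoothness). This is precisely why the paper argues through the embedding $W^{2,1}_q(\R^{n-1}\times(0,T))\hookrightarrow \mathbf{B}^{2-\frac1p,\,1-\frac1{2p}}_{p,p}$ and the exact trace-space extension $\mathbf{B}^{2-\frac1p,\,1-\frac1{2p}}_{p,p}\to W^{2,1}_p$ (Theorems 18.12 and 18.13 of \cite{BIN75}) rather than by a direct kernel estimate. To repair your proof you would either have to invoke those two theorems (at which point it becomes the paper's proof) or reprove the critical Besov embedding by hand, which is a substantially harder task than the convolution estimates you describe.
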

\begin{proof} 
It is easy to see that it suffices to prove the theorem when $p=q\,\frac{n+2}{n+1}$.

\textit{Step 1.} We start with a  procedure constructing an extension operator from a flat boundary surface to a boundary strip that acts continuously from the space $\hat{W}\vphantom{W}^{2,1}_q(B_R'\times (0,T))$ into the space $W^{2,1}_p(B_R'\times (0,T) \times (0,R))$.
Here $\hat{W}\vphantom{W}^{2,1}_q(B_R'\times (0,T))$ stands for the closure of the set of smooth functions vanishing in a neighborhood of lateral boundary with respect  to the norm in $W^{2,1}_q$, and we assume that the function $u$ is extended as zero to the whole strip $\R^{n-1}\times (0,T)$.

Now the successive application of some statements from \cite{BIN75} yields:

1) embedding \cite[Theorem 18.12]{BIN75}
$$
W^{2,1}_q(\R^{n-1}\times (0,T)) \rightarrow \mathbf{B}^{2-\frac{1}p,1-\frac{1}{2p}}_{p,p}(\R^{n-1}\times (0,T));
$$

2) extension \cite[Theorem 18.13]{BIN75}
$$
\mathbf{B}^{2-\frac{1}p,1-\frac{1}{2p}}_{p,p}(\R^{n-1}\times (0,T)) \rightarrow W^{2,1}_p (\R^n\times (0,T))
$$
(here the notation $\mathbf{B}$ of the Besov spaces corresponds to the book \cite{BIN75}).

Note that multiplying by a suitable cut-off function, 
the following properties can be ensured:
\begin{enumerate}
\item the extended function is equal to $0$ for $|x_n|>R/2$;
\item if the initial function is equal to $0$ for $|x'|>R/2$, then the extended one is equal to $0$ for $|x'|>3R/4$.
\end{enumerate}
Moreover, the norm of the extension operator is bounded in terms of ${n}$, $q$, $p$, and $R$ but not on $T$. 

\medskip

\textit{Step 2.} Since $\partial\Omega\in  \C^{1,1}$, for each $x^0 \in \partial\Omega$ there exists a Cartesian coordinate system with origin at $x^0$ such that the intersection of $\partial\Omega$ with the neighborhood 
$$
U_{\mathfrak{R}_0}=\big\{(x',x_n)\colon\ |x'|<{\mathfrak{R}_0}, |x_n|<{\mathfrak{R}_0}\big\}
$$
is the graph $x_n=f(x')$ of a function $f\in \C^{1,1}$.

The change of variables $y'=x'$, $y_n=x_n-f(x')$ then maps $\partial\Omega \cap U_{\mathfrak{R}_0}$ into the ball of radius ${\mathfrak{R}_0}$ lying on the hyperplane $\{y_n=0\}$.

This coordinate transformation induces the ``transplantation'' operator $u(x,t) \rightarrow u(y;t)$ acting from $W^{2,1}_q (\Gamma_T \cap (U_{\mathfrak{R}_0}\times(0,T))$ to $W^{2,1}_q (B_{\mathfrak{R}_0}'\times(0,T))$ continuously.
\medskip

\textit{Step 3.} Using the results of Steps 1 and 2 above, one can construct local extension operators that map $W^{2,1}_q (\Gamma_T)$-functions with sufficiently small $x$-support  into $W^{2,1}_p(\R^n\times(0,T))$-functions vanishing for $|x_n|>{\mathfrak{R}_0}/2$, $|x'|>3{\mathfrak{R}_0}/4$. Finally, the desired operator $\Pi$ can be glued  from the local operators via the appropriate partition of unity.
\end{proof}

\subsection{Coercive estimates for Cauchy--Dirichlet problems with discontinuous coefficients}

\begin{lemma}\label{CD-problem}
Consider the linear parabolic operator
$$
\CL_0:= \partial_t-a^{ij}(x,t)D_iD_j 
$$
and suppose
\begin{align*}
& a^{ij}(x,t)=a^{ji}(x,t)\quad \text{for a.a.}\ (x,t)\in \mathcal{Q}_T,\\
& \nu|\xi|^2\leq a^{ij}\xi_i\xi_j\leq \nu^{-1}|\xi|^2\quad \forall \xi\in\R^n,\quad \nu=\mathrm{const}>0,\\
& a^{ij}\in \textit{VMO}_x(\mathcal{Q}_T).
\end{align*}

Let $1<p<+\infty$, $\partial\Omega\in \C^{1,1}$ and suppose $u\in W^{2,1}_p(\mathcal{Q}_T)$ is a strong solution of the equation $\CL_0u=f\in L^p(\mathcal{Q}_T)$ such that $u=0$ on $\partial_P\mathcal{Q}_T$.

Then there exists a constant $C$ depending on $n$, $p$, $\nu$, $T$, the properties of $\partial\Omega$, and on the $\textit{VMO}_x$-moduli of the coefficients $a^{ij}$, such that
\begin{equation}\label{CD1}
\|u\|_{W^{2,1}_p(\mathcal{Q}_T)} \leq C\Big( \|f\|_{p,\mathcal{Q}_T}+
\|u\|_{p,\mathcal{Q}_T}\Big).
\end{equation}
\end{lemma}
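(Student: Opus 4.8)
The plan is to derive the global coercive estimate \eqref{CD1} from the local interior and boundary estimates for parabolic operators with $\textit{VMO}_x$ principal coefficients established in \cite{DK11}, by a localization--flattening--patching scheme. I would begin by covering $\overline{\Omega}$ with finitely many balls $\{B_r(x^k)\}$, split into \emph{interior} balls (with $B_r(x^k)\Subset\Omega$) and \emph{boundary} balls (centered at some $x^k\in\partial\Omega$), and fix a subordinate partition of unity $\{\zeta_k\}$ of purely spatial cutoffs. Because $a^{ij}\in\textit{VMO}_x(\mathcal{Q}_T)$, the radius $r$ may be chosen so small that the $\textit{BMO}_x$-oscillation of the $a^{ij}$ over each covering parabolic cylinder falls below the smallness threshold needed to invoke the estimates of \cite{DK11}; this is precisely the mechanism by which the $\textit{VMO}_x$ hypothesis enters the argument.

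On each interior piece I would localize: since $\zeta_k$ is time-independent, $v_k:=\zeta_k u$ solves $\CL_0 v_k=\zeta_k f-a^{ij}\bigl(2D_i\zeta_k\,D_j u+u\,D_iD_j\zeta_k\bigr)$, an equation with right-hand side in $L^p$ whose commutator term is of lower order in $u$, so the interior $W^{2,1}_p$-estimate of \cite{DK11} (Cauchy problem, $v_k$ having compact spatial support) applies. On each boundary piece I would first flatten $\partial\Omega$ through the $\C^{1,1}$ change of variables $y'=x'$, $y_n=x_n-f(x')$ used in Theorem~\ref{extension-theorem}. As $f\in\C^{1,1}$, this map is bi-Lipschitz with Lipschitz Jacobian, so the transformed operator is again uniformly parabolic (with $\partial_t$ untouched), its principal coefficients remain $\textit{VMO}_x$ (the class being stable under bi-Lipschitz changes of the spatial variables), and the only new terms are first-order ones whose coefficients, generated by $D^2f\in L_\infty$, are bounded. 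The homogeneous Dirichlet condition $u=0$ on $\Gamma_T$ transplants to a homogeneous Dirichlet condition on the flat piece $\{y_n=0\}$, so the flat-boundary Cauchy--Dirichlet estimate of \cite{DK11} applies to $\zeta_k\tilde u$.

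Summing the local bounds against the partition of unity and transplanting back, I would reach an inequality of the form $\|u\|_{W^{2,1}_p(\mathcal{Q}_T)}\le C\bigl(\|f\|_{p,\mathcal{Q}_T}+\|Du\|_{p,\mathcal{Q}_T}+\|u\|_{p,\mathcal{Q}_T}\bigr)$, where the middle term collects both the commutators and the $L_\infty$ first-order terms produced by the flattening. The final step is to absorb $\|Du\|_{p,\mathcal{Q}_T}$ by the interpolation inequality $\|Du\|_{p,\mathcal{Q}_T}\le\varepsilon\|D(Du)\|_{p,\mathcal{Q}_T}+C_\varepsilon\|u\|_{p,\mathcal{Q}_T}$, taking $\varepsilon$ small enough to move the second-order term to the left-hand side; this yields \eqref{CD1} with $C$ depending only on the stated quantities.

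I expect the main obstacle to be the bookkeeping in the boundary step: one must verify that the $\C^{1,1}$ flattening simultaneously preserves uniform parabolicity \emph{and} the $\textit{VMO}_x$ modulus of the principal coefficients while keeping every newly created lower-order coefficient in $L_\infty$, and that all resulting constants depend only on $n$, $p$, $\nu$, $T$ and the properties of $\partial\Omega$. Coupled with this is the requirement of matching the localization scale to the $\textit{VMO}_x$ modulus so that the smallness hypothesis of \cite{DK11} holds on every covering cylinder; once these two points are secured, the remaining commutator and interpolation computations are routine.
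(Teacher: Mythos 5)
Your proposal is correct in its approach, but it takes a genuinely different and much longer route than the paper. The paper's proof of Lemma~\ref{CD-problem} is essentially a citation: \cite[Theorem~6]{DK11} already provides a \emph{global} coercive estimate for the Cauchy--Dirichlet problem on a bounded $\C^{1,1}$ cylinder with $\textit{VMO}_x$ (partially VMO) principal coefficients, valid after shifting the operator by a zeroth-order term $\lambda_0\ge1$; applying it to $\CL_0u=f$ gives $\|u\|_{W^{2,1}_p(\mathcal{Q}_T)}\le C\|f-2\lambda_0 u\|_{p,\mathcal{Q}_T}$, and the triangle inequality immediately yields \eqref{CD1}. What you do instead is rebuild that bounded-domain theorem from the interior and half-space model estimates of \cite{DK11}, via a covering with purely spatial cutoffs, $\C^{1,1}$ flattening, patching, and interpolation to absorb $\|Du\|_{p,\mathcal{Q}_T}$. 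That scheme is sound: the flattening acts only on the spatial variables, is bi-Lipschitz with Lipschitz Jacobian, hence preserves uniform parabolicity and the $\textit{VMO}_x$ class while generating only $L^\infty$ first-order terms, and the commutator and absorption steps are routine. In effect you reprove the result the paper quotes; your route buys self-containedness at the level of model (whole-space/half-space) estimates, while the paper's buys brevity and avoids all the flattening bookkeeping. One point to correct in your write-up: shrinking the covering radius does not \emph{reduce} the oscillation of $a^{ij}$ at small scales --- the oscillation there is whatever the $\textit{VMO}_x$-modulus dictates. The correct statement is that the $\textit{VMO}_x$ hypothesis guarantees the mean oscillation falls below the smallness threshold required in \cite{DK11} at \emph{all sufficiently small scales}, and one must take the covering radius (and the scale at which the local estimates are invoked) below that threshold scale; this is also exactly how the dependence of $C$ on the $\textit{VMO}_x$-moduli arises.
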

\begin{proof}
According to the general $L^p$-estimate of \cite[Theorem~6]{DK11}, 
 there is a $\lambda_0\geq 1$ such that
$$
\|u\|_{W^{2,1}_p(\mathcal{Q}_T)} \leq C \|f-2\lambda_0u\|_{p,\mathcal{Q}_T},
$$
and this implies  
 \eqref{CD1}.
\end{proof}

\begin{remark} \label{rem1}
The same proof gives a coercive estimate for the solution of the Cauchy problem for the uniformly parabolic equation on $\Gamma_T$
$$
\CB_0u:= \partial_tu-\alpha^{ij}(x,t)d_id_j u=g
$$
with zero initial condition. Namely, if $u\in W^{2,1}_q(\Gamma_T)$, $1<q<\infty$, then
\begin{equation*}
\|u\|_{W^{2,1}_q(\Gamma_T)} \leq C\Big( \|g\|_{q,\Gamma_T}+
\|u\|_{q,\Gamma_T}\Big),
\end{equation*}
where $C$ depends on $n$, $q$, $\nu$, $T$, the properties of $\partial\Omega$, and on the $\textit{VMO}_x$-moduli of the coefficients $\alpha^{ij}$.
\end{remark}

\section{Solvability of the linear parabolic Venttsel problem}\label{sec3}

Assume $\partial\Omega \in \C^{1,1}$ and
let the exponents $q$ and $p$ be chosen such that (see Fig.~\ref{fig1})
\begin{equation} \label{pq-cond}
1<p \le q\,\frac{n+2}{n+1} <p^*:= \frac {(n+2)p}{(n+2-p)_+}\quad \text{and} \quad 
q >1.
\end{equation}

\begin{figure}[ht]
\centerline{\includegraphics[scale=.95]{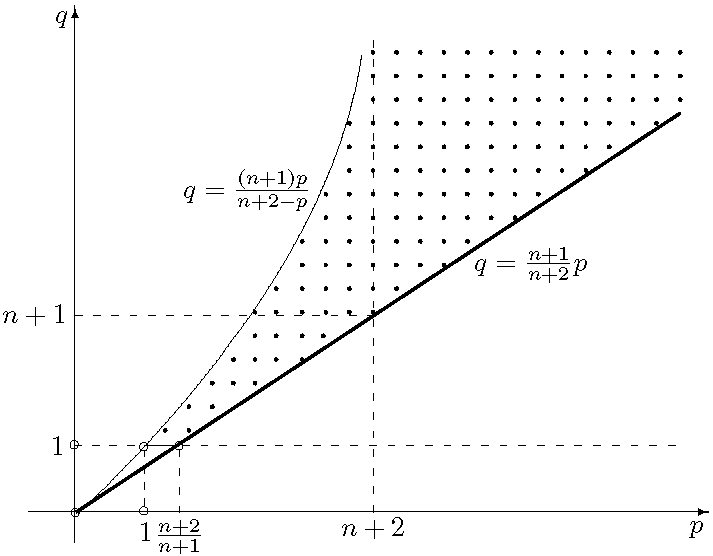}}
\vspace*{-.4cm}
\caption{The exponents $p$ and $q$}
\label{fig1}
\end{figure}

We introduce a linear parabolic operator $\CL$,
\begin{gather}
\label{4}
\CL := \partial_t-a^{ij}(x,t)D_iD_j+b^i(x,t)D_i+c(x,t),\\
\label{L1}
a^{ij}(x,t)=a^{ji}(x,t) \quad (x,t)\in \mathcal{Q}_T, \qquad a^{ij} \in \textit{VMO}_x\,(\mathcal{Q}_T), \tag{L1}\\
\label{L2}
\nu |\xi|^2 \le a^{ij}\xi_i\xi_j \le \nu^{-1} |\xi|^2 \quad \forall \xi\in \R^n, \quad \nu=\text{const}>0, \tag{L2}
\end{gather}
and a linear boundary operator $\CB$,
\begin{gather}
\label{5}
\CB := \partial_t-\alpha^{ij}(x,t)d_id_j+\beta^i(x,t)D_i+\gamma (x,t),\\
\label{B1}
\alpha^{ij}(x,t)=\alpha^{ji}(x,t) \quad (x,t)\in \Gamma_T, \qquad \alpha^{ij} \in \textit{VMO}_x\,(\Gamma_T), \tag{B1} \\
\label{B2}
\nu |\xi^*|^2 \le \alpha^{ij} \xi^*_i \xi^*_j \le \nu^{-1}|\xi^*|^2, \quad \forall\xi^*\in \R^n, \quad  \xi^* \perp \n (x). \tag{B2}
\end{gather}

Set $\mathbf{b}(x,t)=\big(b^1(x,t),\ldots,b^n(x,t)\big)$ and assume that the lower-order coefficients of the operator $\mathcal{L}$ satisfy the following integrability conditions
\begin{gather}
\label{L3}
\begin{aligned}
&|\mathbf{b}|\in L^{\max\{p,n+2\}}(\mathcal{Q}_T), && \text{if} \quad p \neq n+2,\\
&|\mathbf{b}| \left( \log{(1+|\mathbf{b}|)}\right)^{\frac {n+1}{n+2}} \in L^{n+2}(\mathcal{Q}_T), && \text{if} \quad p=n+2,
\end{aligned} 
\tag{L3}
\end{gather}
and
\begin{gather}
\label{L4}
\begin{aligned}
&c\in L^{\max\{p, \frac {n+2}2\}}(\mathcal{Q}_T), &&  \text{if} \quad p \neq \frac {n+2}2,\\
&c\left(\log{( 1+|c|)}\right)^{\frac n{n+2}} \in L^{\frac {n+2}2}(\mathcal{Q}_T), && \text{if} \quad p=\frac {n+2}2.
\end{aligned}
\tag{L4}
\end{gather}

The assumptions on the lower-order coefficients of the operator $\CB$ are as follows.  
We set $\boldsymbol{\beta}(x,t)=\big(\beta^1(x,t),\ldots,\beta^n(x,t)\big)$, define the normal and tangential components of the vector field $\boldsymbol{\beta} (x,t)$ as
$$
\beta_0(x,t):= \beta^i (x,t) \mathbf{n}_i(x); \quad \boldsymbol{\beta}^*(x,t):= \boldsymbol{\beta} (x,t) - \beta_0(x,t)  \mathbf{n} (x),
$$ 
and assume that
\begin{gather}
\label{B3}
\begin{aligned}
&|\boldsymbol{\beta}^*|\in L^{\max\{q,n+1\}}(\Gamma_T), &&  \text{if} \quad q \neq n+1,\\
&|\boldsymbol{\beta}^*| \left( \log{(1+|\boldsymbol{\beta}^*|)}\right)^{\frac n{n+1}} \in L^{n+1}(\Gamma_T), && \text{if} \quad q=n+1,
\end{aligned}
\tag{B3}
\end{gather}
\begin{gather}
\label{B5}
\begin{aligned}
&\beta_0\in L^q(\Gamma_T), && \text{if} \quad p >n+2,\\
&\beta_0\in L^{qp^*/(p^*-q\frac {n+2}{n+1})}(\Gamma_T), && \text{if} \quad p <n+2,\\
&\beta_0 \left( \log{(1+|\beta_0|)}\right)^{\frac {n+1}{n+2}} \in L^q(\Gamma_T), && \text{if} \quad p=n+2,
\end{aligned}
\tag{B4}
\end{gather}
\begin{gather}
\label{B4}
\begin{aligned}
&\gamma\in L^{\max\{q,\frac {n+1}2\}}(\Gamma_T),&& \text{if} \quad q \neq \frac {n+1}2,\\
&\gamma\left(\log{( 1+|\gamma|)}\right)^{\frac {n-1}{n+1}} \in L^{\frac {n+1}2}(\Gamma_T), && \text{if} \quad q=\frac {n+1}2.
\end{aligned}
\tag{B5}
\end{gather}

Our first result provides an \textit{a~priori} estimate for any strong solution to the linear parabolic Venttsel problem in terms of the data of the problem.
\begin{thm} \label{apriori-estimate}
Let the exponents $p$ and $q$ be chosen in accordance with \eqref{pq-cond},   $\partial\Omega \in \C^{1,1}$
and assume that conditions \eqref{L1}--\eqref{L4} and 
\eqref{B1}--\eqref{B4} are verified.  

Suppose that a function $u\in V_{p,q}(\mathcal{Q}_T)$  satisfies the equation
\begin{equation} \label{2.1-AN95}
\CL u(x,t)=f(x,t) \quad \text{a.e. in}\ \mathcal{Q}_T
\end{equation}
and the boundary condition
\begin{equation} \label{2.2-AN95}
\CB u(x,t)=g (x,t) \quad \text{a.e. on}\ \Gamma_T
\end{equation}
with $f \in L^p(\mathcal{Q}_T)$ and $g \in L^q(\Gamma_T)$. If $u\big|_{t=0}=0$ in $\overline{\Omega}$
then
\begin{equation} \label{2.3-AN95}
\|u\|_{V_{p,q}(\mathcal{Q}_T)} \le  C_1 \Big(\|f\|_{p,\mathcal{Q}_T}+\|g\|_{q, \Gamma_T}
\Big)
\end{equation}
with a constant $C_1$ depending  on $n$, $\nu$, $p$, $q$, $\mathrm{diam}\,\Omega$, $T$, the properties of $\partial\Omega$, on the $\textit{VMO}_x$-moduli  of the coefficients $a^{ij}(x,t)$ and  $\alpha^{ij}(x,t)$, and on the moduli of continuity of the functions $|\mathbf{b}|$, $c$, $|\boldsymbol{\beta}^*|$, $\beta_0$, and $\gamma$ in the corresponding functional spaces defined by conditions \eqref{L3}--\eqref{L4} and 
\eqref{B3}--\eqref{B4}, respectively.
\end{thm}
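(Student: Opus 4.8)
The plan is to establish \eqref{2.3-AN95} by decoupling the estimate over the lateral boundary $\Gamma_T$ from the estimate over the whole cylinder $\mathcal{Q}_T$ and then closing the two bounds against each other. Write $X:=\|u\|_{W^{2,1}_p(\mathcal{Q}_T)}$ and $Y:=\|u\|_{W^{2,1}_q(\Gamma_T)}$, so that $\|u\|_{V_{p,q}(\mathcal{Q}_T)}=X+Y$. The starting point is that on $\Gamma_T$ the field $\boldsymbol{\beta}$ splits as $\beta^iD_iu=\boldsymbol{\beta}^*\!\cdot du+\beta_0\,\partial_{\mathbf{n}}u$, so that the boundary condition \eqref{2.2-AN95} is recast as a tangential Cauchy problem
$$
\CB_0u:=\partial_tu-\alpha^{ij}d_id_ju=g-\boldsymbol{\beta}^*\!\cdot du-\gamma u-\beta_0\,\partial_{\mathbf{n}}u\quad\text{on }\Gamma_T,
$$
in which only the last term carries information coming from the interior of $\mathcal{Q}_T$. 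This is the \emph{autonomous} form alluded to in the Introduction: all genuinely coupled behaviour is concentrated in $\beta_0\,\partial_{\mathbf{n}}u$.

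\emph{Lateral estimate.} Applying Remark~\ref{rem1} to $\CB_0$ gives $Y\le C\big(\|g\|_{q,\Gamma_T}+\|\boldsymbol{\beta}^*\!\cdot du\|_{q,\Gamma_T}+\|\gamma u\|_{q,\Gamma_T}+\|\beta_0\,\partial_{\mathbf{n}}u\|_{q,\Gamma_T}+\|u\|_{q,\Gamma_T}\big)$. Since $\Gamma_T$ carries parabolic dimension $n+1$, the tangential Sobolev embedding $W^{2,1}_q(\Gamma_T)\hookrightarrow L^{s}(\Gamma_T)$ together with the integrability \eqref{B3} of $\boldsymbol{\beta}^*$ and the stated condition on $\gamma$ lets me bound the two tangential lower-order terms by an interpolation inequality of the form $\varepsilon Y+C_\varepsilon\|u\|_{q,\Gamma_T}$; in the borderline exponents the logarithmic (Orlicz) refinements built into those hypotheses are exactly what keep the corresponding multiplication operators continuous and the gain $\varepsilon$ available. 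It remains to control $\|\beta_0\,\partial_{\mathbf{n}}u\|_{q,\Gamma_T}$, the heart of the argument: by H\"older with the exponent prescribed for $\beta_0$ and a trace of the interior gradient onto $\Gamma_T$, the strict inequality $q\,\frac{n+2}{n+1}<p^*$ in \eqref{pq-cond} yields, through a further $\varepsilon$–interpolation, $\|\beta_0\,\partial_{\mathbf{n}}u\|_{q,\Gamma_T}\le \varepsilon X+C_\varepsilon\|u\|_{p,\mathcal{Q}_T}$.

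\emph{Interior estimate and closing.} Knowing $u|_{\Gamma_T}$ controlled in $W^{2,1}_q(\Gamma_T)$, I regard \eqref{2.1-AN95} as a Cauchy--Dirichlet problem for $\CL_0=\partial_t-a^{ij}D_iD_j$ with boundary value $u|_{\Gamma_T}$. Subtracting the extension $\Pi(u|_{\Gamma_T})$ of Theorem~\ref{extension-theorem}—which is precisely where the compatibility $p\le q\,\frac{n+2}{n+1}$ is used and which keeps the vanishing initial data—reduces to zero data on $\partial_P\mathcal{Q}_T$, and Lemma~\ref{CD-problem} gives $X\le C\big(\|f\|_{p,\mathcal{Q}_T}+\|b^iD_iu\|_{p,\mathcal{Q}_T}+\|cu\|_{p,\mathcal{Q}_T}+N_0Y+\|u\|_{p,\mathcal{Q}_T}\big)$, the interior lower-order terms being absorbed as $\varepsilon X+C_\varepsilon\|u\|_{p,\mathcal{Q}_T}$ via $W^{2,1}_p(\mathcal{Q}_T)\hookrightarrow L^{p^*}$ and \eqref{L3}, \eqref{L4}. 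Substituting the lateral bound for $Y$ into this display and choosing $\varepsilon$ small relative to $CN_0$ absorbs $X$ and then $Y$, leaving $X+Y\le C(\|f\|_{p,\mathcal{Q}_T}+\|g\|_{q,\Gamma_T})+C(\|u\|_{p,\mathcal{Q}_T}+\|u\|_{q,\Gamma_T})$. The zeroth-order norms are removed by the initial condition: on a slab $\mathcal{Q}_\tau:=\Omega\times(0,\tau)$ one has $u(\cdot,t)=\int_0^t\partial_su(\cdot,s)\,ds$, hence $\|u\|_{p,\mathcal{Q}_\tau}\le C\tau\,\|\partial_tu\|_{p,\mathcal{Q}_\tau}$ and likewise on $\Gamma_\tau$, so for $\tau$ small these norms are absorbed into $X+Y$; partitioning $(0,T)$ into finitely many such slabs and iterating yields \eqref{2.3-AN95}.

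I expect the decisive difficulty to be the normal-derivative term $\|\beta_0\,\partial_{\mathbf{n}}u\|_{q,\Gamma_T}$: it is the only place where the interior and boundary scales genuinely interact, and handling it requires a sharp anisotropic trace theorem for the gradient of $W^{2,1}_p(\mathcal{Q}_T)$-functions restricted to the parabolic boundary $\Gamma_T$, matched against the integrability class prescribed for $\beta_0$. The strict inequality $q\,\frac{n+2}{n+1}<p^*$ supplies the surplus integrability needed to trade the top-order interior norm for a small multiple of $X$, while the critical exponents force the use of the logarithmic/Orlicz hypotheses; verifying these multiplicative inequalities with absorbable constants, and tracking the dependence of $C_1$ on the moduli of continuity of the lower-order coefficients in their respective spaces, is the main technical labour.
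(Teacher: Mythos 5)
Your proposal follows essentially the same route as the paper's proof: the autonomous boundary estimate via Remark~\ref{rem1} applied to $\CB_0$, the interior Cauchy--Dirichlet estimate after subtracting the extension $\Pi(u|_{\Gamma_T})$ of Theorem~\ref{extension-theorem} and invoking Lemma~\ref{CD-problem}, the $\varepsilon$-absorption of the lower-order terms and of the normal-derivative term $\beta_0\,\partial_{\mathbf{n}}u$ through (trace) embeddings --- the paper's ``Munchhausen trick'' --- and finally the removal of the zeroth-order norms by exploiting $u|_{t=0}=0$ and iterating over thin time slabs. The only deviation, namely estimating $\|\beta_0\,\partial_{\mathbf{n}}u\|_{q,\Gamma_T}$ by $\varepsilon X+C_\varepsilon\|u\|_{p,\mathcal{Q}_T}$ \emph{before} combining the lateral and interior bounds rather than after (as in the paper's Step~3), is an immaterial reordering of the same absorption argument.
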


\begin{proof}
Similarly to \cite{ANPS21}, we apply the so-called \textit{Munchhausen trick} \cite{R86} and estimate the directional derivative $\partial_{\mathbf{n}} u$ in terms of itself.  
\medskip

\textit{Step 1.} 
Making use of 
$$
\beta^i(x,t)D_iu
={\beta^*}^i(x,t)d_iu+\beta_0(x,t)\partial_{\mathbf{n}}u,
$$ 
we rewrite the boundary condition \eqref{2.2-AN95} in the form
\begin{align} 
\label{2.14-AN95}
\CB_0 u &\ = g (x,t)-\beta_0 (x,t) 
\partial_{\mathbf{n}} u-{\beta^*}^i(x,t)d_iu-\gamma (x,t) u \\
\nonumber
&\ =: g_1 (x,t) \quad \text{a.e. on}\ \Gamma_T,
\end{align}
where the operator $\CB_0$ is defined in Remark~\ref{rem1}. According to this remark, we have
$$
\|u\|_{W^{2,1}_q (\Gamma_T)} \le N_1 \Big(\|g_1\|_{q, \Gamma_T}+\|u\|_{q,\Gamma_T}\Big).
$$ 

Employing \eqref{2.14-AN95} in the last inequality, it takes on the form 
\begin{align} 
\label{2.15-AN95}
\|u\|_{W^{2,1}_q (\Gamma_T)} \le N_1 \Big(\|g\|_{q, \Gamma_T} +\|u\|_{q, \Gamma_T}&+ \left\|\beta_0 
\partial_{\mathbf{n}} u\right\|_{q,\Gamma_T}\\
\nonumber
&+\|{\beta^*}^i d_iu\|_{q, \Gamma_T}+\|\gamma u\|_{q,\Gamma_T}\Big).
\end{align} 

To estimate  the last two terms in the right-hand of \eqref{2.15-AN95}, we claim that $u\mapsto{\beta^*}^id_iu$ and  $u\mapsto \gamma u$ are compact operators acting from $W^{2,1}_q (\Gamma_T)$ into $L^q(\Gamma_T)$. Indeed, the embedding theorems (see \cite[Theorem 10.4]{BIN75} for $q>n+1$, \cite[Theorem 10.2]{BIN75} for $q<n+1$, and \cite[Theorem 10.5]{BIN75} for $q=n+1$) and the assumptions \eqref{B3} show that
\begin{equation} \label{beta_num}
\|{\beta^*}^i d_iu\|_{q, \Gamma_T}\le C \|\boldsymbol{\beta^*}\|_{\mathcal{X}(\Gamma_T)}\|u\|_{W^{2,1}_q(\Gamma_T)},
\end{equation}
where $\mathcal{X}(\Gamma_T)$ stands for the corresponding functional space defined by relations \eqref{B3}.\footnote{This is the Lebesgue space if $q\ne n+1$ and the Orlicz space if $q=n+1$.}

Further, ${\beta^*}^i$ can be approximated in $\mathcal{X}(\Gamma_T)$ by bounded functions. Evidently, for bounded $\varphi^i$, the operator $u\mapsto \varphi^id_iu$ acting from $W^{2,1}_q (\Gamma_T)$ into $L^q(\Gamma_T)$ is compact. So, the estimate \eqref{beta_num} implies that the operator $u\mapsto{\beta^*}^id_iu$ can be norm-approximated by compact operators and therefore is compact itself. The operator $u\mapsto \gamma u$ is considered in the same way, and the claim follows.

It follows from compactness  that for arbitrary $\varepsilon>0$
\begin{equation}
\label{2.20-AN95}
\|{\beta^*}^id_iu\|_{q, \Gamma_T} +\|\gamma u\|_{q,\Gamma_T}
\le \varepsilon \|u\|_{W^{2,1}_q(\Gamma_T)} + N_2(\varepsilon)  \|u\|_{q, \Gamma_T},
\end{equation} 
where $N_2(\varepsilon)$ depends also on $n$, $q$, $T$, $\mathrm{diam}\,\Omega$, the properties of $\partial\Omega$ and on the moduli of continuity of the functions  $|\boldsymbol{\beta}^*|$ and $\gamma$ in the functional spaces defined by conditions  
\eqref{B3} and \eqref{B4}, respectively. 

Choosing $\varepsilon=\frac{1}{2N_1}$, we obtain from \eqref{2.15-AN95} and \eqref{2.20-AN95}
\begin{equation} \label{2.15-AN95-short}
\|u\|_{W^{2,1}_q(\Gamma_T)} \le N_3 \left( \|g\|_{q,\Gamma_T}+\|u\|_{q,\Gamma_T}+
\left\|\beta_0 
\partial_{\mathbf{n}} u\right\|_{q,\Gamma_T} \right),
\end{equation}
where $N_3=2N_1 N_2$. 

\medskip

\textit{Step 2.}
Consider in $\mathcal{Q}_T$ the function
\begin{equation} \label{2.16-AN95}
v(x,t)=u(x,t)-\widetilde{u}(x,t)
\end{equation}
with $\widetilde{u}=\Pi (u\big|_{\Gamma_T})$, where $\Pi$ is the extension operator constructed in Theorem~\ref{extension-theorem}. It is evident that $v$ solves the initial-boundary value problem
\begin{equation} \label{2.17-AN95}
\CL_0v=f_1(x,t) \quad \text{a.e. in}\ \mathcal{Q}_T,\qquad
v=0 \quad \text{on}\ \partial_P\mathcal{Q}_T,
\end{equation}
where $f_1(x,t):= f(x,t)-b^i(x,t)D_iu-c(x,t)u-\CL_0\widetilde{u}$, and  $\CL_0$ is defined in Lemma~\ref{CD-problem}.

By Lemma~\ref{CD-problem}, we have
$$
\|v\|_{W^{2,1}_p(\mathcal{Q}_T)} \le N_4 \Big(\|f_1\|_{p,\mathcal{Q}_T}+\|v\|_{p,\mathcal{Q}_T}\Big),
$$
where $N_4$ depends only on $n$, $p$, $\nu$, on the properties of $\partial\Omega$ and on the $\textit{VMO}_x$-moduli of the coefficients $a^{ij}$. In view of \eqref{2.16-AN95}, \eqref{6.1-AN95} and the definition of $f_1$, one can transform the latter inequality into
\begin{align} 
\label{2.18-AN95}
\|u\|_{W^{2,1}_p(\mathcal{Q}_T)} \le N_5 \Big(\|f\|_{p,\mathcal{Q}_T} +\|u\|_{p,\mathcal{Q}_T}&+\|u\|_{W^{2,1}_q(\Gamma_T)} \\
\nonumber
&+\|b^iD_iu\|_{p,\mathcal{Q}_T}+\|cu\|_{p,\mathcal{Q}_T}\Big)
\end{align}
and $N_5$ depends on the same quantities as $N_4$. Repeating the arguments from Step 1, we show that $u\mapsto b^iD_iu$ and  $u\mapsto c u$ are compact operators acting from $W^{2,1}_p (\mathcal{Q}_T)$ into $L^p(\mathcal{Q}_T)$. So, we estimate the last two terms on the right-hand side of \eqref{2.18-AN95} as did in deriving \eqref{2.15-AN95-short}, and thus
\begin{equation} \label{2.18-AN95-short}
\|u\|_{W^{2,1}_p(\mathcal{Q}_T)} \le N_6 \Big(\|f\|_{p,\mathcal{Q}_T} +\|u\|_{p,\mathcal{Q}_T}+\|u\|_{W^{2,1}_q(\Gamma_T)}\Big).
\end{equation}
Here $N_6$ is determined by the same parameters as $N_4$ and, in addition, it depends on the moduli of continuity of $|\mathbf{b}|$ and $c$ in the corresponding functional spaces given by \eqref{L3} and \eqref{L4}, respectively.

Combining  \eqref{2.15-AN95-short} with \eqref{2.18-AN95-short}, we get
\begin{align} \label{2.19-AN95}
\|u\|_{W^{2,1}_p(\mathcal{Q}_T)} \le N_7 \Big(\|f\|_{p,\mathcal{Q}_T} &+\|g\|_{q,\Gamma_T}\\
\nonumber
&+\|u\|_{p,\mathcal{Q}_T}+\|u\|_{q,\Gamma_T}+
\left\|\beta_0 \partial_{\mathbf{n}} u\right\|_{q,\Gamma_T}\Big),
\end{align}
where $N_7=N_6 \left(1+N_3\right)$. 

\medskip

\textit{Step 3.}  We are in a position now to estimate the term $\left\|\beta_0 \partial_{\mathbf{n}} u\right\|_{q,\partial\Omega}$. We use the trace embedding theorems (see \cite[Sec.~10.5-10.6]{BIN75}) and deduce, similarly to the previous steps, that $u\mapsto \beta_0 \partial_{\mathbf{n}}u$ is a compact operator acting from $W^{2,1}_p (\mathcal{Q}_T)$ into $L^q(\Gamma_T)$. This gives
\begin{equation} \label{beta_0}
\left\|\beta_0 \partial_{\mathbf{n}} u\right\|_{q,\Gamma_T} \le \frac{1}{2N_7} \|u\|_{W^{2,1}_p(\mathcal{Q}_T)} +N_8  \|u\|_{p, \mathcal{Q}_T},
\end{equation}
where $N_7$ is the constant from \eqref{2.19-AN95}, and $N_8$ is  determined by $n$, $p$, $T$, $\mathrm{diam}\, \Omega$, the properties of $\partial\Omega$ and on the moduli of continuity of $|\beta_0|$ in the functional spaces defined by conditions \eqref{B5}. \medskip

Substituting  \eqref{2.19-AN95} into \eqref{beta_0}, we finalize the Munchhausen trick and arrive at
$$
\left\|\beta_0 \partial_{\mathbf{n}} u\right\|_{q,\Gamma_T} \le (1+2N_8) \Big(\|f\|_{p,\mathcal{Q}_T} +\|g\|_{q,\Gamma_T}+\|u\|_{p,\mathcal{Q}_T}+\|u\|_{q,\Gamma_T}\Big).
$$
Inserting the last inequality into the right-hand sides of \eqref{2.15-AN95-short} and \eqref{2.19-AN95}  gives 
\begin{equation} \label{CD2-a}
\|u\|_{V_{p,q}(\mathcal{Q}_T)} \le  N_9 \Big(\|f\|_{p,\mathcal{Q}_T}+\|g\|_{q, \Gamma_T}
+\|u\|_{p,\mathcal{Q}_T}+\|u\|_{q,\Gamma_T} 
\Big).
\end{equation}
Finally, for a.a. $(x,t)\in \mathcal{Q}_T$ we have
$$
u(x,t)=\int\limits_0^t \partial_{\tau} u(x,\tau)\,d\tau
$$
whence
$$
\|u\|_{p,\mathcal{Q}_T} \leq T \|u\|_{W^{2,1}_p(\mathcal{Q}_T)}; \qquad
\|u\|_{q,\Gamma_T} \leq T \|u\|_{W^{2,1}_q(\Gamma_T)}.
$$
If $T>0$ is small enough, the norms $\|u\|_{p,\mathcal{Q}_T}$ and $\|u\|_{q, \Gamma_T}$ in \eqref{CD2-a} could be absorbed into $\|u\|_{V_{p,q}(\mathcal{Q}_T)}$ and this yields the desired bound \eqref{CD1}. Otherwise, we decompose $\mathcal{Q}_T$ as a finite union of cylinders $\Omega\times(t_i,t_{i+1})$ each of small enough height, and apply successively the bound just obtained to each of them. This completes the proof.
\end{proof}

Standard arguments, based on the parameter continuation and the coercive estimate \eqref{2.3-AN95}, lead to the following existence theorem.

\begin{thm}\label{existence}
Under the hypotheses of Theorem $\ref{apriori-estimate}$, 
the initial-boundary value problem \eqref{2.1-AN95}--\eqref{2.2-AN95} with zero initial condition admits a unique solution $u\in V_{p,q}(\mathcal{Q}_T)$ for arbitrary $f \in L^p(\mathcal{Q}_T)$ and $g \in L^q(\Gamma_T)$.
\end{thm}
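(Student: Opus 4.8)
The plan is to deduce uniqueness at once from the coercive bound \eqref{2.3-AN95} and to obtain existence by the method of continuity, feeding \eqref{2.3-AN95} in as the uniform a~priori estimate along a suitable deformation. For uniqueness, if $u_1,u_2\in V_{p,q}(\mathcal{Q}_T)$ solve \eqref{2.1-AN95}--\eqref{2.2-AN95} with zero initial data, then $w=u_1-u_2$ solves the homogeneous problem, and \eqref{2.3-AN95} forces $\|w\|_{V_{p,q}(\mathcal{Q}_T)}=0$, whence $u_1=u_2$.

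For existence, I would work in the Banach spaces $X=\{u\in V_{p,q}(\mathcal{Q}_T)\colon u|_{t=0}=0\}$ and $Y=L^p(\mathcal{Q}_T)\times L^q(\Gamma_T)$, with the bounded linear map $L_1u=(\CL u,\CB u|_{\Gamma_T})$, and connect $L_1$ to the decoupled operator $L_0u=(\partial_tu-\Delta u,\ \partial_tu|_{\Gamma_T}-d_id_iu|_{\Gamma_T})$ by the affine family $L_\sigma=(1-\sigma)L_0+\sigma L_1$, $\sigma\in[0,1]$, which corresponds to the operators
$$
\CL_\sigma=\partial_t-\big[\sigma a^{ij}+(1-\sigma)\delta^{ij}\big]D_iD_j+\sigma\big(b^iD_i+c\big),
$$
$$
\CB_\sigma=\partial_t-\big[\sigma\alpha^{ij}+(1-\sigma)\delta^{ij}\big]d_id_j+\sigma\big(\beta^iD_i+\gamma\big).
$$
By the standard continuity principle, once a bound $\|u\|_X\le C\|L_\sigma u\|_Y$ holds with $C$ independent of $\sigma$, surjectivity of $L_0$ entails surjectivity of $L_1$, which is exactly the asserted solvability.

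The uniform estimate is where Theorem~\ref{apriori-estimate} does the work: I only have to verify its hypotheses for $\CL_\sigma,\CB_\sigma$ uniformly in $\sigma$. The parabolicity sets \eqref{L2} and \eqref{B2} are convex and contain the identity, so $\sigma a^{ij}+(1-\sigma)\delta^{ij}$ and $\sigma\alpha^{ij}+(1-\sigma)\delta^{ij}$ stay uniformly parabolic with the same $\nu$ (assuming $\nu\le1$); constants have zero oscillation, so the $\textit{VMO}_x$-moduli of the deformed principal coefficients are dominated by those of $a^{ij},\alpha^{ij}$; and the scaled lower-order coefficients $\sigma b^i,\sigma c,\sigma\boldsymbol{\beta},\sigma\gamma$ have norms and moduli of continuity, in the spaces fixed by \eqref{L3}--\eqref{L4} and \eqref{B3}--\eqref{B4}, controlled by those of the originals for every $\sigma\in[0,1]$. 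Hence \eqref{2.3-AN95} holds for all $L_\sigma$ with one constant $C_1$.

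The main obstacle is surjectivity of $L_0$, and the key observation is that at $\sigma=0$ all lower-order terms vanish, in particular the coupling $\beta_0\partial_{\mathbf{n}}u$, which is the only way the boundary condition detects the interior normal derivative. The $\sigma=0$ problem therefore decouples. Given $(f,g)\in Y$, I would first solve the intrinsic parabolic Cauchy problem $\partial_tw-d_id_iw=g$ on the closed manifold $\Gamma_T$ with $w|_{t=0}=0$, whose solvability in $W^{2,1}_q(\Gamma_T)$ is classical and whose a~priori bound is recorded in Remark~\ref{rem1}; then extend by $\widetilde w=\Pi w\in W^{2,1}_p(\mathcal{Q}_T)$ through Theorem~\ref{extension-theorem}---this is exactly where the relation $p\le q\frac{n+2}{n+1}$ from \eqref{pq-cond} is needed---and finally solve the interior Cauchy--Dirichlet problem $\partial_tv-\Delta v=f-(\partial_t-\Delta)\widetilde w$ in $\mathcal{Q}_T$, $v=0$ on $\partial_P\mathcal{Q}_T$, solvable in $W^{2,1}_p(\mathcal{Q}_T)$ as the existence counterpart of Lemma~\ref{CD-problem} (via \cite[Theorem~6]{DK11}). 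Then $u=\widetilde w+v$ lies in $V_{p,q}(\mathcal{Q}_T)$, has $u|_{t=0}=0$ and $u|_{\Gamma_T}=w$, satisfies $\CL_0u=f$ and, since $\CB_0$ involves only tangential and time derivatives of the trace, $\CB_0u|_{\Gamma_T}=\partial_tw-d_id_iw=g$; thus $L_0u=(f,g)$. The continuity method then transfers surjectivity to $L_1$, and together with the uniqueness above this yields the claim.
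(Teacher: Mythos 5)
Your proposal is correct and follows essentially the same route as the paper: the authors dispose of this theorem in one line, invoking ``standard arguments, based on the parameter continuation and the coercive estimate \eqref{2.3-AN95},'' which is exactly your method-of-continuity scheme with Theorem~\ref{apriori-estimate} supplying the uniform bound. Your write-up merely fills in the standard details the paper leaves implicit (uniqueness from the coercive estimate, uniform verification of the hypotheses along the affine deformation, and surjectivity of the decoupled model operator via Remark~\ref{rem1}, Theorem~\ref{extension-theorem}, and the existence counterpart of Lemma~\ref{CD-problem}).
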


\begin{remark}\label{rem-Krylov}
For $p<n+2$ the assumption \eqref{L3} can be relaxed in terms of Morrey spaces using \cite[Theorem 4.1]{Kr23} as follows:
$$
\lim\limits_{\rho\to0}\rho^{1-\frac{n+2}{p_1}}\!\sup\limits_{(x,t)\in\mathcal{Q}_T}\|b^i\|_{p_1,Q_\rho(x,t)\cap \mathcal{Q}_T}=0 \quad \mbox{for some} \quad p_1\in (p,n+2).
$$
In a similar way one can relax assumptions \eqref{L4} (for $p<\frac{n+2}2$), \eqref{B3} (for $q<n+1$), and \eqref{B4} (for $q<\frac{n+1}2$).

Notice also that the conditions $a^{ij} \in \textit{VMO}_x\,(\mathcal{Q}_T)$ and $\alpha^{ij} \in \textit{VMO}_x\,(\Gamma_T)$ can be weakened to the assumption that the quantities $(a^{ij})^{\#(x)}_R$ and $(\alpha^{ij})^{\#(x)}_R$ are sufficiently small for small $R$, cf. \cite{DK11}.
\end{remark}

Finally, we recall the global maximum estimate for solutions of linear parabolic Venttsel problems.\footnote{In fact, the assumptions on $\partial\Omega$ in the papers \cite{AN95} and \cite{AN95a} (cf. Proposition \ref{Holder-est}) are weaker than $\partial\Omega \in \mathcal{C}^{1,1}.$ \label{foot}}

\begin{prop}[Theorem~2.1 in \cite{AN95}]
\label{global-max-principle}
Let $\partial\Omega \in \mathcal{C}^{1,1}$. Suppose that the function $u \in  V_{n+1,n}(\mathcal{Q}_T)$
is a solution of \eqref{2.1-AN95}--\eqref{2.2-AN95}. 

Assume also that $\beta_0\ge 0$ a.e. on $\Gamma_T$ and 
$$
|\mathbf{b}|, f_+, c_- \in L^{n+1}(\mathcal{Q}_T); \qquad 
|\boldsymbol{\beta}|, g_+, \gamma_- \in L^n(\Gamma_T).
$$

Then the  estimate 
$$
\sup\limits_{\mathcal{Q}_T}u \le C_2 \Big(
\|f_+\|_{n+1,\mathcal{Q}_T}+\|g_+\|_{n, \Gamma_T}+\sup\limits_{\Omega}u(\cdot,0)
\Big)
$$
holds with $C_2$ depending  only on $n$, $\nu$, $T$, $\text{diam}\,\Omega$, 
the properties of $\partial\Omega$,  and on the moduli of continuity of the functions $b^i$, $c_-$ in $L^{n+1}(\mathcal{Q}_T)$ and the functions $\beta^i$, $\gamma_-$ in $L^n(\Gamma_T)$.     
\end{prop}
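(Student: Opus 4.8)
The plan is to establish the estimate through an Aleksandrov--Bakelman--Pucci (ABP) type maximum principle adapted to the coupled interior/boundary structure of the Venttsel problem. First I note that, by the parabolic Sobolev embedding, $W^{2,1}_{n+1}(\mathcal{Q}_T)$ and $W^{2,1}_n(\Gamma_T)$ embed into spaces of continuous functions (since $n+1>\frac{n+2}2$ and, as $n\ge2$, also $n>\frac{n+1}2$), so $u$ is continuous on $\overline{\mathcal{Q}_T}$ and the supremum $M=\sup_{\mathcal{Q}_T}u$ is attained. If $M\le\sup_\Omega u(\cdot,0)$ there is nothing to prove, so I assume $M$ is attained at some $(x_0,t_0)$ with $t_0>0$. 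The whole argument uses only the negative parts $c_-,\gamma_-$ of the zero-order coefficients and the positive parts $f_+,g_+$ of the data: at a positive maximum the terms $cu$ and $\gamma u$ with $c,\gamma\ge0$ carry the favorable sign, which is exactly why only $c_-,\gamma_-$ enter the hypotheses.

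The estimate proceeds in two coupled layers. The interior layer applies the parabolic ABP estimate for $\CL$ on the space--time cylinder $\mathcal{Q}_T$, whose effective dimension is $n+1$; this is precisely the exponent attached to $\|f_+\|_{n+1,\mathcal{Q}_T}$. The Krylov--Tso contact-set construction, together with the ellipticity bound \eqref{L2} and the arithmetic--geometric mean inequality, controls $\sup_{\mathcal{Q}_T}u$ by the values of $u$ on the parabolic boundary $\partial_P\mathcal{Q}_T$ plus $C\|f_+\|_{n+1}$ over the upper contact set. The first-order term $b^iD_iu$ and the zero-order term $c_-u$ are absorbed using $|\mathbf{b}|,c_-\in L^{n+1}(\mathcal{Q}_T)$; because $n+1$ is the borderline exponent, this absorption relies on the absolute continuity of the integral over small sets, which accounts for the dependence of the constant on the moduli of continuity of $b^i,c_-$ in $L^{n+1}$ rather than merely on their norms. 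Since the lateral values of $u$ are not prescribed data but are governed by the boundary PDE, this layer leaves $\sup_{\Gamma_T}u$ still to be estimated.

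The boundary layer is where the real difficulty lies. On $\Gamma_T$, regarded intrinsically as an $n$-dimensional space--time manifold (with $\partial\Omega$ an $(n-1)$-dimensional surface), the function $u$ satisfies, after the splitting $\beta^iD_iu=\beta_0\partial_{\mathbf{n}}u+{\beta^*}^id_iu$, a parabolic equation of the form
$$
\partial_tu-\alpha^{ij}d_id_ju=g-\beta_0\partial_{\mathbf{n}}u-{\beta^*}^id_iu-\gamma u,
$$
to which I wish to apply the $n$-dimensional ABP estimate (hence the exponent $n$ on $\|g_+\|_{n,\Gamma_T}$ and on $|\boldsymbol{\beta}|,\gamma_-\in L^n$). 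The tangential lower-order terms are absorbed exactly as in the interior layer, using $|\boldsymbol{\beta}^*|\le|\boldsymbol{\beta}|\in L^n$ and $\gamma_-\in L^n$. The main obstacle is the coupling term $\beta_0\partial_{\mathbf{n}}u$, which involves the transverse derivative not seen by the intrinsic boundary data. At the single global maximum point on $\Gamma_T$ one has $\partial_{\mathbf{n}}u\ge0$ (values decrease inward), so with $\beta_0\ge0$ the term $-\beta_0\partial_{\mathbf{n}}u\le0$ carries the favorable sign and may be discarded; the crux is to propagate this sign over the entire upper contact set required by the ABP construction, where the normal derivative is a priori uncontrolled. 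I expect this to be the technical heart of the proof: one must interlock the boundary contact set with the interior one, showing that at each lateral contact point either $\partial_{\mathbf{n}}u\ge0$ (and the coupling term is dropped) or the configuration extends inward and is governed by $\CL$, so that the interior ABP absorbs it. It is precisely the sign hypothesis $\beta_0\ge0$ that renders the Venttsel condition non-degenerate for the maximum principle and lets this closure succeed.

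Finally I combine the two layers. The boundary layer yields $\sup_{\Gamma_T}u\le\sup_{\partial\Omega}u(\cdot,0)+C\|g_+\|_{n,\Gamma_T}$, and substituting this for the lateral contribution to $\sup_{\partial_P\mathcal{Q}_T}u$ in the interior estimate, together with $\sup_{\partial\Omega}u(\cdot,0)\le\sup_\Omega u(\cdot,0)$, produces
$$
\sup_{\mathcal{Q}_T}u\le\sup_\Omega u(\cdot,0)+C\big(\|f_+\|_{n+1,\mathcal{Q}_T}+\|g_+\|_{n,\Gamma_T}\big),
$$
which is the asserted bound with $C_2=C$. All dependencies of $C_2$ listed in the statement enter as expected: $n,\nu$ from the ABP constants and ellipticity; $T$, $\mathrm{diam}\,\Omega$, and the properties of $\partial\Omega$ from the geometry of the contact-set construction and the flattening of the boundary; and the moduli of continuity of $b^i,c_-,\beta^i,\gamma_-$ from the borderline absorption of the lower-order terms.
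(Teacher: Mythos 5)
First, a point of reference: the paper itself does not prove this proposition at all — it is imported verbatim as Theorem~2.1 of [AN95] (a citation, with a footnote that even the boundary regularity assumed there is weaker). So there is no in-paper argument to match your approach against; your attempt has to stand on its own as a proof, and as written it does not.

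The genuine gap is exactly the step you flag but then leave as an expectation: the treatment of the coupling term $\beta_0\partial_{\mathbf{n}}u$ over the \emph{whole} boundary contact set. Your sign argument ($\partial_{\mathbf{n}}u\ge 0$ because values decrease inward) is valid only at a point where the maximum over the full closed cylinder is attained on $\Gamma_T$; at a general point of the intrinsic parabolic contact set of $u\big|_{\Gamma_T}$ (touching from above by functions affine in the tangential variables, among boundary points at earlier times) there is no information whatsoever about $\partial_{\mathbf{n}}u$, since membership in that set is blind to the values of $u$ inside $\Omega$. Worse, the proposed dichotomy — ``either $\partial_{\mathbf{n}}u\ge0$ or the configuration extends inward and is governed by $\CL$'' — cannot be made precise in your two-layer scheme: a lateral contact point with $\partial_{\mathbf{n}}u<0$ is precisely a point where $u$ is \emph{larger} just inside $\Omega$, and such a point never belongs to the interior (global concave-envelope) contact set, because the envelope there is pulled up by the nearby interior values; so ``the interior ABP absorbs it'' has no content. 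The known resolutions (Luo--Trudinger in the elliptic case, Apushkinskaya--Nazarov in the parabolic one) do not run two separate ABP constructions and interlock them; they run a \emph{single} global contact-set construction over $\overline{\Omega}$, so that at a lateral contact point the touching function $L(y)=u(x_0,t_0)+p\cdot(y-x_0)$ dominates $u$ in all of $\overline{\Omega}$ at earlier times, which yields $\partial_{\mathbf{n}}u\ge p\cdot\mathbf{n}$, $du=p^*$, and second-order tangential control up to curvature terms proportional to $p\cdot\mathbf{n}$; the coupling term is then bounded by $\beta_0|p|$ and must be integrated against the Monge--Amp\`ere measure of the contact set — this is also why the hypothesis requires the full vector $|\boldsymbol{\beta}|\in L^n(\Gamma_T)$, a dependence your sketch never uses. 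A symptom of the unresolved coupling is the internal inconsistency at the end: you combine the layers via $\sup_{\Gamma_T}u\le\sup_{\partial\Omega}u(\cdot,0)+C\|g_+\|_{n,\Gamma_T}$, a bound free of $f$, even though your own proposed mechanism (absorbing bad lateral contact points into the interior equation) would necessarily feed $\|f_+\|_{n+1,\mathcal{Q}_T}$ into any boundary estimate. Until the contact-set analysis at the boundary is actually carried out, the argument is a plan, not a proof.
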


\section{The quasilinear parabolic Venttsel problem} \label{sec4}

We aim now to the study of the quasilinear parabolic equation
\begin{equation} \label{1.1-AN95}
\partial_tu-a^{ij}(x,t,u)D_iD_ju+a(x,t,u,Du)=0 \quad \text{a.e. in}\ \mathcal{Q}_T
\end{equation}
coupled with the quasilinear parabolic Venttsel boundary condition
\begin{equation} \label{1.2-AN95}
\partial_tu-\alpha^{ij}(x,t,u)d_id_iu+\alpha (x,t,u,Du)=0 \quad \text{a.e. on}\ \Gamma_T
 \end{equation}
and the initial condition 
\begin{equation} \label{1.3-AN95}
u\big|_{t=0}=0 \quad \text{in}\  \overline{\Omega}.
\end{equation}
As in the previous Section, we assume $\partial\Omega\in\mathcal{C}^{1,1}$.

Notice that \eqref{1.2-AN95} is not an autonomous equation on $\Gamma_T$ because it involves not only tangential derivatives but also the normal component of the gradient $Du$.

We suppose that $a^{ij}(x,t,z)$, $a(x,t,z,p)$, $\alpha^{ij}(x,t,z)$ and $\alpha(x,t,z,p)$ are Carath\'{e}odory functions, i.e., these are measurable with respect to $(x,t)$ for all $(z,p) \in \mathbb{R}\times\mathbb{R}^n$ and continuous with respect to $z$ and $p$ for almost all $(x,t)\in \mathcal{Q}_T$ (respectively, for almost all $(x,t)\in \Gamma_T$).

The equation \eqref{1.1-AN95} is assumed to be uniformly parabolic, that is, for almost all $ (x,t)\in \mathcal{Q}_T$ and for all $z\in \mathbb{R}$ we have
\begin{equation} 
\label{A1}
\begin{gathered} 
\nu |\xi|^2 \le a^{ij}(x,t,z)\xi_i\xi_j \le \nu^{-1} |\xi|^2 \quad \forall \xi\in \R^n, \ \nu=\text{const}>0, \\ a^{ij}(x,t,z)=a^{ji}(x,t,z).
\tag{A1}
\end{gathered}
\end{equation}

Regarding the regularity conditions of the coefficients $a^{ij}$, we suppose that
\begin{equation} 
\label{A2}
\begin{gathered}
a^{ij}(\cdot,z)\in \textit{VMO}_x\   \textit{locally uniformly in}\ z, \textit{that is}, \\ 
\lim_{r\to 0} \sup_{z\in [-M,M]} (a^{ij}(\cdot,z))^{\#(x)}_r=0,
\tag{A2}
\end{gathered}
\end{equation}
where $(a^{ij}(\cdot,z))^{\#(x)}_r$ is the $\textit{VMO}_x$-modulus of $a^{ij}(\cdot,z)$ defined by \eqref{VMO-mod} with $Q_{\rho}(x,t)$ replaced by $Q_{\rho}(x,t) \cap \mathcal{Q}_T$. Moreover, we need $a^{ij}$ to be   
locally uniformly continuous  with respect to $z$, uniformly in $(x,t)$. Namely, for all $z, \widehat{z}\in [-M,M]$  
\begin{equation} 
\label{A3}
\begin{gathered}
|a^{ij}(x,t,z)-a^{ij}(x,t,\widehat{z})|\le \tau_M(|z-\widehat{z}|)\quad \textit{a.e. in}\ \mathcal{Q}_T,
\\ 
\textit{with a non-decreasing function}\ 
\tau_M(\zeta),\ \lim_{\zeta \to 0}\tau_M (\zeta)=0.
\tag{A3}
\end{gathered}
\end{equation}

The function $a(x,t,z,p)$ is assumed to grow quadratically  with respect to the gradient, i.e., for almost all $(x,t)\in \mathcal{Q}_T$ and for all $(z,p)\in \mathbb{R}\times\mathbb{R}^n$
\begin{equation} 
\label{A4}
|a(x,t,z,p)|\le \eta(|z|)\Big(\mu |p|^2+b(x,t)|p|+\Phi(x,t)\Big)
\tag{A4}
\end{equation}
where $\mu\ge0$ is a constant, $\eta\in\mathcal{C}(\mathbb R_+)$ is a non-decreasing function, and 
\begin{equation} \label{A5}
 b \left(\log{(1+|b|)}\right)^{\frac{n+1}{n+2}} \in L^{n+2}(\mathcal{Q}_T),\quad \Phi \in L^{n+2}(\mathcal{Q}_T).
\tag{A5}
\end{equation}

Further on, we assume that the boundary condition 	\eqref{1.2-AN95} is a uniformly parabolic Venttsel condition in the sense that  
for almost all $(x,t)\in \Gamma_T$ and for all $(z,p) \in \mathbb{R}\times \mathbb{R}^n$ we have:
\begin{equation} \label{V}
\begin{gathered}
\textit{the function}\ \alpha(x,t,z,p)\ \textit{is weakly differentiable with respect to}\ p_i,\ \textit{and}\\
0 \le \alpha_{p_i}(x,t,z,p)\mathbf{n}_i(x) \le \eta(|z|)\beta_0(x,t),
\tag{V}
\end{gathered}
\end{equation}
with $\eta$ as above and
\begin{equation} \label{V0}
\beta_0 \left(\log{(1+|\beta_0|)}\right)^{\frac{n+1}{n+2}} \in L^{n+1}(\Gamma_T),
\tag{V0}
\end{equation}
and
\begin{equation} \label{V1}
\begin{gathered}
\nu |\xi^*|^2 \le \alpha^{ij}(x,t,z)\xi^*_i\xi^*_j \le \nu^{-1} |\xi^*|^2 \quad \forall \xi^*\in \R^n, \quad \xi^* \perp \mathbf{n}(x),\\ \nu=\text{const}>0, \quad \alpha^{ij}(x,t,z)=\alpha^{ji}(x,t,z).
\tag{V1}
\end{gathered}
\end{equation}
In addition, we impose regularity  conditions on the coefficients $\alpha^{ij}$ similar to these required for $a^{ij}$. Precisely,
\begin{equation} \label{V2}
\begin{gathered}
\alpha^{ij}(\cdot ,z)\in \textit{VMO}_x\   \textit{locally uniformly in}\ z, \ \textit{that is},\\
\lim_{r\to 0} \sup_{z\in [-M,M]} (\alpha^{ij}(\cdot,z))^{\#(x)}_r=0,
 \tag{V2}
\end{gathered}
\end{equation}
where $(\alpha^{ij}(\cdot,z))^{\#(x)}_r$ is the $\textit{VMO}_x$-modulus of $\alpha^{ij}(\cdot,z)$ defined by \eqref{VMO-mod} with $Q_{\rho}(x,t)\cap \Gamma_T$ in the place of $Q_{\rho}(x,t)$,
and for all $z, \widehat{z}\in [-M,M]$,
\begin{equation} \label{V3}
\begin{gathered}
|\alpha^{ij}(x,t,z)-\alpha^{ij}(x,t,\widehat{z})|\le {\tau}_M(|z-\widehat{z}|)\quad \textit{a.e. on}\ \Gamma_T,
\\ \textit{with}\ 
{\tau}_M(\zeta)\
\textit{as in}\ \eqref{A3}.
\tag{V3}
\end{gathered}
\end{equation}

The term $\alpha(x,t,z,p)$ of \eqref{1.2-AN95} is required to support quadratic growth with respect to the tangential gradient, i.e., for almost all $(x,t)\in \Gamma_T$ and for all $z\in \mathbb{R}$, $p^*\in\mathbb{R}^n$, $p^*\perp \mathbf{n}(x)$
\begin{equation} \label{V4}
|\alpha(x,t,z,p^*)|\le \eta(|z|)\Big(\mu |p^*|^2+\beta (x,t)|p^*|+\Theta (x,t)\Big)
\tag{V4}
\end{equation}
with $\mu$ and $\eta$ as in \eqref{A4}, and 
\begin{equation} \label{V5}
\beta \left(\log{(1+|\beta|)}\right)^{\frac{n}{n+1}} \in L^{n+1}(\Gamma_T),\quad 
\Theta \in L^{n+1}(\Gamma_T).
\tag{V5}
\end{equation}

\begin{remark} \label{same-way}
Arguing in the same way as in \cite[Lemma~2.6.2]{MPS00} it can be shown that conditions \eqref{A2}--\eqref{A3} and \eqref{V2}--\eqref{V3} provide for $u\in \C(\overline{\mathcal{Q}}_T)$ the inclusions 
$$
a^{ij}(x,t,u(x,t)) \in \textit{VMO}_x \cap L^{\infty}(\mathcal{Q}_T),\quad \alpha^{ij}(x,t,u(x,t)) \in \textit{VMO}_x \cap L^{\infty}(\Gamma_T)
$$ 
with $\textit{VMO}_x$-moduli bounded in terms of the continuity modulus of $u(x,t)$ and $\sup_{\mathcal{Q}_T}|u|$.  
\end{remark}

The strong solvability of the problem \eqref{1.1-AN95}--\eqref{1.3-AN95}  in the space $V_{n+2,n+1}(\mathcal{Q}_T)$ will be proved by the aid of the Leray--Schauder fixed point theorem. To apply it, we have to derive \textit{a~priori} estimates in a suitable functional space for any solution to a family of quasilinear parabolic Venttsel problems. Following the classical approach of O.A.~Ladyzhenskaya and N.N.~Ural'tseva~\cite{LU86}, we obtain these estimates assuming  we already dispose of a bound for the supremum norm $\sup_{\mathcal{Q}_T}|u|$ of a solution $u \in  V_{n+2,n+1}(\mathcal{Q}_T)$.

\medskip

We recall, first of all, the \textit{a~priori} estimate for the H{\"o}lder norm of a strong solution.

\begin{prop}[Theorem 1 in \cite{AN95a}] \label{Holder-est}
Let $\partial\Omega \in \mathcal{C}^{1,1}$. Suppose that the function $u \in  V_{n+1,n}(\mathcal{Q}_T)$
is a solution of \eqref{1.1-AN95}--\eqref{1.3-AN95}. 

Assume also that conditions \eqref{A1}, \eqref{A4}, \eqref{V}, \eqref{V1} and \eqref{V4} are satisfied with
$$
b\in L^{n+2}(\mathcal{Q}_T);\quad \Phi\in L^{n+1}(\mathcal{Q}_T);\quad \beta_0,\beta\in L^{n+1}(\Gamma_T);\quad \Theta\in L^{n}(\Gamma_T).
$$

Then there exists a constant $\lambda >0$ depending only on $n$, $\nu$ and the properties of $\partial\Omega$, such that 
$$
\|u\|_{\mathcal{C}^{0,\lambda} (\overline{\mathcal{Q}_T})}\le M_{\lambda},
$$
where $M_{\lambda}$ depends only on $n$, $\nu$, $\mu$,  the properties of $\partial\Omega$, $M_0=\sup_{\mathcal{Q}_T}|u|$, ${\eta}(M_0)$, $\|\Phi\|_{n+1,\mathcal{Q}_T}$, $\|\Theta\|_{n,\Gamma_T}$ and on the moduli of continuity of the functions $b$, $\beta_0$ and $\beta$ in the corresponding Lebesgue spaces. 
\end{prop}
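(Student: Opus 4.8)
The plan is to reduce the quadratic gradient growth to a linear structure by an exponential substitution, and then to run a De Giorgi--Nash--Moser iteration adapted to the Venttsel coupling between the bulk and the lateral boundary. Since we already dispose of the bound $M_0=\sup_{\mathcal{Q}_T}|u|$, the factor $\eta(|u|)$ in \eqref{A4} and \eqref{V4} is controlled by $\eta(M_0)$. First I would set $v_{\pm}=e^{\pm\kappa u}$ and compute, using \eqref{A1} and \eqref{A4}, that $\partial_t v_{\pm}-a^{ij}D_iD_j v_{\pm}$ is dominated by a function of the data once $\kappa$ is chosen large enough (depending on $\nu$, $\mu$ and $\eta(M_0)$) so that the negative term $\kappa^2 a^{ij}D_iu\,D_ju\,v_{\pm}$ arising from the second derivative absorbs the quadratic contribution $\kappa\,\eta(M_0)\mu|Du|^2 v_{\pm}$ coming from $a(x,t,u,Du)$; the remaining linear term $b|Du|$ is then split off by Young's inequality at the cost of a zeroth-order summand controlled by $b^2$. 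The same manipulation on $\Gamma_T$, using \eqref{V1} and \eqref{V4}, turns the tangential boundary equation into a differential inequality with right-hand side controlled by $\beta^2$ and $\Theta$. In this way $v_{\pm}$ become, respectively, a subsolution and a supersolution of a \emph{linear} Venttsel problem with degenerate tangential diffusion on the boundary, and it suffices to prove the H\"older bound for such sub/supersolutions.

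The core of the argument is a family of local energy (Caccioppoli-type) inequalities in the \emph{combined} geometry. I would flatten $\partial\Omega$ locally as in Step~2 of Theorem~\ref{extension-theorem} and, for each level $k$ and cut-off $\zeta$, test the bulk inequality with $\zeta^2(v-k)_{\pm}$. Integrating by parts produces the normal-flux term $\int_{\Gamma_T}a^{ij}D_j u\,\mathbf{n}_i\,\zeta^2(v-k)_{\pm}$, which is precisely where the Venttsel condition enters: by \eqref{V} the coupling coefficient satisfies $0\le\alpha_{p_i}\mathbf{n}_i\le\eta(|z|)\beta_0$, so this flux can be matched against the contribution of $\partial_{\mathbf{n}}u$ in the boundary equation and absorbed after one also tests the boundary inequality with its own $\zeta^2(v-k)_{\pm}$. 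Adding the two identities yields a single inequality controlling $\int|D(v-k)_{\pm}|^2$ over half-balls together with $\int|d(v-k)_{\pm}|^2$ over the boundary patch, with the quadratic gradient terms already removed by the exponential change and the remaining lower-order terms estimated through the embedding theorems of \cite{BIN75} exactly as in the proof of Theorem~\ref{apriori-estimate}.

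These energy inequalities show that $v_{\pm}$ belong to a parabolic De Giorgi class of Ladyzhenskaya--Ural'tseva type \cite{LU86}, enlarged so as to record simultaneously the bulk and the boundary level sets. From membership in this class the standard oscillation lemma gives a decay
$$
\operatorname*{osc}_{Q_{\rho/2}(x^0,t^0)\cap\overline{\mathcal{Q}_T}}u\ \le\ \theta\,\operatorname*{osc}_{Q_{\rho}(x^0,t^0)\cap\overline{\mathcal{Q}_T}}u+C\rho^{\delta}
$$
with $\theta=\theta(n,\nu,\partial\Omega)<1$, and iterating this estimate over dyadic cylinders yields H\"older continuity with an exponent $\lambda$ determined only by $\theta$, hence only by $n$, $\nu$ and the properties of $\partial\Omega$; the additive constant $M_{\lambda}$ collects $M_0$, $\eta(M_0)$ and the data norms $\|\Phi\|_{n+1,\mathcal{Q}_T}$, $\|\Theta\|_{n,\Gamma_T}$ together with the moduli of continuity of $b,\beta_0,\beta$. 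The regularity up to $t=0$ follows from the same scheme using the zero initial condition \eqref{1.3-AN95}.

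The step I expect to be the main obstacle is the boundary energy estimate: one must arrange the test-function geometry so that the normal flux generated by integrating the bulk equation by parts is exactly the quantity controlled by the boundary equation, and this bookkeeping has to remain uniform as $\partial\Omega$ is flattened and as one passes between the $n$-dimensional bulk measure and the $(n-1)$-dimensional surface measure. The sign condition in \eqref{V} is indispensable here, since without the nonnegativity $\alpha_{p_i}\mathbf{n}_i\ge 0$ the coupling term could not be absorbed and the combined De Giorgi class would fail to close.
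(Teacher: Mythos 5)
First, note that the paper itself does not prove Proposition~\ref{Holder-est}: it is imported verbatim as Theorem~1 of \cite{AN95a}, and the techniques there are of Krylov--Safonov/Landis type (growth lemmas and barrier arguments built on an Aleksandrov--Bakelman--Pucci-type maximum principle for Venttsel problems, i.e.\ the same circle of ideas as Proposition~\ref{global-max-principle}). Your opening move --- the exponential substitution $v_{\pm}=e^{\pm\kappa u}$ to absorb the quadratic gradient growth using \eqref{A1}, \eqref{A4}, \eqref{V1}, \eqref{V4} and the bound $\eta(M_0)$ --- is sound and is in fact the same reduction the authors use elsewhere (cf.\ the proof of Theorem~\ref{M0}, part 1). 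The gap lies in what you do next.

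The equations \eqref{1.1-AN95}--\eqref{1.2-AN95} are in \emph{nondivergence} form, $a^{ij}(x,t,u)D_iD_ju$ and $\alpha^{ij}(x,t,u)d_id_ju$, with principal coefficients that under the hypotheses of this proposition are merely measurable and elliptic (no VMO or differentiability is assumed here). Your core step --- testing with $\zeta^2(v-k)_{\pm}$ and integrating by parts to obtain Caccioppoli inequalities and membership in a parabolic De Giorgi class \`a la \cite{LU86} --- requires rewriting $a^{ij}D_iD_jv$ as $D_i(a^{ij}D_jv)-(D_ia^{ij})D_jv$, and the term $(D_ia^{ij})D_jv$ is simply undefined: $a^{ij}$ has no weak derivatives in $x$ (and is only measurable in $t$). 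So the energy machinery cannot be set up, and De Giorgi--Nash--Moser is the wrong tool for this structure; H\"older continuity of strong solutions here must come from measure-theoretic growth lemmas based on the maximum principle, which is what \cite{AN95a} does. A second, independent problem is your ``flux matching'': integrating the bulk equation by parts (even if one could) produces the conormal flux $a^{ij}D_ju\,\mathbf{n}_i\,\zeta^2(v-k)_{\pm}$ on $\Gamma_T$, whereas the Venttsel condition \eqref{V} controls a completely different quantity, namely $\tilde\beta_0\,\partial_{\mathbf{n}}u$ with $\tilde\beta_0=\int_0^1\alpha_{p_i}(\cdot,du+s\mathbf{n}\partial_{\mathbf{n}}u)\mathbf{n}_i\,ds$; since $a^{ij}$ and $\alpha_{p_i}\mathbf{n}_i$ are unrelated, these two boundary terms cannot cancel or absorb one another. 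The sign condition $\alpha_{p_i}\mathbf{n}_i\ge0$ in \eqref{V} is indeed indispensable, but its role is to make the normal-derivative term a drift of favorable sign for the \emph{maximum principle} (as in Proposition~\ref{global-max-principle} and in \eqref{m2}), not to close an energy identity.
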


The key \textit{a~priori} estimate of our approach is the gradient one.

\begin{theorem}\label{Du-est}
Let $\partial\Omega \in \C^{1,1}$ and assume  that conditions \eqref{A1}--\eqref{A5}, \eqref{V}, \eqref{V0}--\eqref{V5} are satisfied.

Then any solution
 $u \in  V_{n+2,n+1}(\mathcal{Q}_T)$ of the problem
\eqref{1.1-AN95}--\eqref{1.3-AN95} fulfills
the estimate
\begin{equation}
\label{grad-est}
\|Du\|_{2(n+2),\mathcal{Q}_T}+\|du\|_{2(n+1),\Gamma_T} \le M_1
\end{equation}
with a constant $M_1$ depending on:
\begin{itemize}
    \item $n$, $\nu$, $T$, $\mathrm{diam}\,\Omega$ and the properties of $\partial\Omega$;
    \item $M_0=\sup_{\mathcal{Q}_T}|u|$ and ${\eta}(M_0)$;
    \item the norms $\|\Phi\|_{n+2,\mathcal{Q}_T}$ and $\|\Theta\|_{n+1,\Gamma_T}$;
    \item the constant $\mu$ and the moduli of continuity of the functions $b$, $\beta_0$ and $\beta$ in the Orlicz spaces defined by conditions \eqref{A5}, \eqref{V0} and \eqref{V5}, respectively;
    \item the $\textit{VMO}_x$-moduli w.r.t. the independent variables and on the moduli of continuity w.r.t. $z$  of the Carath\'eodory functions $a^{ij}(x,t,z)$ and $\alpha^{ij}(x,t,z)$, see conditions \eqref{A2}--\eqref{A3}, \eqref{V2}--\eqref{V3}.
\end{itemize}
\end{theorem}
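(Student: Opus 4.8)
The plan is to view any solution $u$ as solving a \emph{linear} Venttsel problem with frozen principal coefficients, to feed the quadratic source back into the coercive estimate of Theorem~\ref{apriori-estimate}, and to defeat the resulting non-closing feedback by a homotopy argument. The linearisation is legitimate thanks to the regularity already at our disposal: by Proposition~\ref{Holder-est} the solution is H\"older continuous on $\overline{\mathcal Q}_T$ with modulus controlled by $M_0$, $\eta(M_0)$ and the data, so that, by Remark~\ref{same-way}, the frozen coefficients $\hat a^{ij}(x,t):=a^{ij}(x,t,u(x,t))$ and $\hat\alpha^{ij}(x,t):=\alpha^{ij}(x,t,u(x,t))$ lie in $\textit{VMO}_x\cap L^\infty$ with moduli depending only on the admissible quantities. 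Setting $\CL_0:=\partial_t-\hat a^{ij}D_iD_j$ and $\CB_0:=\partial_t-\hat\alpha^{ij}d_id_j$, the pair \eqref{1.1-AN95}--\eqref{1.2-AN95} becomes $\CL_0u=f$ in $\mathcal Q_T$, $\CB_0u=g$ on $\Gamma_T$ with $f:=-a(x,t,u,Du)$ and $g:=-\alpha(x,t,u,Du)$, a linear problem of exactly the form treated in Section~\ref{sec3}.

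Next I would isolate the genuinely nonlinear quantities. Decomposing $Du=du+(\partial_{\mathbf n}u)\mathbf n$ and invoking \eqref{V}, the boundary source splits into a tangential part obeying the quadratic bound \eqref{V4} and a normal part dominated by $\eta(M_0)\,\beta_0\,|\partial_{\mathbf n}u|$; the latter is precisely the $\beta_0\partial_{\mathbf n}u$ contribution handled by the \emph{Munchhausen trick} in the proof of Theorem~\ref{apriori-estimate}, hence it is absorbed through the interior estimate and the trace theorems of \cite{BIN75}. What remains are the \emph{quadratic} gradient terms $\mu|Du|^2$ on $\mathcal Q_T$ and $\mu|du|^2$ on $\Gamma_T$, together with the borderline lower-order terms $b|Du|$, $\beta|du|$, $\Phi$, $\Theta$ carrying the Orlicz integrability \eqref{A5}, \eqref{V0}, \eqref{V5}. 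Applying Theorem~\ref{apriori-estimate} with $p=n+2$, $q=n+1$, using the borderline embeddings $\|Du\|_{r,\mathcal Q_T}\le C\|u\|_{W^{2,1}_{n+2}(\mathcal Q_T)}$ and $\|du\|_{r,\Gamma_T}\le C\|u\|_{W^{2,1}_{n+1}(\Gamma_T)}$ valid for every finite $r$ (in particular $r=2(n+2)$ and $r=2(n+1)$), and treating the lower-order terms by the compact-operator splitting of Step~1 of that proof, I obtain for $G:=\|Du\|_{2(n+2),\mathcal Q_T}+\|du\|_{2(n+1),\Gamma_T}$ a feedback inequality $G\le A\,G^2+B\,G+D$, with $A$ proportional to $\mu\,\eta(M_0)$ and $D$ governed by $\|\Phi\|_{n+2,\mathcal Q_T}$ and $\|\Theta\|_{n+1,\Gamma_T}$.

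A purely quadratic inequality does not bound $G$ by itself, and overcoming this is the heart of the matter. Here I would deploy the homotopy-type argument of Amann--Crandall \cite{AC78}, adapted to the Venttsel setting as in \cite{Sof03}: introduce a parameter $\sigma\in[0,1]$ scaling the nonlinear terms to $(\sigma a,\sigma\alpha)$, so that $\sigma=0$ forces $u_\sigma\equiv0$ by the comparison principle (cf.\ Proposition~\ref{global-max-principle}) and $\sigma=1$ restores the original problem, while the $L^\infty$ and H\"older bounds stay uniform along the family because they are monotone in the quadratic constant $\sigma\mu\le\mu$. For the scaled problem the previous step gives $G_\sigma\le A\sigma\,G_\sigma^2+B\sigma\,G_\sigma+\sigma D$, whose smaller root tends to $0$ as $\sigma\to0$ and is the only value compatible with the bound for small $\sigma$. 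The qualitative finiteness of $G_\sigma$, furnished by $u_\sigma\in V_{n+2,n+1}(\mathcal Q_T)$ together with the borderline embedding, and the continuity of $\sigma\mapsto u_\sigma$, then allow a connectedness (open--closed in $\sigma$) argument to propagate the lower-branch bound from $\sigma=0$ up to $\sigma=1$, yielding $G=G_1\le M_1$.

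The main obstacle, and the point demanding the greatest care, is exactly the quadratic gradient growth: one must guarantee that the continuous curve $\sigma\mapsto G_\sigma$ never crosses from the controlled lower branch to the divergent upper one, which forces \emph{all} constants — notably $A\propto\mu\eta(M_0)$ and the embedding constants — to be kept uniform in $\sigma$. It is here that the precise exponents $2(n+2)$ and $2(n+1)$, matched to the borderline embeddings from $W^{2,1}_{n+2}$ and $W^{2,1}_{n+1}$, and the logarithmic refinements \eqref{A5}, \eqref{V0}, \eqref{V5}, are indispensable to close the \emph{marginal} estimates for $b|Du|$ and $\beta|du|$; should the quadratic constant threaten to close the gap, a preliminary change of unknown $w=(e^{\lambda u}-1)/\lambda$, with $\lambda$ chosen via \eqref{A1} and \eqref{V1} so that $\lambda\nu\ge\mu\eta(M_0)$, absorbs $\mu|Du|^2$ into the principal part and reduces the growth to linear, after which the homotopy closes unconditionally. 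Once $G\le M_1$ is established, $f\in L^{n+2}(\mathcal Q_T)$ and $g\in L^{n+1}(\Gamma_T)$ with controlled norms, and a final application of Theorem~\ref{apriori-estimate} upgrades the bound to the full $V_{n+2,n+1}(\mathcal Q_T)$ norm of $u$, completing the proof of \eqref{grad-est}.
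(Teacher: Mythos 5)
Your opening move (freezing the coefficients along the given solution via Proposition~\ref{Holder-est} and Remark~\ref{same-way}, then reading \eqref{1.1-AN95}--\eqref{1.2-AN95} as a linear Venttsel problem) coincides with the paper's, and you correctly identify that a raw application of Theorem~\ref{apriori-estimate} only yields a non-closing quadratic feedback $G\le AG^2+BG+D$. But the homotopy you then build is the wrong one, and the gap it creates is genuine. You scale the \emph{nonlinearities} to $(\sigma a,\sigma\alpha)$ and run an open--closed argument along a ``curve'' $\sigma\mapsto u_\sigma$. No such curve is available: the $\sigma$-problems are quasilinear problems whose solvability is exactly what Theorem~\ref{quasilinear-existence} will later establish \emph{using} the present theorem, and for which no uniqueness or continuous dependence on $\sigma$ is known, so neither the existence of a branch with $u_1=u$ nor the continuity of $\sigma\mapsto G_\sigma$ can be asserted --- the argument is circular. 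Moreover, an a~priori estimate must hold for \emph{every} solution in $V_{n+2,n+1}(\mathcal{Q}_T)$, not only for those lying on a branch emanating from $0$. Finally, even granting the branch, the branch-separation (lower root vs.\ upper root) fails near $\sigma=1$ whenever $4AD\ge(1-B)^2$, i.e.\ precisely for large data; and your fallback, the substitution $w=(e^{\lambda u}-1)/\lambda$, only gives \emph{one-sided} control of the transformed source (it is the standard trick behind sup-bounds such as Theorem~\ref{M0}, part 1), which is useless for a two-sided $L^p$ bound of the right-hand side that a $W^{2,1}_p$ coercive estimate requires.

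What the paper (following Amann--Crandall) actually does is structurally different in the decisive step: the homotopy \eqref{VP1-delta}--\eqref{VP3-delta} keeps the frozen coefficients \emph{and the quadratic terms at full strength} and scales only the inhomogeneities $\delta\tilde f$, $\delta\tilde g$. For this family the comparison principle (Proposition~\ref{global-max-principle}) gives both uniqueness (Corollary~\ref{cor-uniqueness}, which is what forces $v_1=u$) and the $L^\infty$ increment bound \eqref{36}, $\|v_{\delta_1}-v_{\delta_2}\|_\infty\le(\delta_2-\delta_1)(M_0+\eta(M_0))$. The quadratic terms are then defeated not by branch-tracking but by Solonnikov's interpolation inequality \eqref{Dw},
\begin{equation*}
\|Dw\|^2_{2(n+2),\mathcal{Q}_T}\le C\big(\|D^2w\|_{n+2,\mathcal{Q}_T}+\|w\|_{\infty,\mathcal{Q}_T}\big)\|w\|_{\infty,\mathcal{Q}_T},
\end{equation*}
applied to the \emph{difference} $w=v_{\delta_1}-v_{\delta_2}$: combined with \eqref{36}, it makes every quadratic contribution carry the small factor $\delta_2-\delta_1$, so that Theorem~\ref{apriori-estimate} closes into the increment estimate \eqref{39} of Lemma~\ref{lemma-grad} once $\delta_2-\delta_1\le\varkappa$ with $\varkappa$ depending only on admissible data. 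Solvability at each discrete step $\delta=k\varkappa$ is obtained by Leray--Schauder, and finitely many iterations of \eqref{39} starting from $v_0\equiv0$ give \eqref{grad-est} --- no continuity in the parameter is ever needed. This pair of ingredients (maximum-principle bound for increments plus the Gagliardo--Nirenberg interpolation converting $L^\infty$ smallness into gradient smallness) is the missing idea in your proposal; without it, your quadratic feedback inequality cannot be closed.
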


\begin{remark}
We point out that for any $u \in  V_{n+2,n+1}(\mathcal{Q}_T)$ the norms in~\eqref{grad-est} are finite by the embedding theorem, and Theorem~\ref{Du-est} deals only with {\rm a priori} estimate.
\end{remark}

\begin{proof}
To derive the estimate \eqref{grad-est} we will apply a homotopy argument due to Amann and Crandall \cite{AC78} that has already been  used in \cite{ANPS21} in the study of quasilinear elliptic Venttsel problems.

For the sake of brevity, we define hereafter
\begin{align}
\tilde{a}^{ij}(x,t):=&\ {a}^{ij}(x,t,u(x,t)),
\label{tilde-aij}\\ 
 \tilde{a}(x,t):=&\ \dfrac{a(x,t,u(x,t),Du(x,t))}{\mu|Du(x,t)|^2+b(x,t)|Du(x,t)|+\Phi(x,t)}, \label{tilde-a}\\
\tilde{b}^i(x,t):=&\ 
b(x,t)\tilde{a}(x,t)\, \dfrac{D_iu(x,t)}{|Du(x,t)|}
\label{tilde-b}
\end{align}
(recall that we set $\frac{0}{0}=0$, if such an uncertainty occurs)
and note that the assumption \eqref{A1} implies immediately the uniform ellipticity condition \eqref{L2} for $\tilde{a}^{ij}$.

Further, we make use of the hypotheses \eqref{A2}, \eqref{A3} and employ Remark~\ref{same-way} in order to get that the $\textit{VMO}_x$-moduli of $\tilde{a}^{ij}(x,t)$ are controlled in terms of $M_0=\sup_{\mathcal{Q}_T}|u|$ and the modulus of continuity of $u$. Further, Proposition~\ref{Holder-est} provides estimates 
 the continuity modulus of $u$ in terms of $M_0$ and therefore
 $\tilde{a}^{ij}$ satisfy \eqref{L1}. 
Moreover, $|\tilde a|\le {\eta}(M_0)$ in view of \eqref{A4}
while $\tilde b^i$ verify \eqref{L3} as consequence of \eqref{A5}.

This way, the equation \eqref{1.1-AN95} can be rewritten as
\begin{multline}
\label{VP1}
\partial_tu-\tilde{a}^{ij}(x,t)D_iD_ju+\tilde{b}^i(x,t)D_iu
\\
+\mu\tilde{a}(x,t)|Du|^2+\Phi(x,t) u
=\tilde{f}(x,t)\quad \text{a.e. in}\  \mathcal{Q}_T
\end{multline}
with
$$
\tilde{f}(x,t):= \Phi(x,t)\big(u(x,t)-\tilde{a}(x,t)\big)\in L^{n+2}(\mathcal{Q}_T).
$$

Similarly, we define
\begin{align}
\label{tilde-alpij}
\tilde{\alpha}^{ij}(x,t):=&\ {\alpha}^{ij}(x,t,u(x,t)), \\ 
\label{tilde-alpha}
 \tilde{\alpha}(x,t):=&\ \dfrac{\alpha(x,t,u(x,t),du(x,t))}{\mu|du(x,t)|^2+\beta(x,t)|du(x,t)|+\Theta(x,t)},  
 \end{align}
\begin{align}
\label{t-beta-prime} 
\tilde{\beta}^{*i}(x,t):=&\ 
\beta(x,t)\tilde{\alpha}(x,t)\, \dfrac{d_iu(x,t)}{|du(x,t)|}, \\
\label{beta0u}
\tilde\beta_0(x,t):= &\ \int\limits_0^1 \alpha_{p_i}\big(x,t,u(x,t),du(x,t)+s{\mathbf{n}}\partial_{\mathbf{n}}u(x,t)\big)\mathbf{n}_i(x)\,ds 
\end{align}
and note that  the assumptions \eqref{V1}--\eqref{V3} and Remark~\ref{same-way} imply the conditions \eqref{B1}--\eqref{B2} for $\tilde{\alpha}^{ij}$. Further on, $|\tilde \alpha|\le {\eta}(M_0)$ by \eqref{V4}, $\tilde\beta^{*i}$ satisfy \eqref{B3} because of \eqref{V5}, while \eqref{V} implies
$0\le \tilde\beta_0(x,t)\le {\eta}(M_0)\beta_0(x,t)$
for a.a. $(x,t)\in\Gamma_T$. 
Moreover,
$$
\alpha(x,t,u,Du)=\alpha(x,t,u,du)+\tilde\beta_0(x,t)\partial_{\mathbf{n}}u
$$
and the boundary equation \eqref{1.2-AN95} takes on the form
\begin{multline}
\label{VP2}
\partial_tu-\tilde{\alpha}^{ij}(x,t)d_id_ju
+ \tilde\beta^{*i}(x,t)d_iu+\tilde\beta_0(x,t)\partial_{\mathbf{n}}u\\
+ \mu\tilde{\alpha}(x,t)|du(x,t)|^2 +\Theta(x,t) u=\tilde{g}(x,t)
\quad \text{a.e. on}\ \Gamma_T,
\end{multline}
where $\tilde{g}(x,t):= \Theta(x,t) \big(u(x,t)-\tilde\alpha(x,t)\big) \in L^{n+1}(\Gamma_T)$. 
\smallskip

Consider now 
the one-parameter family of parabolic Venttsel problems
\begin{align}
\label{VP1-delta}
\partial_tv_\delta-\tilde{a}^{ij} D_{i}D_jv_\delta + \tilde{b}^i D_iv_\delta & + \mu\tilde{a}|Dv_\delta|^2+\Phi v_\delta=\delta\tilde{f} \quad \text{a.e. in}\ \mathcal{Q}_T,\\
\nonumber
\partial_tv_\delta-\tilde{\alpha}^{ij}d_{i}d_jv_\delta + \tilde\beta^{*i} d_iv_\delta  & +\tilde\beta_0\partial_{\mathbf{n}}v_\delta
 \\
\label{VP2-delta}
& + \mu\tilde{\alpha}|dv_\delta|^2+\Theta v_\delta
=\delta\tilde{g} \quad \text{a.e. on}\ \Gamma_T,\\
\label{VP3-delta}
&\qquad \qquad \ \quad  v_{\delta}\big|_{t=0}=0 \quad \ \text{in}\ \overline{\Omega}.
\end{align}

Following the strategy of \cite{AC78}, we will prove unique solvability of  \eqref{VP1-delta}--\eqref{VP3-delta}  in $V_{n+2,n+1}(\mathcal{Q}_T)$
 for each $\delta\in[0,1]$ and will estimate the gradient of $v_{\delta_2}$ in terms of  the gradient of
$v_{\delta_1}$ for small enough $\delta_2-\delta_1>0$. Then we will easily have $v_0\equiv0$, while for $\delta=1$ the coincidence of \eqref{VP1-delta} with \eqref{VP1} and \eqref{VP2-delta} with \eqref{VP2}   would give $v_1=u$. So, finite iteration in $\delta$ will  give the desired bound \eqref{grad-est}. 

To realize that plan, we need two lemmata.
\begin{lemma}
\label{lemma-L-infty}
Suppose that $v_{\delta_1},v_{\delta_2}\in V_{n+2,n+1}(\mathcal{Q}_T)$ solve \eqref{VP1-delta}--\eqref{VP3-delta} with $\delta_1\le\delta_2$.

 Then
\begin{equation}
\label{36}
\|v_{\delta_1}-v_{\delta_2}\|_{\infty,\Omega}
\le (\delta_2-\delta_1)\big(M_0+{\eta}(M_0)\big)
\end{equation}
(recall that $M_0=\sup_{\mathcal{Q}_T}|u|$).
\end{lemma}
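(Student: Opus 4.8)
The plan is to deduce \eqref{36} from a comparison principle for the quasilinear Venttsel problem \eqref{VP1-delta}--\eqref{VP2-delta}, exploiting that its right-hand sides depend affinely on $\delta$ while the zeroth-order coefficients have a favorable sign. Write $K:=M_0+\eta(M_0)$ and $m:=(\delta_2-\delta_1)K\ge0$. First I would record two preliminary facts. Evaluating \eqref{A4} at $p=0$ and \eqref{V4} at $p^*=0$ (where the right-hand sides must still dominate the nonnegative left-hand sides) forces $\Phi\ge0$ a.e.\ in $\mathcal{Q}_T$ and $\Theta\ge0$ a.e.\ on $\Gamma_T$. Moreover, since $|u|\le M_0$ while $|\tilde a|\le\eta(M_0)$ and $|\tilde\alpha|\le\eta(M_0)$ (already noted after \eqref{tilde-a} and \eqref{tilde-alpha}), one has $|u-\tilde a|\le K$ in $\mathcal{Q}_T$ and $|u-\tilde\alpha|\le K$ on $\Gamma_T$; recalling $\tilde f=\Phi(u-\tilde a)$ and $\tilde g=\Theta(u-\tilde\alpha)$, this yields $|\tilde f|\le\Phi K$ and $|\tilde g|\le\Theta K$.

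Next I would verify that $v_{\delta_1}+m$ is a supersolution of the $\delta_2$-problem. As $m$ is constant, $D(v_{\delta_1}+m)=Dv_{\delta_1}$, $d(v_{\delta_1}+m)=dv_{\delta_1}$ and $\partial_{\mathbf{n}}(v_{\delta_1}+m)=\partial_{\mathbf{n}}v_{\delta_1}$, so substituting $v_{\delta_1}+m$ into the left-hand sides of \eqref{VP1-delta} and \eqref{VP2-delta} and invoking the equations satisfied by $v_{\delta_1}$ produces $\delta_1\tilde f+\Phi m$ in the interior and $\delta_1\tilde g+\Theta m$ on the boundary. Because $\Phi\ge0$ and $\Phi m=(\delta_2-\delta_1)\Phi K\ge(\delta_2-\delta_1)\Phi(u-\tilde a)=(\delta_2-\delta_1)\tilde f$, the interior output is $\ge\delta_2\tilde f$; likewise $\delta_1\tilde g+\Theta m\ge\delta_2\tilde g$ on $\Gamma_T$, and $(v_{\delta_1}+m)\big|_{t=0}=m\ge0$. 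Using instead $\tilde a-u\le K$ and $\tilde\alpha-u\le K$, the identical computation shows that $v_{\delta_2}+m$ is a supersolution of the $\delta_1$-problem.

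It remains to justify the comparison principle itself: a supersolution dominating a solution at $t=0$ dominates it throughout $\mathcal{Q}_T$. I would prove this by linearizing the difference $W$ between the supersolution and the solution. The only nonlinearity being the quadratic terms, the identity $|D\phi|^2-|D\psi|^2=(D\phi+D\psi)\cdot D(\phi-\psi)$ converts them into first-order terms, so $W$ satisfies
\[
\partial_tW-\tilde a^{ij}D_iD_jW+\hat b^iD_iW+\Phi W\ge0 \quad\text{in } \mathcal{Q}_T,
\]
together with the analogous boundary inequality carrying the tangential drift $\hat\beta^{*i}d_iW$, the normal term $\tilde\beta_0\partial_{\mathbf{n}}W$ with $\tilde\beta_0\ge0$, and the zeroth-order coefficient $\Theta\ge0$; here $\hat b^i=\tilde b^i+\mu\tilde a(D_iv_{\delta_1}+D_iv_{\delta_2})$ and $\hat\beta^{*i}=\tilde\beta^{*i}+\mu\tilde\alpha(d_iv_{\delta_1}+d_iv_{\delta_2})$. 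Since $v_{\delta_1},v_{\delta_2}\in V_{n+2,n+1}(\mathcal{Q}_T)$, the embedding theorem places their gradients in $L^{2(n+2)}(\mathcal{Q}_T)$ and their tangential gradients in $L^{2(n+1)}(\Gamma_T)$, whence $|\hat{\mathbf{b}}|\in L^{n+1}(\mathcal{Q}_T)$ and, together with $\tilde\beta_0\le\eta(M_0)\beta_0$, the full boundary drift lies in $L^{n}(\Gamma_T)$, while $\Phi_-=\Theta_-=0$. Applying Proposition~\ref{global-max-principle} to $-W$, whose interior and boundary data have vanishing positive parts and whose initial value is $\le0$, gives $\sup_{\mathcal{Q}_T}(-W)\le0$, i.e. $W\ge0$.

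Combining the two comparisons produces $v_{\delta_2}\le v_{\delta_1}+m$ and $v_{\delta_1}\le v_{\delta_2}+m$, hence $|v_{\delta_1}-v_{\delta_2}|\le m=(\delta_2-\delta_1)K$ throughout $\mathcal{Q}_T$, which is precisely \eqref{36}. I expect the comparison principle to be the main obstacle, and within it the delicate point is checking that, once the quadratic gradient nonlinearity has been linearized, the resulting first-order coefficients $\hat{\mathbf{b}}$ and $\hat\beta^*$ still belong to the Lebesgue classes $L^{n+1}$ and $L^{n}$ required by Proposition~\ref{global-max-principle}; this is exactly where the higher integrability of the gradients afforded by the space $V_{n+2,n+1}(\mathcal{Q}_T)$ is indispensable.
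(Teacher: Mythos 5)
Your proof is correct and follows essentially the same route as the paper's: there the difference $w=v_{\delta_1}-v_{\delta_2}$ is formed directly, the quadratic terms are linearized into the drifts $\tilde a^i=\mu\tilde a\big(D_iv_{\delta_1}+D_iv_{\delta_2}\big)$ and $\tilde\alpha^i=\mu\tilde\alpha\big(d_iv_{\delta_1}+d_iv_{\delta_2}\big)$ (your $\hat b^i-\tilde b^i$ and $\hat\beta^{*i}-\tilde\beta^{*i}$), the bounds $\tilde f\ge-\Phi\big(M_0+\eta(M_0)\big)$, $\tilde g\ge-\Theta\big(M_0+\eta(M_0)\big)$ are used to compare $w$ with the constant $(\delta_2-\delta_1)\big(M_0+\eta(M_0)\big)$ under the linear operators $\tilde{\mathcal L}$, $\tilde{\mathcal B}$, and the conclusion comes from Proposition~\ref{global-max-principle}. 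Your supersolution/comparison packaging, together with the sign observations $\Phi,\Theta\ge0$, $\tilde\beta_0\ge0$ and the integrability check on the linearized drifts via the embedding, is the same argument in different clothing.
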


\begin{proof}
Setting $w=v_{\delta_1}-v_{\delta_2}$ we obtain the following \textit{linear} parabolic Venttsel problem 
\begin{align*}
\tilde{\mathcal{L}}w:= \partial_tw-\tilde{a}^{ij}D_iD_jw + (\tilde{a}^i+\tilde{b}^i)D_iw & +\Phi w= (\delta_1-\delta_2)\tilde{f} \quad \text{a.e. in}\ \mathcal{Q}_T,\\
\tilde{\mathcal{B}}w:= \partial_tw-\tilde{\alpha}^{ij}d_id_jw +
(\tilde{\alpha}^i+\tilde\beta^{*i})d_iw & +\tilde\beta_0\partial_{\mathbf{n}}w\\
& +\Theta w =(\delta_1-\delta_2)\tilde{g} \quad \text{a.e. on}\ \Gamma_T
\end{align*}
with homogeneous initial data. Here
\begin{align*}
\tilde{a}^i(x,t):=&\ \mu\tilde{a}(x,t)\big(D_iv_{\delta_1}(x,t)+D_iv_{\delta_2}(x,t)\big),\\
\tilde{\alpha}^i(x,t):=&\ \mu\tilde{\alpha}(x,t)\big(d_iv_{\delta_1}(x,t)+d_iv_{\delta_2}(x,t)\big).
\end{align*}
We recall that $|\tilde{a}|\le{\eta}(M_0)$ and $v_{\delta_1},v_{\delta_2}\in W^{2,1}_{n+2}(\mathcal{Q}_T)$, thus $\tilde{a}^i\in L^{n+2}(\mathcal{Q}_T)$. Similarly, $\tilde{\alpha}^i\in L^{n+1}(\Gamma_T)$.

Using $\tilde{f}(x,t)\ge -\Phi(x,t)\big(M_0+{\eta}(M_0)\big)$ we get
$$
\tilde{\mathcal{L}}w\le 
\tilde{\mathcal{L}}\big((\delta_2-\delta_1)(M_0+{\eta}(M_0))\big)
\qquad \text{a.e. in}\ \mathcal{Q}_T
$$
and similarly
$$
\tilde{\mathcal{B}}w\le 
\tilde{\mathcal{B}}\big((\delta_2-\delta_1)(M_0+{\eta}(M_0))\big)\qquad \text{a.e. on}\ \Gamma_T.
$$
It follows from Proposition~\ref{global-max-principle} that
$$
w(x,t)\le (\delta_2-\delta_1)\big(M_0+{\eta}(M_0)\big)\qquad \text{in}\ \mathcal{Q}_T.
$$
In the same manner the lower estimate $w(x,t)\ge -(\delta_2-\delta_1)\big(M_0+{\eta}(M_0)\big)$ follows and this gives the claim \eqref{36}.
\end{proof}

It is worth noting that setting $\delta_1=\delta_2$ in \eqref{36}, we get immediately $v_{\delta_1}\equiv v_{\delta_2}$ and thus uniqueness of solutions to \eqref{VP1-delta}--\eqref{VP3-delta}. 
Precisely,
\begin{cor}
\label{cor-uniqueness}
The problem \eqref{VP1-delta}--\eqref{VP3-delta} cannot have more than one solution in $V_{n+2,n+1}(\mathcal{Q}_T)$ for any $\delta\in[0,1]$.
\end{cor}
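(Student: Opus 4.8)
The plan is to reduce the uniqueness assertion entirely to the $L^\infty$-difference estimate of Lemma~\ref{lemma-L-infty}, which has already been established. The key observation is that \eqref{36} measures the distance between solutions attached to two (possibly distinct) parameters $\delta_1\le\delta_2$ by the factor $\delta_2-\delta_1$; choosing the two parameters equal collapses this factor to zero, and hence forces the two solutions to coincide.

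Concretely, I would fix $\delta\in[0,1]$ and suppose that $v,\hat v\in V_{n+2,n+1}(\mathcal{Q}_T)$ both solve \eqref{VP1-delta}--\eqref{VP3-delta} for that same value of $\delta$. I would then regard them as admissible inputs for Lemma~\ref{lemma-L-infty} with $\delta_1=\delta_2=\delta$: the ordering hypothesis $\delta_1\le\delta_2$ holds trivially with equality, and both functions lie in the required space $V_{n+2,n+1}(\mathcal{Q}_T)$. The estimate \eqref{36} then reads
$$
\|v-\hat v\|_{\infty,\Omega}\le(\delta-\delta)\big(M_0+\eta(M_0)\big)=0,
$$
so that $v=\hat v$ a.e., which is precisely the claimed uniqueness.

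There is essentially no genuine obstacle at this stage, since all of the analytic content has already been carried out inside the proof of Lemma~\ref{lemma-L-infty}: the difference $w=v_{\delta_1}-v_{\delta_2}$ is linearized into a linear Venttsel problem whose drift coefficients $\tilde a^i$ and $\tilde\alpha^i$ absorb the quadratic gradient terms, and the problem is controlled through the global maximum principle of Proposition~\ref{global-max-principle}. The only point I would explicitly verify is that putting $\delta_1=\delta_2$ genuinely annihilates the right-hand sides $(\delta_1-\delta_2)\tilde f$ and $(\delta_1-\delta_2)\tilde g$ of that linear problem; once they vanish, the homogeneous maximum principle yields $w\equiv0$. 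Thus the corollary is an immediate specialization of the lemma rather than an independent argument.
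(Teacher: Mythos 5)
Your proposal is correct and coincides with the paper's own argument: the paper proves the corollary precisely by setting $\delta_1=\delta_2$ in the estimate \eqref{36} of Lemma~\ref{lemma-L-infty}, which forces $v_{\delta_1}\equiv v_{\delta_2}$. Your additional check that the right-hand sides $(\delta_1-\delta_2)\tilde f$ and $(\delta_1-\delta_2)\tilde g$ vanish is exactly what makes the lemma applicable in this degenerate case, so nothing is missing.
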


\begin{lemma}\label{lemma-grad}
Under the hypotheses of Lemma $\ref{lemma-L-infty}$, there is a $\varkappa>0$ such that the inequality $\delta_2-\delta_1\le\varkappa$ implies
\begin{multline}
\label{39}
\|Dv_{\delta_2}-Dv_{\delta_1}\|_{2(n+2),\mathcal{Q}_T} 
+ \|dv_{\delta_2}-dv_{\delta_1}\|_{2(n+1),\Gamma_T}\\ \le\ C_3 (\delta_2-\delta_1) \Big(1+\|Dv_{\delta_1}\|_{2(n+2),\mathcal{Q}_T} + \|dv_{\delta_1}\|_{2(n+1),\Gamma_T}\Big).
\end{multline}
The constants $\varkappa$ and $C_3$ depend on the same quantities as $M_1$ in the statement of Theorem~$\ref{Du-est}$.
\end{lemma}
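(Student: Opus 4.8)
The plan is to set $w=v_{\delta_2}-v_{\delta_1}$ and reuse the \emph{linear} Venttsel problem for $w$ already derived in the proof of Lemma~\ref{lemma-L-infty}, namely $\tilde{\mathcal L}w=(\delta_2-\delta_1)\tilde f$ in $\mathcal{Q}_T$ and $\tilde{\mathcal B}w=(\delta_2-\delta_1)\tilde g$ on $\Gamma_T$, with homogeneous initial data. The decisive point is how to distribute the first- and zeroth-order terms. The genuine lower-order coefficients $\tilde b^i,\Phi$ of $\tilde{\mathcal L}$ and $\tilde\beta^{*i},\tilde\beta_0,\Theta$ of $\tilde{\mathcal B}$ satisfy the hypotheses \eqref{L3}, \eqref{L4} and \eqref{B3}--\eqref{B4} for the admissible pair $p=n+2$, $q=n+1$ (permitted in \eqref{pq-cond}), \emph{with moduli of continuity controlled purely by the data}, so they may be kept inside the operators. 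By contrast the additional drift terms $\tilde a^i=\mu\tilde a\,(D_iv_{\delta_1}+D_iv_{\delta_2})$ and $\tilde\alpha^i=\mu\tilde\alpha\,(d_iv_{\delta_1}+d_iv_{\delta_2})$ lie only in $L^{2(n+2)}(\mathcal{Q}_T)$ resp. $L^{2(n+1)}(\Gamma_T)$, and their norms are governed by the still-undetermined quantities $G_i:=\|Dv_{\delta_i}\|_{2(n+2),\mathcal{Q}_T}+\|dv_{\delta_i}\|_{2(n+1),\Gamma_T}$; keeping them inside the operators would make the constant $C_1$ of Theorem~\ref{apriori-estimate} depend on $G_1+G_2$, so they must instead be transferred to the right-hand side. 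Applying Theorem~\ref{apriori-estimate} to $w$ then yields
\[
\|w\|_{V_{n+2,n+1}(\mathcal{Q}_T)}\le C_1\Big((\delta_2-\delta_1)\big(\|\tilde f\|_{n+2,\mathcal{Q}_T}+\|\tilde g\|_{n+1,\Gamma_T}\big)+\|\tilde a^iD_iw\|_{n+2,\mathcal{Q}_T}+\|\tilde\alpha^id_iw\|_{n+1,\Gamma_T}\Big).
\]

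Next I would bound the two transferred terms by H\"older's inequality, using $|\tilde a|,|\tilde\alpha|\le\eta(M_0)$:
\[
\|\tilde a^iD_iw\|_{n+2,\mathcal{Q}_T}\le \mu\eta(M_0)\,(G_1+G_2)\,\|Dw\|_{2(n+2),\mathcal{Q}_T},
\]
and analogously for $\|\tilde\alpha^id_iw\|_{n+1,\Gamma_T}$ with $\|dw\|_{2(n+1),\Gamma_T}$. The difficulty is now apparent: the factor $G_2=\|Dv_{\delta_2}\|_{2(n+2)}+\|dv_{\delta_2}\|_{2(n+1)}$ is exactly one of the norms we wish to control, so a naive absorption would force the homotopy threshold to depend on the uncontrolled gradient of $v_{\delta_2}$. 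The whole purpose of the Amann--Crandall device is to break this circularity by exploiting that $w$ is small in $L^\infty$.

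The crux is the Gagliardo--Nirenberg interpolation inequality for functions vanishing at $t=0$ (see \cite{BIN75}), which at the critical exponents $p=n+2$ on $\mathcal{Q}_T$ and $q=n+1$ on $\Gamma_T$ takes the balanced form
\[
\|Dw\|_{2(n+2),\mathcal{Q}_T}\le C\,\|w\|_{W^{2,1}_{n+2}(\mathcal{Q}_T)}^{1/2}\,\|w\|_{\infty,\mathcal{Q}_T}^{1/2},\qquad
\|dw\|_{2(n+1),\Gamma_T}\le C\,\|w\|_{W^{2,1}_{n+1}(\Gamma_T)}^{1/2}\,\|w\|_{\infty,\Gamma_T}^{1/2}.
\]
Combining these with the $L^\infty$ bound $\|w\|_{\infty,\mathcal{Q}_T}\le(\delta_2-\delta_1)(M_0+\eta(M_0))$ from Lemma~\ref{lemma-L-infty}, each transferred term becomes a product of $(G_1+G_2)$, the small factor $(\delta_2-\delta_1)^{1/2}$, and $\|w\|_{V_{n+2,n+1}}^{1/2}$. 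Feeding this back into the a~priori estimate and applying Young's inequality to absorb the half-power $\|w\|_{V_{n+2,n+1}}^{1/2}$ into the left-hand side, I would obtain
\[
\|w\|_{V_{n+2,n+1}(\mathcal{Q}_T)}\le C\,(\delta_2-\delta_1)\big(1+(G_1+G_2)^2\big).
\]
Reinserting the interpolation bound $W:=\|Dw\|_{2(n+2),\mathcal{Q}_T}+\|dw\|_{2(n+1),\Gamma_T}\le C\,\|w\|_{V_{n+2,n+1}}^{1/2}(\delta_2-\delta_1)^{1/2}$ and taking square roots (via $\sqrt{1+s^2}\le 1+s$) gives $W\le C\,(\delta_2-\delta_1)\,(1+G_1+G_2)$.

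Finally I would invoke the triangle inequality $G_2\le G_1+W$ to eliminate $G_2$, arriving at $W\le C(\delta_2-\delta_1)(1+G_1)+C(\delta_2-\delta_1)W$; choosing $\varkappa$ so small that $C\varkappa\le\tfrac12$ lets the last summand be absorbed and yields \eqref{39}, with $C_3$ and $\varkappa$ depending only on the constant $C_1$ of Theorem~\ref{apriori-estimate}, on $M_0$, $\eta(M_0)$, the norms $\|\Phi\|_{n+2}$, $\|\Theta\|_{n+1}$ entering $\tilde f,\tilde g$, the constant $\mu$, and the interpolation constants --- precisely the quantities listed for $M_1$ in Theorem~\ref{Du-est}, and crucially \emph{not} on $G_1$ or $G_2$. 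The main obstacle is exactly the circular appearance of the uncontrolled norm $G_2$ inside the drift $\tilde a^i$; the interpolation step resolves it by trading a full power of $\|Dw\|_{2(n+2)}$ for the small factor $\|w\|_{\infty}^{1/2}\sim(\delta_2-\delta_1)^{1/2}$, thereby rendering the homotopy step $\varkappa$ uniform in the solution and enabling the finite iteration from $v_0\equiv0$ to $v_1=u$.
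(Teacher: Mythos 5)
Your proof is correct and is essentially the paper's own argument: the same decomposition (quadratic gradient terms moved to the right-hand side of the linear problem \eqref{hat-L}--\eqref{hat-B}, the genuine lower-order coefficients $\tilde b^i,\Phi,\tilde\beta^{*i},\tilde\beta_0,\Theta$ kept inside the operators), the same application of Theorem~\ref{apriori-estimate} with $p=n+2$, $q=n+1$, and the same key device of trading $\|Dw\|_{2(n+2),\mathcal{Q}_T}$ and $\|dw\|_{2(n+1),\Gamma_T}$ for the $L^\infty$-smallness of Lemma~\ref{lemma-L-infty} via the anisotropic Gagliardo--Nirenberg inequality, followed by absorption. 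The only deviation is bookkeeping: the paper expands $|Dv_{\delta_1}|^2-|Dv_{\delta_2}|^2=2\,Dv_{\delta_1}\cdot Dw-|Dw|^2$ so that only $\|Dv_{\delta_1}\|_{2(n+2),\mathcal{Q}_T}$ ever appears and Cauchy's inequality suffices, whereas you keep the factor $\|Dv_{\delta_1}\|+\|Dv_{\delta_2}\|$ from H\"older and eliminate $\|Dv_{\delta_2}\|$ only at the very end through the triangle inequality and one further absorption --- an equally valid variant with the same constant dependencies.
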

\begin{proof} We rewrite the problem for
$w=v_{\delta_1}-v_{\delta_2}$ as follows:
\begin{align}
\label{hat-L}
\widehat{\mathcal{L}}w:=&\
\partial_tw-\tilde{a}^{ij}D_iD_jw + \tilde{b}^iD_iw+\Phi w= \widehat{f} &\text{a.e. in}\ \mathcal{Q}_T,\\
\label{hat-B}
\widehat{\mathcal{B}}w:=&\
 \partial_tw-\tilde{\alpha}^{ij}d_id_jw +
\tilde\beta^{*i}d_iw +\tilde\beta_0\partial_{\mathbf{n}}w+\Theta w =\widehat{g} &\text{a.e. on}\ \Gamma_T,
\end{align}
with homogeneous initial data. Here
\begin{align*}
\widehat f= &\ (\delta_1-\delta_2)\tilde{f}-\mu
\tilde{a}\big(|Dv_{\delta_1}|^2-|Dv_{\delta_2}|^2\big)\in L^{n+2}(\mathcal{Q}_T),\\
\widehat g = &\ 
(\delta_1-\delta_2)\tilde{g}-\mu
\tilde{\alpha}\big(|dv_{\delta_1}|^2-|dv_{\delta_2}|^2\big)\in L^{n+1}(\Gamma_T).
\end{align*}
Theorem~\ref{apriori-estimate} yields
\begin{equation}
\label{37}
\|w\|_{V_{n+2,n+1}(\Omega)}\le N_9 \left( 
\big\|\widehat{f}\big\|_{n+2,\mathcal{Q}_T}+
\big\|\widehat{g}\big\|_{n+1,\Gamma_T}\right),
\end{equation}
where $N_9$ depends only on $n$, $\nu$, $\mathrm{diam}\,\Omega$, $T$, the properties of $\partial\Omega$, $\eta(M_0)$, the norms $\|\Phi\|_{n+2,\mathcal{Q}_T}$ and $\|\Theta\|_{n+1,\Gamma_T}$, the moduli of continuity of the functions $b$, $\beta_0$ and $\beta$ in the corresponding Orlicz spaces defined by conditions \eqref{A5}, \eqref{V0} and \eqref{V5}, respectively, and on the $\textit{VMO}_x$-moduli of the coefficients $\tilde{a}^{ij}(x,t)$  and $\tilde{\alpha}^{ij}(x,t)$. 

However, as explained before, the 
 $\textit{VMO}_x$-moduli of $\tilde{a}^{ij}(x,t)$  and $\tilde{\alpha}^{ij}(x,t)$ are controlled in terms of $M_0$ through Remark~\ref{same-way} and Proposition~\ref{Holder-est}. Thus, the constant $N_9$ in \eqref{37} depends only on data listed in the statement of Theorem~\ref{Du-est}.

Taking advantage of the bounds
\begin{align*}
\|\tilde{f}\|_{n+2,\mathcal{Q}_T}&\le  \|\Phi\|_{n+2,\mathcal{Q}_T}\big(M_0+{\eta}(M_0)\big),\\
\|\tilde{g}\|_{n+1,\Gamma_T}&\le  \|\Theta\|_{n+1,\Gamma_T}\big(M_0+{\eta}(M_0)\big),
\end{align*}
and of the evident inequalities
\begin{align*}
\left\| |Dv_{\delta_1}|^2-|Dv_{\delta_2}|^2\right\|_{n+2,\mathcal{Q}_T}\le &\ \|Dw\|^2_{2(n+2),\mathcal{Q}_T}\\
+&\ 2\|Dw\|_{2(n+2),\mathcal{Q}_T}\|Dv_{\delta_1}\|_{2(n+2),\mathcal{Q}_T},\\
\left\| |dv_{\delta_1}|^2-|dv_{\delta_2}|^2\right\|_{n+1,\Gamma_T}\le &\ \|dw\|^2_{2(n+1),\Gamma_T}+2\|dw\|_{2(n+1),\Gamma_T}\|dv_{\delta_1}\|_{2(n+1),\Gamma_T},
\end{align*}
we rewrite \eqref{37} as follows:
\begin{align}
\label{38}
&\|w\|_{V_{n+2,n+1}(\mathcal{Q}_T)}\le \ N_{10} \Big( (\delta_2-\delta_1)+
\|Dw\|^2_{2(n+2),\mathcal{Q}_T}+\|dw\|^2_{2(n+1),\Gamma_T}\\
\nonumber
 +&\ \|Dw\|_{2(n+2),\mathcal{Q}_T}\|Dv_{\delta_1}\|_{2(n+2),\mathcal{Q}_T} +\|dw\|_{2(n+1),\Gamma_T}\|dv_{\delta_1}\|_{2(n+1),\Gamma_T}\Big),
\end{align}
where $N_{10}$ depends on the same quantities as $N_9$. 

We infer now the anisotropic  Gagliardo--Nirenberg type inequality \cite[Theorem~4]{Sol72}
and the estimate \eqref{36} to get
\begin{align}
\label{Dw}
\|Dw\|^2_{2(n+2),\mathcal{Q}_T}\le&\ C(n,\Omega,T)\big(\|D^2w\|_{n+2,\mathcal{Q}_T}+\|w\|_{\infty,\mathcal{Q}_T}\big)\|w\|_{\infty,\mathcal{Q}_T}\\
\nonumber
\le &\ C(n,\Omega,T)(\delta_2-\delta_1)\big(M_0+{\eta}(M_0)\big)\|w\|_{V_{n+2,n+1}(\mathcal{Q}_T)},
\end{align}
and similarly
\begin{equation}\label{dw}
\|dw\|^2_{2(n+1),\Gamma_T}\le
C(n,\Omega,T)(\delta_2-\delta_1)\big(M_0+{\eta}(M_0)\big)\|w\|_{V_{n+2,n+1}(\mathcal{Q}_T)}.
\end{equation}
We substitute these inequalities into \eqref{38} and estimate the last two terms by the Cauchy inequality. This gives
\begin{align*}
\|w\|_{V_{n+2,n+1}(\mathcal{Q}_T)}\le &\ N_{11} \Big((\delta_2-\delta_1) +(\delta_2-\delta_1+\varkappa)\|w\|_{V_{n+2,n+1}(\mathcal{Q}_T)}\\
+&\ \frac{\delta_2-\delta_1}{\varkappa}
\big(\|Dv_{\delta_1}\|^2_{2(n+2),\mathcal{Q}_T}+\|dv_{\delta_1}\|^2_{2(n+1),\Gamma_T}\big)\Big)
\end{align*}
with arbitrary $\varkappa\in(0,1)$. Here
$N_{11}$ depends on the same quantities as $N_9$. 
Choosing $\varkappa=\frac 1{4N_{11}}$ we obtain \eqref{39} for $\delta_2-\delta_1\le\varkappa$, in view of \eqref{Dw} and \eqref{dw}.
\end{proof}

\medskip

Turning back to
the proof of Theorem \ref{Du-est}, we fix $\delta_1=0$ and $\delta_2=\varkappa$ in \eqref{39} and remember that $v_0\equiv0$ by Corollary~\ref{cor-uniqueness}. This gives the \textit{a~priori} estimate
\begin{equation}
\label{40}
\|Dv_{\varkappa}\|_{2(n+2),\mathcal{Q}_T}+\|dv_{\varkappa}\|_{2(n+1),\Gamma_T} \le C_3\varkappa.
\end{equation}

The solvability of \eqref{VP1-delta}--\eqref{VP3-delta} with $\delta=\varkappa$ is a consequence of the Leray--Schauder fixed point theorem. Indeed, we introduce the space
\begin{equation*}
{\mathcal V}_{p,q}(\mathcal{Q}_T)=\big\{v\in {\cal C}(\overline{\mathcal{Q}}_T)\ \colon \  Dv\in L^p(\mathcal{Q}_T),\ \ dv\in L^q(\Gamma_T) \big\}
\end{equation*}
equipped with the natural norm
$$
\|v\|_{{\mathcal V}_{p,q}(\mathcal{Q}_T)}=\|Dv\|_{p,\mathcal{Q}_T}+\|dv\|_{q,\Gamma_T}+\sup_{\mathcal{Q}_T}|v|,
$$
and define the nonlinear operator 
$$
\mathcal{F}: \ {\mathcal V}_{2(n+2),2(n+1)}(\mathcal{Q}_T) \mapsto V_{n+2,n+1}(\mathcal{Q}_T)
$$ 
which associates to any $w\in {\mathcal V}_{2(n+2),2(n+1)}(\mathcal{Q}_T)$ the unique solution $v=\mathcal{F}(w)$ of the \textit{linear} parabolic Venttsel problem
\begin{align*}
\widehat{\mathcal{L}}v=&\  \varkappa \tilde{f}(x,t)-
\tilde{a}(x,t)|Dw|^2 &\text{a.e.\ }& \text{in}\ \mathcal{Q}_T, \\
\widehat{\mathcal{B}}v =&\ \varkappa\tilde{g}(x,t)-
\tilde{\alpha}(x,t)|dw|^2 &\text{a.e.\ }& \text{on}\ \Gamma_T,\\
v\big|_{t=0}&=0 &&\text{in}\ \overline{\Omega}
\end{align*}
with operators $\widehat{\mathcal{L}}$ and $\widehat{\mathcal{B}}$ given by \eqref{hat-L} and \eqref{hat-B}, respectively.

The unique solvability of that problem follows from Theorem~\ref{existence} and Proposition~\ref{global-max-principle} 
due to assumptions of Theorem \ref{Du-est} and $w\in {\mathcal V}_{2(n+2),2(n+1)}(\mathcal{Q}_T)$. Therefore, the nonlinear operator $\mathcal{F}$ is well defined. Moreover, the problem \eqref{VP1-delta}--\eqref{VP3-delta} with $\delta=\varkappa$ is equivalent to the equation $u=\mathcal{F}(u)$. 

The estimate \eqref{2.3-AN95} yields the continuity of $\mathcal{F}$, while the compactness of the embedding $V_{n+2,n+1}(\mathcal{Q}_T) \hookrightarrow {\mathcal V}_{2(n+2),2(n+1)}(\mathcal{Q}_T)$ guarantees the compactness of $\mathcal{F}$ considered as a mapping from ${\mathcal V}_{2(n+2),2(n+1)}(\mathcal{Q}_T)$ into itself. Finally, any solution of the equation $v=\sigma\mathcal{F}(v)$, $0\le\sigma\le1$, that is,
\begin{align*}
\widehat{\mathcal{L}}v=&\  \sigma \big(\varkappa \tilde{f}(x,t)-
\tilde{a}(x,t)|Dv|^2\big) &\text{a.e.\ }& \text{in}\ \mathcal{Q}_T, \\
\widehat{\mathcal{B}}v =&\ \sigma \big(\varkappa\tilde{g}(x,t)-
\tilde{\alpha}(x,t)|dv|^2\big) &\text{a.e.\ }& \text{on}\ \Gamma_T,\\
v\big|_{t=0}&=0 &&\text{in}\ \overline{\Omega}
\end{align*}
satisfies, by \eqref{36} and \eqref{40}, the \textit{a~priori} estimate
$$
\|v\|_{{\mathcal V}_{2(n+2),2(n+1)}(\mathcal{Q}_T)} \le C_4
$$
with $C_4=\varkappa(C_3+M_0+{\eta}(M_0))$ independent of $\sigma$. This suffices to combine the Leray--Schauder theorem (see, e.g., \cite[Theorem 11.6]{GT01}) with Corollary~\ref{cor-uniqueness} in order to get unique solvability of \eqref{VP1-delta}--\eqref{VP3-delta} with $\delta=\varkappa$.

To complete the proof of Theorem~\ref{Du-est}, we take  successively $\delta_1=k\varkappa$, $\delta_2=(k+1)\varkappa$, $k\in\mathbb{N}$, and repeat the above procedure. Finitely many iterations of \eqref{39} lead to \eqref{grad-est}
since $v_1$ is nothing else than the solution $u$ of the problem \eqref{VP1}, \eqref{VP2} with homogeneous  initial condition.
\end{proof}

Based on the \textit{a~priori} gradient estimate derived in Theorem~\ref{Du-est}, we can get the solvability of the quasilinear parabolic Venttsel problem
\eqref{1.1-AN95}--\eqref{1.3-AN95}
under the hypotheses listed at the beginning of Section~\ref{sec4}.

\begin{theorem}
\label{quasilinear-existence}
Let $\partial\Omega \in \C^{1,1}$ and let the functions involved in \eqref{1.1-AN95}--\eqref{1.2-AN95} satisfy 
the conditions \eqref{A1}--\eqref{A5}, \eqref{V}, \eqref{V0}--\eqref{V5}.

If any solution $u\in V_{n+2,n+1}(\mathcal{Q}_T)$ to the one-parameter family of Venttsel problems
\begin{align}
\label{VP1-sigma}
\partial_tu -a^{ij}(x,t,u)D_{i}D_ju+ \sigma a(x,t,u,Du)=&\ 0 \quad \text{a.e. in}\ \mathcal{Q}_T,\\
\label{VP2-sigma}
\partial_tu -\alpha^{ij}(x,t,u)d_{i}d_ju+
\sigma\alpha(x,t,u,Du)
=&\ 0  \quad \text{a.e. on}\ \Gamma_T,\\
\label{VP3-sigma}
u\big|_{t=0}=&\ 0  \quad
\text{in}\  \overline{\Omega}
\end{align}
satisfies the \textit{a~priori} estimate
\begin{equation}\label{L-infty}
    \sup_{\mathcal{Q}_T} |u|\le M_0
\end{equation}
with $M_0$ independent of $u$ and $\sigma\in[0,1]$, then the quasilinear parabolic Venttsel problem \eqref{1.1-AN95}--\eqref{1.3-AN95} is solvable in the space $V_{n+2,n+1}(\mathcal{Q}_T)$.
\end{theorem}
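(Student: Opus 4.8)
The plan is to solve \eqref{1.1-AN95}--\eqref{1.3-AN95} by producing a fixed point of a solution operator via the Leray--Schauder theorem in the form \cite[Theorem~11.6]{GT01}, the same device already used for the auxiliary problem at the end of the proof of Theorem~\ref{Du-est}, but now applied at the fully quasilinear level. The continuation parameter is precisely the one in the family \eqref{VP1-sigma}--\eqref{VP3-sigma}: at $\sigma=0$ that problem reduces to $\partial_tu-a^{ij}(x,t,u)D_iD_ju=0$ in $\mathcal{Q}_T$, $\partial_tu-\alpha^{ij}(x,t,u)d_id_ju=0$ on $\Gamma_T$ with $u|_{t=0}=0$, whose only solution is $u\equiv0$, whereas $\sigma=1$ returns \eqref{1.1-AN95}--\eqref{1.3-AN95}.

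First I would assemble the uniform a~priori bound demanded by Leray--Schauder. For every $\sigma\in[0,1]$ the nonlinearities $\sigma a$ and $\sigma\alpha$ obey \eqref{A4}, \eqref{V4} and \eqref{V} with the \emph{same} $\mu,\eta,b,\Phi,\beta,\Theta,\beta_0$, so all the structural hypotheses hold uniformly in $\sigma$. Consequently the assumed supremum bound \eqref{L-infty} propagates, through Proposition~\ref{Holder-est}, to a uniform H\"older bound, and then through Theorem~\ref{Du-est} to the uniform gradient bound \eqref{grad-est}. Freezing the solution in its own coefficients as in \eqref{tilde-aij}--\eqref{tilde-a} and \eqref{tilde-alpij}--\eqref{beta0u} and regarding the now $L^{n+2}$- resp.\ $L^{n+1}$-bounded quadratic terms $\mu\tilde a|Du|^2$, $\mu\tilde\alpha|du|^2$ as part of the data, Theorem~\ref{apriori-estimate} upgrades these to a bound $\|u\|_{V_{n+2,n+1}(\mathcal{Q}_T)}\le M_2$ valid for \emph{every} solution of \eqref{VP1-sigma}--\eqref{VP3-sigma} and every $\sigma$.

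Next I would set up the map. Working in $\mathcal{B}=V_{n+2,n+1}(\mathcal{Q}_T)$, I assign to $(w,\sigma)$ the value $v=\mathcal{T}(w,\sigma)$ defined as the unique solution of the \emph{linear} Venttsel problem obtained from \eqref{VP1-sigma}--\eqref{VP3-sigma} by evaluating the Carath\'eodory coefficients and the normalized factors $\tilde a,\tilde b^i,\tilde\alpha,\tilde\beta^{*i},\tilde\beta_0$ at $w$ (the last one through \eqref{beta0u}, which is why $w$ is taken in $V_{n+2,n+1}$ so that the trace $\partial_{\mathbf n}w$ is available), by letting $D_iv,\,D_iD_jv,\,d_iv,\,d_id_jv,\,\partial_{\mathbf n}v$ act on the new unknown, and by moving the quadratic terms to the right-hand side. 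With $(p,q)=(n+2,n+1)$ one verifies \eqref{pq-cond}, since $q\tfrac{n+2}{n+1}=n+2=p<p^{*}=+\infty$; moreover the frozen coefficients satisfy \eqref{L1}--\eqref{L4} and \eqref{B1}--\eqref{B4}, for \eqref{A5}, \eqref{V0} and \eqref{V5} reproduce, up to the bounded factor $\eta(M_0)$, conditions \eqref{L3}, \eqref{B5} and \eqref{B3}, while \eqref{V} yields $0\le\tilde\beta_0[w]\le\eta(M_0)\beta_0$ and $\Phi,\Theta$ serve as admissible zero-order coefficients. Hence Theorem~\ref{existence} makes $\mathcal{T}$ well defined, $\mathcal{T}(\cdot,0)\equiv0$ by uniqueness, and by the very construction a fixed point $u=\mathcal{T}(u,\sigma)$ is precisely a solution of \eqref{VP1-sigma}--\eqref{VP3-sigma}, to which the bound $M_2$ applies.

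The hard part will be the continuity and compactness of $\mathcal{T}$. For the coefficients $a^{ij}(\cdot,w),\alpha^{ij}(\cdot,w)$ and the quadratic data this follows, as in Theorem~\ref{Du-est}, from the compact embedding $V_{n+2,n+1}(\mathcal{Q}_T)\hookrightarrow\mathcal{V}_{2(n+2),2(n+1)}(\mathcal{Q}_T)$: a bounded sequence $w_k$ yields $w_k\to w$ uniformly with $Dw_k\to Dw$ in $L^{2(n+2)}$, so that $a^{ij}(\cdot,w_k)\to a^{ij}(\cdot,w)$ by Remark~\ref{same-way} and $|Dw_k|^2\to|Dw|^2$ in $L^{n+2}$, and the continuous dependence of the linear operator, via Theorem~\ref{apriori-estimate} applied to the difference $v_k-v$, transfers the convergence to the images. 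The genuinely delicate point is the coefficient $\tilde\beta_0[w]$ of $\partial_{\mathbf n}v$, which through \eqref{beta0u} depends on the trace $\partial_{\mathbf n}w$ and is therefore \emph{not} governed by the $\mathcal{V}_{2(n+2),2(n+1)}$-topology; here I would exploit the compact trace embedding of $W^{2,1}_{n+2}(\mathcal{Q}_T)$ to extract $\partial_{\mathbf n}w_k\to\partial_{\mathbf n}w$ a.e.\ on $\Gamma_T$, and then pass to the limit in \eqref{beta0u} by dominated convergence, the domination $0\le\tilde\beta_0[w_k]\le\eta(M_0)\beta_0$ together with the $L^{n+1}$-Orlicz integrability \eqref{V0} supplying the dominating function. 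Once continuity and compactness are in place, \cite[Theorem~11.6]{GT01} furnishes a fixed point at $\sigma=1$, that is, a solution $u\in V_{n+2,n+1}(\mathcal{Q}_T)$ of \eqref{1.1-AN95}--\eqref{1.3-AN95}.
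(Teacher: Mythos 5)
Your proposal is correct and follows the paper's own strategy in all essentials: the Leray--Schauder theorem \cite[Theorem~11.6]{GT01}, the same homotopy in $\sigma$ (so that fixed points of $\mathcal{T}(\cdot,\sigma)$ are exactly the solutions of \eqref{VP1-sigma}--\eqref{VP3-sigma}), the same freezing of coefficients \eqref{tilde-aij}--\eqref{tilde-b}, \eqref{tilde-alpij}--\eqref{beta0u}, and the same chain \eqref{L-infty} $\Rightarrow$ Proposition~\ref{Holder-est} $\Rightarrow$ Theorem~\ref{Du-est} $\Rightarrow$ Theorem~\ref{apriori-estimate} for the uniform bound on all fixed points. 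The one structural deviation is the ambient Banach space: you run the fixed-point argument in $V_{n+2,n+1}(\mathcal{Q}_T)$ itself, while the paper introduces the weaker space $\widetilde{\mathcal{V}}_{2(n+2),2(n+1)}(\mathcal{Q}_T)$, namely $\mathcal{V}_{2(n+2),2(n+1)}(\mathcal{Q}_T)$ augmented by the norm $\|\partial_{\mathbf{n}}v\|_{L^{1}(\Gamma_T)}$, and lets $\mathcal{F}_1$ and $\mathcal{T}(\cdot,\sigma)$ act there. That space is precisely the paper's device for the difficulty you single out as ``the genuinely delicate point'': the coefficient $\tilde\beta_0$ of \eqref{beta0u} sees the trace $\partial_{\mathbf{n}}w$ of the input, which the $\mathcal{V}$-topology does not control. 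With the paper's choice, compactness of the operator is immediate from the compact embedding $V_{n+2,n+1}(\mathcal{Q}_T)\hookrightarrow\widetilde{\mathcal{V}}_{2(n+2),2(n+1)}(\mathcal{Q}_T)$, and all the analytic work (the estimate \eqref{2.3h-AN95} together with the dominated-convergence arguments for $\tilde a_h$, $\tilde\alpha_h$, $\tilde\beta^{*i}_h$, $\tilde\beta_{0,h}$) goes into proving continuity; with your choice, continuity in the strong $V_{n+2,n+1}$-topology is the easy half, and exactly the same dominated-convergence work reappears inside the compactness proof, where convergence of the inputs along the compact embeddings (including the compact normal-derivative trace giving a.e.\ convergence of $\partial_{\mathbf{n}}w_k$ on $\Gamma_T$) must be transferred to $V_{n+2,n+1}$-convergence of the images via the linear estimate applied to differences. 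The two organizations are mathematically equivalent: the paper's buys compactness for free at the cost of an ad hoc function space, yours avoids that space but must establish compactness by hand.
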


\begin{proof}
We again proceed by using the Leray--Schauder theorem. Introduce the space
$$
\widetilde{\mathcal{V}}_{2(n+2),2(n+1)}(\mathcal{Q}_T):=\Big\{v\in \mathcal{V}_{2(n+2),2(n+1)}(\mathcal{Q}_T) \colon\quad \partial_{\mathbf{n}}v\in L^{1}(\Gamma_T)\Big\},
$$
equipped with the norm
$$
\|v\|_{\widetilde{\mathcal{V}}_{2(n+2),2(n+1)}(\mathcal{Q}_T)}=
\|v\|_{\mathcal{V}_{2(n+2),2(n+1)}(\mathcal{Q}_T)}+
\|\partial_{\mathbf{n}}v\|_{L^{1}(\Gamma_T)}.
$$
and define the nonlinear operator 
$$\mathcal{F}_1\colon\ \ \widetilde{\mathcal{V}}_{2(n+2),2(n+1)}(\mathcal{Q}_T) \mapsto V_{n+2,n+1}(\mathcal{Q}_T)
$$ 
which associates to any  $u\in \widetilde{\mathcal{V}}_{2(n+2),2(n+1)}(\mathcal{Q}_T)$ the solution $v=\mathcal{F}_1(u)$ of the \textit{linear} parabolic Venttsel problem
\begin{align}
\label{bb-L}
{\mathbb L} v:=
\partial_tv-\tilde{a}^{ij}D_iD_jv + \tilde{b}^iD_iv =&\  -\tilde{a}(\mu|Du|^2+\Phi) &\text{a.e. in}\ \mathcal{Q}_T,\\
\label{bb-B}
{\mathbb B}v:=
 \partial_tv-\tilde{\alpha}^{ij}d_id_jv +
\tilde\beta^{*i}d_iv\ +&\ \tilde\beta_0\partial_{\mathbf{n}}v \\
\nonumber
 =&\ -\tilde\alpha(\mu|du|^2+\Theta) &\text{a.e. on}\ \Gamma_T,\\
 \label{bb-I}
 v\big|_{t=0}=&\ 0  &\
\text{in}\ \ \overline{\Omega},
\end{align}
where $\tilde{a}^{ij}$, $\tilde{a}$ and $\tilde{b}^i$ are defined by  \eqref{tilde-aij}--\eqref{tilde-b}, while $\tilde{\alpha}^{ij}$, $\tilde{\alpha}$, $\tilde\beta^{*i}$ and $\tilde\beta_0$ are given by \eqref{tilde-alpij}--\eqref{beta0u}, respectively.

Similarly to the proof of Theorem \ref{Du-est}, we establish that the problem \eqref{bb-L}--\eqref{bb-B} satisfies all the hypotheses of Theorem~\ref{apriori-estimate}.  On the base of
Theorem \ref{existence} we conclude that the problem \eqref{bb-L}--\eqref{bb-I} is uniquely solvable in $V_{n+2,n+1}(\mathcal{Q}_T)$ and therefore the nonlinear operator $\mathcal{F}_1$ is well defined. Moreover, it is easy to see that the original quasilinear parabolic Venttsel problem \eqref{1.1-AN95}--\eqref{1.3-AN95} is equivalent to the equation $u=\mathcal{F}_1(u)$. 

The compactness of the embedding $V_{n+2,n+1}(\mathcal{Q}_T)\hookrightarrow\widetilde{\mathcal V}_{2(n+2),2(n+1)}(\mathcal{Q}_T)$ guarantees the compactness of the operator $\mathcal{F}_1$ considered as a mapping from $\widetilde{\mathcal V}_{2(n+2),2(n+1)}(\mathcal{Q}_T)$ into itself.

To prove the continuity of $\mathcal{F}_1$ in the space $\widetilde{\mathcal V}_{2(n+2),2(n+1)}(\mathcal{Q}_T)$,
we consider a sequence $u_h\in \widetilde{\mathcal V}_{2(n+2),2(n+1)}(\mathcal{Q}_T)$ such that $u_h\to u$ in $\widetilde{\mathcal V}_{2(n+2),2(n+1)}(\mathcal{Q}_T)$ as $h\to \infty$, and set $v_h=\mathcal{F}_1(u_h)$, $v=\mathcal{F}_1(u)$. Thus, $v$ is the solution of \eqref{bb-L}--\eqref{bb-I} while $v_h$ solves the problem
\begin{align}
\label{bb-Lh}
{\mathbb L}_h v_h:= \partial_tv_h-\tilde{a}_h^{ij}D_iD_jv_h\ +&\  \tilde{b}_h^iD_iv_h \\
\nonumber
=&\ -\tilde{a}_h(\mu|Du_h|^2+\Phi) &\text{a.e. in}\ \mathcal{Q}_T,\\
\label{bb-Bh}
{\mathbb B}_h v_h:= \partial_tv_h-\tilde{\alpha}_h^{ij}d_id_jv_h\ +&\ 
\tilde\beta^{*i}_hd_iv_h + \tilde\beta_{0,h}\partial_{\mathbf{n}}v_h\\ 
\nonumber
=&\ -\tilde\alpha_h(\mu|du_h|^2+\Theta) &\text{a.e. on}\ \Gamma_T,\\
\label{bb-Ih}
 v_h\big|_{t=0}=&\ 0  &\
\text{in}\ \ \overline{\Omega},
\end{align}
where $\tilde{a}_h^{ij}$, $\tilde{a}_h$, $\tilde{b}_h^i$,  $\tilde{\alpha}_h^{ij}$, $\tilde{\alpha}_h$, $\tilde\beta^{*i}_h$ and $\tilde\beta_{0,h}$ are defined similarly to \eqref{tilde-aij}--\eqref{tilde-b} and \eqref{tilde-alpij}--\eqref{beta0u} with $u$ replaced by $u_h$.

By
 the Arzel{\`a}-Ascoli Theorem, the set $\{u_h\}$ is uniformly equicontinuous. So, by Remark~\ref{same-way}, the
 $\textit{VMO}_x$-moduli of $\tilde{a}_h^{ij}(x,t)$  and $\tilde{\alpha}_h^{ij}(x,t)$ are uniformly bounded. Further, by definition we have
 \begin{gather*}
|\tilde{b}_h^i(x,t)|\le {\eta}(M)b(x,t),\qquad 
|\tilde\beta^{*i}_h(x,t)|\le {\eta}(M)\beta(x,t),\\ 
0\le\tilde\beta_{0,h}(x,t)\le{\eta}(M)\beta_0(x,t),
\end{gather*}
where $M=\sup_h\|u_h\|_{\mathcal{C}(\overline{\mathcal{Q}}_T)}$ while
the functions $b$, $\beta$ and $\beta_0$ verify the assumptions \eqref{L3}, \eqref{B3} and \eqref{B5}, respectively. It follows from 
Theorem~\ref{apriori-estimate} that 
\begin{equation*}
\|v_h\|_{V_{n+2,n+1}(\mathcal{Q}_T)} \le C_4{\eta}(M) \Big( \left\|\mu|Du_h|^2+\Phi\right\|_{n+2,\mathcal{Q}_T}
+\left\|\mu|du_h|^2+\Theta \right\|_{n+1,\Gamma_T} \Big), 
\end{equation*}
with constant $C_4$ independent of $h$ and $w_h$ (recall that $|\tilde a_h(x,t)|\le {\eta}(M)$ by \eqref{A4} and
$|\tilde \alpha_h(x,t)|\le {\eta}(M)$ by \eqref{V4}).
The  terms on the right-hand side above are uniformly bounded because of the boundedness of $u_h$ in $\widetilde{\mathcal V}_{2(n+2),2(n+1)}(\mathcal{Q}_T)$.
Thus, the sequence $v_h$ is bounded in $V_{n+2,n+1}(\mathcal{Q}_T)$. 

The difference $v-v_h$ solves the problem
\begin{align*}
{\mathbb L} (v-v_h)=&\ 
\big(\tilde{a}^{ij}-\tilde{a}_h^{ij}\big)D_iD_jv_h 
+ \big(\tilde{b}_h^i-\tilde{b}^i\big)D_iv_h \\
&\quad +  \tilde{a}_h\big(\mu|Du_h|^2+\Phi\big) - \tilde{a}\big(\mu|Du|^2+\Phi\big)\qquad\quad \text{a.e. in}\ \mathcal{Q}_T,
\\
{\mathbb B} (v-v_h)=&\ \big(\tilde{\alpha}^{ij}-\tilde{\alpha}_h^{ij}\big)
d_id_jv_h +
\big(\tilde\beta^{*i}_h-\tilde\beta^{*i}\big)d_iv_h +  \big(\tilde\beta_{0,h}-\tilde\beta_0\big)\partial_{\mathbf{n}}v_h \\ 
&\quad +  \tilde\alpha_h\big(\mu|du_h|^2+\Theta\big) - \tilde\alpha\big(\mu|du|^2+\Theta\big)
\qquad\qquad \text{a.e. on}\ \Gamma_T
\end{align*}
with the homogeneous initial data. The estimate \eqref{2.3-AN95} yields
\begin{align}
\label{2.3h-AN95}
\nonumber
\|v&-v_h\|_{V_{n+2,n+1}(\mathcal{Q}_T)}\\
\nonumber
& \le C_1\Big(\left\|\big(\tilde{a}^{ij}-\tilde{a}_h^{ij}\big)D_{i}D_jv_h  \right\|_{n+2,\mathcal{Q}_T}+  \left\|\big(\tilde{\alpha}^{ij}-\tilde{\alpha}_h^{ij}\big)
d_id_jv_h\right\|_{n+1,\Gamma_T}
\\
\nonumber
 & +
\big\|\big(\tilde{b}_h^i-\tilde{b}^i\big)d_iv_h\big\|_{n+2,\mathcal{Q}_T} + \big\|\big(\tilde\beta^{*i}_h-\tilde\beta^{*i}\big)d_iv_h\big\|_{n+1,\Gamma_T}  \\
 &+  \big\|\big(\tilde\beta_{0,h}-\tilde\beta_0\big)\partial_{\mathbf{n}}v_h\big\|_{n+1,\Gamma_T} \\ 
\nonumber
&+ \mu\left\|\tilde{a}_h|Du_h|^2\! -\! \tilde{a}|Du|^2\right\|_{n+2,\mathcal{Q}_T} + \mu\left\|\tilde\alpha_h|du_h|^2\! -\! \tilde\alpha|du|^2\right\|_{n+1,\Gamma_T}\\
\nonumber
 &+  \left\|\big(\tilde{a}_h-\tilde{a}\big)\Phi\right\|_{n+2,\mathcal{Q}_T} +\left\|\big(\tilde\alpha_h-\tilde\alpha\big)\Theta\right\|_{n+1,\Gamma_T}\Big) .
\end{align}

We know that $\|D_{i}D_jv_h\|_{n+2,\mathcal{Q}_T}$ are bounded, while 
$$
\left\|\tilde{a}^{ij}-\tilde{a}_h^{ij}\right\|_{\infty,\mathcal{Q}_T}=\left\|a^{ij}(\cdot,u)-a^{ij}(\cdot,u_h)\right\|_{\infty,\mathcal{Q}_T}\to0
$$
as $h\to\infty$, as consequence of $u_h\to u$ in $\mathcal{C}(\overline{\mathcal{Q}}_T)$ and the hypothesis \eqref{A3}. 

Therefore, the first term 
on the right-hand side of \eqref{2.3h-AN95} tends to $0$ as $h\to \infty$. The second term is managed in the same way.

Further on, 
$$
\tilde\beta^{*i}_h-\tilde\beta^{*i}=\beta\,\bigg(\dfrac{\alpha(\cdot,u_h,du_h)}{\mu|du_h|^2+\beta|du_h|+\Theta}\, \dfrac{d_iu_h}{|du_h|}-\dfrac{\alpha(\cdot,u,du)}{\mu|du|^2+\beta|du|+\Theta}\, \dfrac{d_iu}{|du|}\bigg)
$$
tends to zero almost everywhere on $\Gamma_T$ as $h\to\infty$. Due to the evident inequality $\big|\tilde\beta^{*i}_h(x,t)-\tilde\beta^{*i}(x,t)\big|\le 2{\eta}(M)\beta(x,t)$, the hypothesis \eqref{V5} together with the Lebesgue dominated convergence theorem ensure that
$$
\int\limits_{\Gamma_T}
\big|\tilde\beta^{*i}_h(x,t)-\tilde\beta^{*i}(x,t)\big|^{n+1}
\left(\log{\left(1+\big|\tilde\beta^{*i}_h(x,t)-\tilde\beta^{*i}(x,t)\big|\right)}\right)^{n}dxdt \to0
$$
as $h\to\infty$, that is
$$
\|\tilde{\boldsymbol{\beta}}{}^*_h-\tilde{\boldsymbol{\beta}}{}^*\|_{\mathcal{X}(\Gamma_T)} \to0,\qquad h\to\infty,
$$
where $\mathcal{X}(\Gamma_T)$ stands for the Orlicz space defined by the first relation in \eqref{V5}.

Theorem 10.5 in \cite{BIN75} shows that
$$
\big\|\big(\tilde\beta^{*i}_h-\tilde\beta^{*i}\big)d_iv_h\big\|_{n+1,\Gamma_T}\le 
\big\|\tilde{\boldsymbol{\beta}}{}^*_h-\tilde{\boldsymbol{\beta}}{}^*\big\|_{\mathcal{X}(\Gamma_T)}
\big\|v_h\big\|_{W^{2,1}_{n+1}(\Gamma_T)}\to 0 
$$
as $h\to\infty$.
The third and the fifth terms on the right-hand side of \eqref{2.3h-AN95} are managed in the same manner. Further on,
\begin{align*}
\left\|\tilde{a}_h|Du_h|^2 - \tilde{a}|Du|^2\right\|_{n+2,\mathcal{Q}_T}\le &\ 
\|\tilde{a}_h\|_{\infty,\mathcal{Q}_T}\left\||Du_h|^2 - |Du|^2\right\|_{n+2,\mathcal{Q}_T}\\
&\ + \left\|\big(\tilde{a}_h- \tilde{a}\big) |Du|^2\right\|_{n+2,\mathcal{Q}_T}.
\end{align*}
The first term here tends to zero since $u_h\to u$ in $\widetilde{\mathcal V}_{2(n+2),2(n+1)}(\mathcal{Q}_T)$, while the second one is infinitesimal by the Lebesgue theorem. All the remaining terms in \eqref{2.3h-AN95} are estimated in a similar way and we obtain finally $v_h\to v$ in $V_{n+2,n+1}(\mathcal{Q}_T)$  that proves the continuity of the operator $\mathcal{F}_1$.

\medskip

In order to apply the Leray--Schauder theorem and to get the existence of a fixed point of $\mathcal{F}_1$, we present a family of continuous, compact nonlinear operators $\mathcal{T}(\cdot, \sigma)$ continuously depending on the parameter $\sigma\in[0,1]$ such that 
$$
\mathcal{T}(u, 0)\equiv0;\qquad \mathcal{T}(u, 1)=\mathcal{F}_1(u), \qquad u\in \widetilde{\mathcal V}_{2(n+2),2(n+1)}(\mathcal{Q}_T).
$$
Namely, given a $\sigma\in[0,1]$, the operator $\mathcal{T}(\cdot, \sigma)$
associates to any  $u\in \widetilde{\mathcal V}_{2(n+2),2(n+1)}(\mathcal{Q}_T)$ the unique solution $v_\sigma$ of the linear parabolic Venttsel problem
\begin{align*}
&\partial_tv_{\sigma}-\tilde{a}^{ij}D_iD_jv_\sigma + \sigma\tilde{b}^iD_iv_\sigma = -\sigma\tilde{a}\big(\mu|Du|^2+\Phi\big)
&\text{a.e. in}\ \mathcal{Q}_T,\\
&\partial_tv_{\sigma}-\tilde{\alpha}^{ij}d_id_jv_\sigma +
\sigma\tilde\beta^{*i}d_iv_\sigma +  \sigma\tilde\beta_0\partial_{\mathbf{n}}v_\sigma
= -\sigma\tilde\alpha\big(\mu|du|^2+\Theta\big) 
&\text{a.e. on}\ \Gamma_T
\end{align*}
with the homogeneous initial data.

All mentioned properties of the family $\mathcal{T}(\cdot, \sigma)$ follow from the previous arguments, and to apply the Leray--Schauder theorem it remains only to derive the \textit{a~priori} estimate
\begin{equation}
\label{AA}
\|u\|_{\widetilde{\mathcal V}_{2(n+2),2(n+1)}(\mathcal{Q}_T)} \le C_5
\end{equation}
 for any solution of the equation $u =\mathcal{T}(u, \sigma)$ in $\widetilde{\mathcal V}_{2(n+2),2(n+1)}(\mathcal{Q}_T)$, with a constant $C_5$ independent of $\sigma\in[0,1]$ and $u$.

To this end, we notice that the equation $u =\mathcal{T}(u, \sigma)$
  is equivalent to the Venttsel problem \eqref{VP1-sigma}--\eqref{VP3-sigma}. We apply 
Theorem \ref{Du-est} and take into account that
under the assumption \eqref{L-infty} the constant $M_1$ in \eqref{grad-est} can be evidently chosen to be independent of $\sigma$. 

Finally, we rewrite the equations \eqref{VP1-sigma}--\eqref{VP2-sigma} in the form
\begin{align*}
&\partial_tu-\tilde{a}^{ij}D_iD_ju + \sigma\tilde{b}^iD_iu =  -\sigma F
&\text{a.e. in}\ \mathcal{Q}_T,\\
&\partial_tu-\tilde{\alpha}^{ij}d_id_ju +
\sigma\tilde\beta^{*i}d_iu + \sigma\tilde\beta_0\partial_{\mathbf{n}}u =
-\sigma G
&\text{a.e. on}\ \Gamma_T,
\end{align*}
with
\begin{align*}
F = \tilde{a}\big(\mu|Du|^2+\Phi\big),\qquad
G = \tilde\alpha\big(\mu|du|^2+\Theta\big),
\end{align*}
and conclude by \eqref{grad-est} that $\|F\|_{n+2,\mathcal{Q}_T}$ and $\|G\|_{n+1,\Gamma_T}$ are bounded uniformly with respect to $\sigma$ and $u$. Thus, \eqref{2.3-AN95} implies
$$
\|u\|_{V_{n+2,n+1}(\mathcal{Q}_T)}\le C
$$
with $C$ independent of $u$ and $\sigma$, which immediately implies the desired \textit{a~priori} estimate \eqref{AA}.
Application of the Leray--Schauder fixed point theorem completes the proof of 
Theorem~\ref{quasilinear-existence}. 
\end{proof}

Theorem~\ref{quasilinear-existence} is of conditional
 type. It reduces the solvability of the quasilinear Venttsel problem \eqref{1.1-AN95}--\eqref{1.3-AN95} to the \textit{a~priori} estimate \eqref{L-infty} for {\it any} solution $u\in V_{n+2,n+1}(\mathcal{Q}_T)$ of the problem \eqref{VP1-sigma}--\eqref{VP3-sigma}, with $M_0$ independent of $u$ and $\sigma\in[0,1]$. We provide now two simple sufficient conditions ensuring the validity of \eqref{L-infty}.\footnote{Similarly to the footnote \ref{foot}, the assumptions on $\partial\Omega$ here can be weakened.}

\begin{thm}\label{M0}
Let $\partial\Omega\in \mathcal{C}^{1,1}$. Assume that conditions \eqref{A1}, \eqref{V} and \eqref{V1} are satisfied, together with
 \eqref{A4} and \eqref{V4} where
$$
b, \Phi \in L^{n+1}(\mathcal{Q}_T);\quad
\beta,\Theta \in L^{n}(\Gamma_T).
$$

\begin{itemize}
    \item[{\bf 1.}] (\cite[Lemma 4.1]{AN96}). 
If $\eta(\zeta)\equiv1$ then any solution $u\in V_{n+1,n}(\mathcal{Q}_T)$ of  \eqref{VP1-sigma}--\eqref{VP3-sigma} satisfies the bound \eqref{L-infty}, where $M_0$ depends only on $n$, $\nu$, $\mu$, $\mathrm{diam}\,\Omega$, $T$,   $\|b\|_{n+1,\mathcal{Q}_T}$, $\|\Phi\|_{n+1,\mathcal{Q}_T}$, $\|\beta\|_{n,\Gamma_T}$,
$\|\Theta\|_{n,\Gamma_T}$, and on the properties of $\partial\Omega$.

\item[{\bf 2.}] 
Suppose that for $|z|\ge z_0$ the functions $a(x,t,z,p)$ and $\alpha(x,t,z,p^*)$ with $p^*\perp {\bf n}(x)$ are weakly differentiable with respect to $z$ and
$$
a_z(x,t,z,p)\ge \theta_0 \Phi(x,t);\quad
\alpha_z(x,t,z,p^*)\ge \theta_0 \Theta(x,t),\quad \theta_0={\text{\rm const}}>0.
$$

Then any solution $u\in V_{n+1,n}(\mathcal{Q}_T)$ of  \eqref{VP1-sigma}--\eqref{VP3-sigma} satisfies the bound \eqref{L-infty} with $M_0=\theta_0^{-1}\eta(z_0)+z_0$.
\end{itemize}

\end{thm}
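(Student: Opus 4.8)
The first assertion is quoted directly: it is \cite[Lemma~4.1]{AN96}, so nothing needs to be proved. For the second assertion the plan is a comparison argument against the constant $M_0$, eventually reduced to the linear global maximum principle of Proposition~\ref{global-max-principle}. By symmetry it suffices to establish the one-sided bound $u\le M_0$, the estimate $u\ge -M_0$ following by applying the same reasoning to $-u$ (the hypotheses $a_z\ge\theta_0\Phi$, $\alpha_z\ge\theta_0\Theta$ being imposed on the full region $|z|\ge z_0$). I fix a solution $u$ of \eqref{VP1-sigma}--\eqref{VP3-sigma}; since $u\in V_{n+2,n+1}(\mathcal{Q}_T)\hookrightarrow\mathcal{C}(\overline{\mathcal{Q}}_T)$, its supremum $M:=\sup_{\mathcal{Q}_T}|u|$ is finite, a quantity I will use only to verify the integrability of the linearized coefficients and never in the final constant.

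The genuine difficulty is that $a$ and $\alpha$ grow quadratically in the gradient, while the gradient estimate of Theorem~\ref{Du-est} is not yet available (using it would be circular, as that theorem presupposes \eqref{L-infty}). I therefore remove the quadratic terms by a Cole--Hopf/Kruzhkov substitution $v=\Psi(u)$, with $\Psi'(z)=\exp\!\big(\kappa\,(z-z_0)_+\big)$ and $\kappa=\mu\eta(z_0)/\nu$, so that $\Psi$ is $\mathcal{C}^{1,1}$, increasing, and $v\in V_{n+2,n+1}(\mathcal{Q}_T)$. On the set $\{u>z_0\}$ I combine the growth bound \eqref{A4} taken at the fixed level $z_0$ with the monotonicity hypothesis through
$$
a(x,t,u,Du)=a(x,t,z_0,Du)+\int_{z_0}^{u}a_\zeta(x,t,\zeta,Du)\,d\zeta\ \ge\ -\eta(z_0)\big(\mu|Du|^2+b|Du|+\Phi\big)+\theta_0\Phi\,(u-z_0).
$$
Feeding this into $\partial_tv-a^{ij}D_iD_jv=\Psi'(u)(\partial_tu-a^{ij}D_iD_ju)-\Psi''(u)a^{ij}D_iuD_ju$ and using $a^{ij}D_iuD_ju\ge\nu|Du|^2$, the choice of $\kappa$ makes the quadratic term disappear, turns the linear gradient term into a genuine drift $\mathbf{b}\cdot Dv$ with $|\mathbf{b}|\le\eta(z_0)b\in L^{n+1}(\mathcal{Q}_T)$, and leaves the zeroth-order forcing $\sigma\Psi'(u)\Phi\big[\eta(z_0)-\theta_0(u-z_0)\big]$, which is nonpositive exactly on $\{u>M_0\}$ because of $\Phi\ge0$ and the definition $M_0=\theta_0^{-1}\eta(z_0)+z_0$.

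The Venttsel condition is handled by the very same substitution. Writing $\alpha(x,t,u,Du)=\alpha(x,t,u,du)+\tilde\beta_0\,\partial_{\mathbf{n}}u$ with $0\le\tilde\beta_0\le\eta(M)\beta_0$ from \eqref{V}, and repeating the estimate with \eqref{V4} and $\alpha_z\ge\theta_0\Theta$, the normal derivative becomes the good term $\sigma\tilde\beta_0\,\partial_{\mathbf{n}}v$ (nonnegative coefficient), the tangential quadratic term is again absorbed, and one obtains on $\{u>M_0\}\cap\Gamma_T$ the linear Venttsel subsolution inequality $\partial_tv-\alpha^{ij}d_id_jv-\mathbf{b}^{*}\cdot dv+\sigma\tilde\beta_0\,\partial_{\mathbf{n}}v\le0$ with $|\mathbf{b}^{*}|\le\eta(z_0)\beta\in L^{n}(\Gamma_T)$. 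I then apply the global maximum estimate to the truncation $V:=(v-\Psi(M_0))_+$: it is a strong subsolution of a linear Venttsel problem on all of $\mathcal{Q}_T$ (by a Kato-type truncation for $W^{2,1}$ functions), its drift coefficients lie in the spaces required by Proposition~\ref{global-max-principle}, the normal coefficient $\sigma\tilde\beta_0$ is nonnegative, the zeroth-order coefficients vanish, and $V(\cdot,0)=0$ since $u(\cdot,0)=0<M_0$. As $V$ is supported in $\{u>M_0\}$, both forcing terms $f_+$ and $g_+$ vanish, so Proposition~\ref{global-max-principle} gives $\sup_{\mathcal{Q}_T}V\le C_2(0+0+0)=0$, whence $v\le\Psi(M_0)$ and therefore $u\le M_0$.

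I expect the main obstacle to be twofold. First, the single substitution $\Psi$ must be tuned to absorb simultaneously the interior quadratic term $\mu|Du|^2$ and the tangential one $\mu|du|^2$ while converting the normal-derivative contribution into $\sigma\tilde\beta_0\,\partial_{\mathbf{n}}v$ with the correct sign $\tilde\beta_0\ge0$; this is what lets the interior inequality and the boundary inequality be coupled consistently. Second, and more importantly, obtaining the \emph{sharp} constant $M_0=\theta_0^{-1}\eta(z_0)+z_0$ rather than one inflated by the factor $C_2$ hinges on truncating exactly at the level $M_0$, so that both the forcing and the initial datum of the linear problem solved by $V$ are identically zero; verifying that $V=(v-\Psi(M_0))_+$ is an admissible strong subsolution (the Kato inequality for $W^{2,1}$ solutions with the Venttsel normal term) is the technical crux.
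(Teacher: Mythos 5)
Your proof of part 2 follows the paper's skeleton at the level of structure: reduce to the one-sided bound $u\le M_0$, use the fundamental theorem of calculus in $z$ together with $a_z\ge\theta_0\Phi$, $\alpha_z\ge\theta_0\Theta$ to bound $a$ and $\alpha$ from below on $\{u>z_0\}$, split $\alpha(x,t,u,Du)=\alpha(x,t,u,du)+\tilde\beta_0\partial_{\mathbf{n}}u$ with $\tilde\beta_0\ge0$ coming from \eqref{V}, and close with Proposition~\ref{global-max-principle}, exploiting that the forcing is nonpositive above the level $M_0$ so that the constant $C_2$ multiplies zero and the sharp value $M_0=\theta_0^{-1}\eta(z_0)+z_0$ survives. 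Where you genuinely diverge is the treatment of the quadratic gradient terms. You remove them by a Cole--Hopf substitution; the paper instead rewrites them as a \emph{drift}: it puts
$$
b^i:=-D_iu\Big(\mu+\frac{b}{|Du|}\Big),\qquad \beta^{*i}:=-d_iu\Big(\mu+\frac{\beta}{|du|}\Big),
$$
so that $\eta(z_0)\big(\mu|Du|^2+b|Du|\big)=-\eta(z_0)b^iD_iu$, and observes that $b^i\in L^{n+1}(\mathcal{Q}_T)$, $\beta^{*i}\in L^{n}(\Gamma_T)$ simply because $u\in V_{n+1,n}(\mathcal{Q}_T)$ — exactly the integrability Proposition~\ref{global-max-principle} demands. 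The circularity you worried about (Theorem~\ref{Du-est} not being available) is thus resolved without any substitution: the drifts depend on the unknown solution, but this is harmless since they only affect $C_2$, which is irrelevant here. This yields the inequalities \eqref{m1}--\eqref{m2} for $u-M_0$ itself, a bona fide $V_{n+1,n}(\mathcal{Q}_T)$ function. (Amusingly, the exponential trick you use is exactly how the paper proves part~1, applying Proposition~\ref{global-max-principle} to $\exp(\pm\mu\nu^{-1}u)-1$; for part~2 it deliberately avoids it.)

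This divergence matters at your last step, which contains the one real gap. You apply Proposition~\ref{global-max-principle} to $V=(v-\Psi(M_0))_+$, but $V$ does not belong to $V_{n+1,n}(\mathcal{Q}_T)$: the positive part of a $W^{2,1}$ function has no second derivatives as functions across the set $\{v=\Psi(M_0)\}$, and the Proposition is formulated for strong solutions. The ``Kato-type truncation'' you invoke to legitimize $V$ is precisely the missing ingredient: a Kato inequality compatible with the Venttsel boundary operator, in particular with the normal term $\tilde\beta_0\partial_{\mathbf{n}}v$, is not an off-the-shelf result, and you acknowledge rather than supply it. The paper never faces this problem: its comparison function $u-M_0$ lies in $V_{n+1,n}(\mathcal{Q}_T)$, satisfies $\mathcal{L}(u-M_0)\le0$ a.e.\ in $\mathcal{Q}_T\cap\{u>z_0\}$, $\mathcal{B}(u-M_0)\le0$ a.e.\ on $\Gamma_T\cap\{u>z_0\}$, and equals at most $z_0-M_0<0$ off that set, so the maximum principle applies to it directly (its level-set type proof only uses the inequalities where $u-M_0>0$). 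Your argument can be repaired the same way — drop the truncation and run the comparison for $v-\Psi(M_0)$ on $\{u>z_0\}$, where it is negative on the relative boundary — but once this is done the exponential change of variables buys nothing over the paper's drift rewriting.
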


\begin{proof}
{\bf 1.} It suffices to apply Proposition~\ref{global-max-principle} to  $v^{\pm}=\exp(\pm\mu\nu^{-1}u)-1$. 
\medskip

{\bf 2.} We will show the estimate $u\le M_0$, while the proof of $u\ge -M_0$ is similar.

Suppose that the set $\mathcal{Q}^+=\{u>z_0\}$ is non-empty. Then, using \eqref{A4} and the lower bound for $a_z$, we can write a.e. in $\mathcal{Q}_T\cap \mathcal{Q}^+$ that
\begin{align*}
&\ a(x,t,u,Du) =a(x,t,z_0,Du) \\
&\ +\int\limits_0^1 a_z\big(x,t,u(x,t)+(1-s)z_0,Du(x,t)\big)(u-z_0)\,ds \\
&\ \ge -\eta(z_0)\Big(\mu |Du(x,t)|^2+b(x,t)|Du(x,t)|+\Phi(x,t)\Big)+\theta_0 \Phi(x,t)(u-z_0).
\end{align*}
Therefore, \eqref{VP1-sigma} yields
\begin{align}
\label{m1}
\mathcal{L}u(x,t) :=&\ \partial_tu -\tilde{a}^{ij}(x,t)D_iD_ju +\sigma\eta(z_0) b^i(x,t)D_iu +\sigma\theta_0\Phi(x,t)u\\
\nonumber
\le&\ \big(\eta(z_0)+\theta_0z_0\big)\,\sigma\Phi(x,t)
= \mathcal{L}\big(\theta_0^{-1}\eta(z_0)+z_0\big),
\end{align}
where 
\begin{align*}
\tilde{a}^{ij}(x,t)&:={a}^{ij}\big(x,t,u(x,t)\big),\\ b^i(x,t)&:=-D_iu(x,t)\Big(\mu+\dfrac{b(x,t)}{|Du(x,t)|}\Big)\in L^{n+1}(\mathcal{Q}_T).
\end{align*}

Further on, we write $\alpha(x,t,u,Du)=\alpha(x,t,u,du)+\tilde\beta_0(x,t)\partial_{\mathbf{n}}u$ as in the proof of Theorem~\ref{Du-est} with $\tilde\beta_0(x,t)$ given by \eqref{beta0u}. Using \eqref{V4} and the lower bound for $\alpha_z$, we obtain a.e. on $\Gamma_T\cap\mathcal{Q}^+$ that
\begin{align*}
\alpha(x,t,u,Du)& =\alpha(x,t,z_0,du)\\
+&\ \int\limits_0^1 \alpha_z\big(x,t,u(x,t)+(1-s)z_0,du(x,t)\big)(u-z_0)\,ds +\tilde\beta_0(x,t)\partial_{\mathbf{n}}u \\
\ge&\  -\eta(z_0)\Big(\mu |du(x,t)|^2+\beta(x,t)|du(x,t)|+\Theta(x,t)\Big)\\
&\ +\theta_0 \Theta(x,t)(u-z_0)+\tilde\beta_0(x,t)\partial_{\mathbf{n}}u.
\end{align*}
This way, \eqref{VP2-sigma} implies
\begin{align}
\label{m2}
\mathcal{B}u(x,t)&\  :=\partial_tu-\tilde{\alpha}^{ij}(x,t)d_id_ju+\sigma\eta(z_0) \beta^{*i}(x,t)d_iu+\sigma \tilde\beta_0(x,t)\partial_{\mathbf{n}}u\\
\nonumber
&\  +\sigma\theta_0\Theta(x,t)u
\le \big(\eta(z_0)+\theta_0z_0\big)\,\sigma\Theta(x,t)\le \mathcal{B}\big(\theta_0^{-1}\eta(z_0)+z_0\big)
\end{align}
with 
\begin{align*}
\tilde{\alpha}^{ij}(x,t)&:={\alpha}^{ij}\big(x,t,u(x,t)\big),\\
\beta^{*i}(x,t)&:=-d_iu(x,t)\Big(\mu+\dfrac{\beta(x,t)}{|du(x,t)|}\Big)\in L^{n}(\Gamma_T).
\end{align*}
It is to be noted  that $\tilde{\beta}_0\ge0$ as it follows from \eqref{V}, while $\sigma\theta_0\Theta(x,t)\ge0$.

Setting $M_0:=\theta_0^{-1}\eta(z_0)+z_0$, we have from \eqref{m1} and \eqref{m2} that
$$
\CL(u-M_0)\le 0\quad \text{a.e. in}\ \mathcal{Q}_T\cap\mathcal{Q}^+,\qquad
\CB(u- M_0)\le 0\quad \text{a.e. on}\ \Gamma_T\cap\mathcal{Q}^+.
$$
Proposition~\ref{global-max-principle} provides $\sup_{\mathcal{Q}_T}\,(u-M_0)\le0$, that completes the proof.
\end{proof}


\section{Open problems}

A very interesting and important from the point of view of possible applications question is to extend the linear theory presented in Section~\ref{sec3} to the framework of \textit{anisotropic} Sobolev spaces where the exponents of Lebesgue integrability in $t$ and in $x$ are generally different. The PDEs background in such spaces is already available in \cite{Kr07b} and what is necessary to get is a fine extension result in the spirit of Theorem~\ref{extension-theorem} in the framework of anisotropic Sobolev spaces with precise control on the corresponding exponents.

 It would be interesting also to study linear and quasilinear parabolic two-phase Venttsel problems where the Venttsel condition is given on an interface separating $\Omega$ into two parts. For the case of continuous principal coefficients, this problem was investigated in
\cite{AN01}, \cite{AN00a}, and \cite{N04}. If the interface does not intersect the exterior boundary of $\Omega$ then our results can be transferred to the two-phase case more or less directly. However, if the interface meets $\partial\Omega$ transversally then both parts of $\Omega$ are domains with smooth closed edges. 
This requires to study the problem in weighted
Sobolev spaces where the weight is a power of the distance from a point to the edge (see  \cite{N01} and \cite{N04} for more details). 

The principal coefficients of the quasilinear operators considered in Section~\ref{sec4} depend on $(x,t)$ and the solution $u$, but these are independent of the gradient $Du$. An engaging open problem could be to prove strong solvability for  \textit{general quasilinear} Venttsel problems with principal parts depending also on the spatial and tangential gradients $Du$ and $du$, respectively. The approach to this task can be also based on the Leray--Schauder theorem employed in Section~\ref{sec4} but the \textit{a~priori} estimates needed are much more involved in this situation.

\subsection*{Funding}
The research of D.A. and A.N. (Theorems \ref{extension-theorem}, \ref{apriori-estimate}, \ref{quasilinear-existence}, \ref{M0}) is supported by the Russian Science Foundation grant 22-21-00027 fulfilled at St. Petersburg Department of the Steklov Mathematical Institute.

D.P. and L.S. are members of INdAM-GNAMPA. The  research of L.S. is partially supported by the INdAM-GNAMPA project ``Regularity of the solutions of boundary problems for elliptic and parabolic operators''. The work of D.P. 
was supported by the Italian Ministry of Education, University and Research
under the Programme ``Department of Excellence'' L. 232/2016 (Grant No. CUP -
D94I18000260001).

\bibliography{Bibliography-Venttsel-par}

\newcommand{\etalchar}[1]{$^{#1}$}
\begin{thebibliography}{CFG{\etalchar{+}}09}

\bibitem[AC78]{AC78}
H.~Amann and M.G. Crandall.
\newblock On some existence theorems for semi-linear elliptic equations.
\newblock {\em Indiana Univ. Math. J.}, 27(5):779--790, 1978.

\bibitem[AN95a]{AN95a}
D.E. Apushkinskaya and A.I. Nazarov.
\newblock H\"{o}lder estimates of solutions to initial-boundary value problems
  for parabolic equations of nondivergent form with {W}entzel boundary
  condition.
\newblock In {\em Nonlinear evolution equations}, volume 164 of {\em Amer.
  Math. Soc. Transl. Ser. 2}, pages 1--13. Amer. Math. Soc., Providence, RI,
  1995.

\bibitem[AN95b]{AN95}
D.E. Apushkinskaya and A.I. Nazarov.
\newblock An initial-boundary value problem with a {V}enttsel' boundary
  condition for parabolic equations not in divergence form.
\newblock {\em St. Petersburg Math. J.}, 6(6):1127--1149, 1995.

\bibitem[AN96]{AN96}
D.E. Apushkinskaya and A.I. Nazarov.
\newblock The nonstationary {V}enttsel' problem with quadratic growth with
  respect to the gradient.
\newblock {\em J. Math. Sci.}, 80(6):2197--2207, 1996.
\newblock Nonlinear boundary-value problems and some questions of function
  theory.

\bibitem[AN00a]{AN00a}
D.~E. Apushkinskaya and A.~I. Nazarov.
\newblock Quasilinear two-phase {V}enttsel' problems.
\newblock {\em Zap. Nauchn. Sem. S.-Peterburg. Otdel. Mat. Inst. Steklov.
  (POMI)}, 271(Kraev. Zadachi Mat. Fiz. i Smezh. Vopr. Teor. Funkts.
  31):11--38, 2000.
\newblock [Russian]; English transl.: J. Math. Sci. (N.Y.), 115 (2003), no. 6,
  pp. 2704--2719.

\bibitem[AN00b]{AN00}
D.E. Apushkinskaya and A.I. Nazarov.
\newblock A survey of results on nonlinear {V}enttsel problems.
\newblock {\em Appl. Math.}, 45(1):69--80, 2000.

\bibitem[AN01]{AN01}
D.~E. Apushkinskaya and A.~I. Nazarov.
\newblock Linear two-phase {V}enttsel problems.
\newblock {\em Ark. Mat.}, 39(2):201--222, 2001.

\bibitem[ANPS21]{ANPS21}
D.E. Apushkinskaya, A.I. Nazarov, D.K. Palagachev, and L.G. Softova.
\newblock Venttsel boundary value problems with discontinuous data.
\newblock {\em SIAM J. Math. Anal.}, 53(1):221--252, 2021.

\bibitem[BC93]{BC93}
M.~Bramanti and M.C. Cerutti.
\newblock {$W_p^{1,2}$} solvability for the {C}auchy-{D}irichlet problem for
  parabolic equations with {VMO} coefficients.
\newblock {\em Comm. Partial Differential Equations}, 18(9-10):1735--1763,
  1993.

\bibitem[BIN75]{BIN75}
O.V. Besov, V.P. Il'in, and S.M. Nikol'ski\u\i.
\newblock {\em Integral Representations of Functions, and Embedding Theorems}.
\newblock Nauka, Moscow, 1975.
\newblock [Russian]; English transl.: V.H. Winston and Sons, Washington, DC;
  Halsted press, New York-Toronto, Ont.-London, I, 1978, II, 1979.

\bibitem[CFG{\etalchar{+}}09]{CFGGGOR09}
G.M. Coclite, A.~Favini, C.G. Gal, G.R. Goldstein, J.A. Goldstein, E.~Obrecht,
  and S.~Romanelli.
\newblock The role of {W}entzell boundary conditions in linear and nonlinear
  analysis.
\newblock In {\em Advances in nonlinear analysis: theory methods and
  applications}, volume~3 of {\em Math. Probl. Eng. Aerosp. Sci.}, pages
  277--289. Camb. Sci. Publ., Cambridge, 2009.

\bibitem[CFL93]{CFL93}
F.~Chiarenza, M.~Frasca, and P.~Longo.
\newblock {$W^{2,p}$}-solvability of the {D}irichlet problem for nondivergence
  elliptic equations with {VMO} coefficients.
\newblock {\em Trans. Amer. Math. Soc.}, 336(2):841--853, 1993.

\bibitem[DK11]{DK11}
H.~Dong and D.~Kim.
\newblock On the {$L_p$}-solvability of higher order parabolic and elliptic
  systems with {BMO} coefficients.
\newblock {\em Arch. Ration. Mech. Anal.}, 199(3):889--941, 2011.

\bibitem[GT01]{GT01}
D.~Gilbarg and N.S. Trudinger.
\newblock {\em Elliptic partial differential equations of second order}.
\newblock Classics in Mathematics. Springer-Verlag, Berlin, 2001.
\newblock Reprint of the 1998 edition.

\bibitem[JN61]{JN61}
F.~John and L.~Nirenberg.
\newblock On functions of bounded mean oscillation.
\newblock {\em Comm. Pure Appl. Math.}, 14:415--426, 1961.

\bibitem[Kry07a]{Kr07a}
N.V. Krylov.
\newblock Parabolic and elliptic equations with {VMO} coefficients.
\newblock {\em Comm. Partial Differential Equations}, 32(1-3):453--475, 2007.

\bibitem[Kry07b]{Kr07b}
N.V. Krylov.
\newblock Parabolic equations with {VMO} coefficients in {S}obolev spaces with
  mixed norms.
\newblock {\em J. Funct. Anal.}, 250(2):521--558, 2007.

\bibitem[Kry08]{Kr08}
N.V. Krylov.
\newblock {\em Lectures on elliptic and parabolic equations in {S}obolev
  spaces}, volume~96 of {\em Graduate Studies in Mathematics}.
\newblock American Mathematical Society, Providence, RI, 2008.

\bibitem[Kry23]{Kr23}
N.~Krylov.
\newblock On parabolic {A}dams's, the {C}hiarenza-{F}rasca theorems, and some
  other results related to parabolic {M}orrey spaces.
\newblock {\em Math. Eng.}, 5(2):Paper No. 038, 20, 2023.

\bibitem[LU86]{LU86}
O.A. Ladyzhenskaya and N.N. Uraltseva.
\newblock A survey of results on the solvability of boundary value problems for
  uniformly elliptic and parabolic second-order quasilinear equations having
  unbounded singularities.
\newblock {\em Uspekhi Mat. Nauk}, 41(5(251)):59--83, 1986.
\newblock [Russian]; English transl.: Russian Math. Surveys, 41 (1986), no. 5,
  pp.~1--31.

\bibitem[MPS00]{MPS00}
A.~Maugeri, D.K. Palagachev, and L.G. Softova.
\newblock {\em Elliptic and parabolic equations with discontinuous
  coefficients}, volume 109 of {\em Mathematical Research}.
\newblock Wiley-VCH Verlag Berlin GmbH, Berlin, 2000.

\bibitem[Naz01]{N01}
A.~I. Nazarov.
\newblock Dirichlet problem for quasilinear parabolic equations in domains with
  smooth closed edges.
\newblock In {\em Proc. St.~Petersburg Math. Soc., vol. IX}, pages 120--147.
  Nauchnaya kniga, 2001.
\newblock [Russian]; English transl.: volume 209 of \textit{Amer. Math. Soc.
  Transl. Ser. 2}, pages 115--141. Amer. Math. Soc., Providence, RI, 2003.

\bibitem[Naz04]{N04}
A.I. Nazarov.
\newblock On the nonstationary two-phase {Venttsel} problem in the transversal
  case.
\newblock {\em J. Math. Sci.}, 122(3):3251--3264, 2004.

\bibitem[Ras86]{R86}
R.E. Raspe.
\newblock {\em {{B}aron {M}unchausen's Narrative of his Marvellous Travels and
  Campaigns in {R}ussia}}.
\newblock Oxford, 1786.

\bibitem[Sar75]{Sar75}
D.~Sarason.
\newblock Functions of vanishing mean oscillation.
\newblock {\em Trans. Amer. Math. Soc.}, 207:391--405, 1975.

\bibitem[Sof03]{Sof03}
L.G. Softova.
\newblock Quasilinear parabolic operators with discontinuous ingredients.
\newblock {\em Nonlinear Anal.}, 52(4):1079--1093, 2003.

\bibitem[Sol72]{Sol72}
V.A. Solonnikov.
\newblock Inequalities for functions of the classes
  ${W}^{\overrightarrow{m}}_p(\mathbb{R}^n)$.
\newblock {\em Zap. Nauchn. Semin. Leningr. Otd. Mat. Inst. Steklova},
  27:194--210, 1972.
\newblock [Russian]; English transl.: J. Soviet Math., 3 (1975), pp.~549--564.

\bibitem[Ven59]{V59}
A.D. Venttsel.
\newblock On boundary conditions for multidimensional diffusion processes.
\newblock {\em Teor. Veroyatnost. i Primenen.}, 4:172--185, 1959.
\newblock [Russian]; English transl.: Theor. Probability Appl., 4 (1960), pp.
  164--177.

\end{thebibliography}

\end{document}